\newtheorem{lemma}{Lemma}
\numberwithin{lemma}{section}
\newtheorem{theorem}[lemma]{Theorem}
\newtheorem*{theorem*}{Theorem}
\newtheorem*{corollary*}{Corollary}
\newtheorem{proposition}[lemma]{Proposition}
\newtheorem{corollary}[lemma]{Corollary}
\newtheorem*{conjecture}{Conjecture}
\newtheorem{Th}{Theorem}
\theoremstyle{definition}
\newtheorem{definition}[lemma]{Definition}
\newtheorem{example}[lemma]{Example}
\newtheorem{remark}[lemma]{Remark}
\newtheorem*{remark*}{Remark}
\newcommand{\A}{\mathcal{A}}
\newcommand{\F}{\mathbb{F}}
\newcommand{\B}{\mathcal{B}}
\newcommand{\calF}{\mathcal{F}}
\renewcommand{\S}{\mathcal{S}}
\newcommand{\Z}{\mathbb{Z}}
\newcommand{\SC}{\mathit{SC}}
\newcommand{\SP}{\mathit{SP}}
\newcommand{\GL}{\mathit{GL}}
\newcommand{\SL}{\mathit{SL}}
\newcommand{\Aut}{\mathrm{Aut}}
\DeclareMathOperator{\colim}{colim}
\DeclareMathOperator{\rank}{rank}
\DeclareMathOperator{\tor}{Tor}
\renewcommand{\diamond}{\oplus}
\title{On the edge of the stable range}
\author{Richard Hepworth}
\address{Institute of Mathematics\\
University of Aberdeen\\
Aberdeen AB24 3UE\\
United Kingdom}
\email{r.hepworth@abdn.ac.uk}
\subjclass[2010]{20J06 (primary), 20F28, 57M07, 55R40 (secondary)}
\keywords{Homological stability, general linear groups, automorphism
groups of free groups}
\begin{document}

\begin{abstract}
We prove a general homological stability theorem
for certain families of groups equipped with product maps,
followed by two theorems of a new kind that give
information about the last two homology groups outside the stable range.
(These last two unstable groups are the `edge' in our title.)
Applying our results to automorphism groups of free groups yields 
a new proof of homological stability with an improved stable range, 
a description of the last unstable group up to a single ambiguity,
and a lower bound on the rank of the penultimate unstable group.
We give similar applications to the general linear groups of the integers
and of the field of order 2, this time recovering the known stablility range.
The results can also be applied to general linear groups of
arbitrary principal ideal domains, symmetric groups, and braid groups.
Our methods require us to use field coefficients throughout.
\end{abstract}

\maketitle

\section{Introduction}

A sequence of groups and inclusions
$
	G_1\hookrightarrow G_1\hookrightarrow G_3\hookrightarrow\cdots
$
is said to satisfy \emph{homological stability}
if in each degree $d$ there is an integer $n_d$ such
that the induced map $H_d(G_{n-1})\to H_d(G_n)$
is an isomorphism for $n> n_d$.
Homological stability is known to hold for many families of groups,
including symmetric groups~\cite{Nakaoka},
general linear groups~\cite{Quillen, Charney, vanderKallen},
mapping class groups of surfaces and 
3-manifolds~\cite{Harer, RandalWilliamsMCG, WahlMCG, HatcherWahl},
diffeomorphism groups of highly connected 
manifolds~\cite{GalatiusRandalWilliams}, and automorphism groups of 
free groups~\cite{HatcherVogtmannStability,HatcherVogtmannRational}.
Homological stability statements often also specify
that the last map outside the range $n> n_d$ is a surjection,
so that the situation can be pictured as follows.
\[
	\cdots
	\to
	H_d(G_{n_d-3})
	\to
	\underbrace{
	H_d(G_{n_d-2})
	\to
	H_d(G_{n_d-1})
	}_{\text{edge of the stable range}}
	\twoheadrightarrow
	\underbrace{
	{H_d(G_{n_d})}
	\xrightarrow{\cong}
	{H_d(G_{n_d+1})}
	\xrightarrow{\cong}
	\cdots}_{\text{stable range}}
\]

The groups $H_d(G_{n_d}), H_d(G_{n_d+1}),\ldots$, which are all isomorphic,
are said to form the \emph{stable range}.
This paper studies what happens at
\emph{the edge of the stable range}, by which we mean the
last two unstable groups $H_d(G_{n_d-2})$ and $H_d(G_{n_d-1})$.
We prove a new and rather general homological stability result
that gives exactly the picture above with $n_d=2d+1$.
Then we prove two theorems of an entirely new kind.
The first describes the {kernel} of the surjection 
$H_d(G_{n_d-1})\twoheadrightarrow H_d(G_{n_d})$,
and the second explains how to make the map
$H_d(G_{n_d-2})\to H_d(G_{n_d-1})$ into a {surjection}
by adding a new summand to its domain.
These general results hold for homology with coefficients in
an arbitrary field.

We apply our general results to
general linear groups of principal ideal domains (PIDs)
and automorphism groups of free groups.
In both cases 
we obtain new proofs of homological stability,
recovering the known stable range for the general linear groups,
and improving upon the known stable range for $\Aut(F_n)$.
We also obtain new information on the last two unstable homology
groups for $\Aut(F_n)$, $\GL_n(\Z)$ and $\GL_n(\F_2)$,
in each case identifying the last unstable group up to a single
ambiguity.

Our proofs follow an overall pattern that is familiar in homological stability.
We define a sequence of complexes acted on by
the groups in our family, and we assume that they satisfy
a connectivity condition.
Then we use an algebraic argument, based on spectral sequences
obtained from the actions on the complexes, to deduce the result.
The connectivity condition has to be verified separately for
each example, but it turns out that in our examples 
the proof is already in the literature, or can be deduced from it.
The real novelty in our paper is the algebraic argument.
To the best of our knowledge it has not been used before,
either in the present generality or in any specific instances.
Even in the case of general linear groups of PIDs,
where our complexes are exactly the ones used by Charney in the original
proof of homological stability~\cite{Charney} for Dedekind domains,
we are able to improve the stable range obtained, matching the best known.

\subsection{General results}
Let us state our main results, after first establishing
some necessary terminology.
From this point onwards homology is to be taken with coefficients
in an arbitrary field $\F$, unless stated otherwise.

A \emph{family of groups with multiplication}
$(G_p)_{p\geqslant 0}$ consists of a sequence of groups
$G_0,G_1,G_2,\ldots$  equipped with product maps $G_p\times G_q\to G_{p+q}$
for $p,q\geqslant 0$, subject to some simple axioms.
See section~\ref{families-section} for the precise definition.
The axioms imply in particular that 
$\bigoplus_{p\geqslant 0}H_\ast(G_p)$ is a graded commutative ring.
Examples include the symmetric groups, braid groups,
the general linear groups of a PID,
and automorphism groups of free groups.

To each family of groups with multiplication $(G_p)_{p\geqslant 0}$
we associate the \emph{splitting posets} $\SP_n$ for $n\geqslant 2$.  
If we think of $G_n$ as the group of symmetries of an `object of size $n$',
then an element of $\SP_n$ is a splitting of that object
into two ordered nontrivial pieces.
See section~\ref{section-spn} for the precise definition.
The \emph{stabilisation map}
$
	s_\ast\colon H_\ast(G_{n-1})\to H_\ast(G_n)
$
is the map induced by the homomorphism
$G_{n-1}\to G_n$ that takes the product on the left with the neutral element
of $G_1$.  Our first main result is the following 
homological stability theorem.

\begin{Th}\label{theorem-stability}
Let $(G_p)_{p\geqslant 0}$ be a family of groups with 
multiplication, and assume that $|\SP_n|$ is $(n-3)$-connected
for all $n\geqslant 2$.  Then the stabilisation map
\[
	s_\ast\colon H_\ast(G_{n-1})\longrightarrow H_\ast(G_n)
\]
is an isomorphism for $\ast\leqslant\frac{n-2}{2}$ and a
surjection for $\ast\leqslant\frac{n-1}{2}$.
Here homology is taken with coefficients in an
arbitrary field.
\end{Th}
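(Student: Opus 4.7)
The plan is an induction on $n$, using the spectral sequence associated to the action of $G_n$ on $|\SP_n|$. By the connectivity hypothesis, the augmented chain complex $\widetilde{C}_*(|\SP_n|;\F)\to\F$ is acyclic in degrees $\leq n-3$, so splicing it with a free $\F[G_n]$-resolution of $\F$ yields a spectral sequence whose $E^1$-page is
\[
E^1_{p,q} = \bigoplus_{\sigma} H_q\bigl(\mathrm{Stab}(\sigma);\F_\sigma\bigr),
\]
with $\sigma$ running over $G_n$-orbits of $p$-simplices of $|\SP_n|$ and $\F_\sigma$ the orientation module. A separate augmentation column records $H_*(G_n)$, and the spectral sequence collapses to zero in total degrees up to roughly $n-2$.

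The next step is to identify the stabilisers. A $p$-simplex of $\SP_n$ should correspond to an ordered splitting of the size-$n$ object into $p+2$ nontrivial pieces of sizes $(n_0,\ldots,n_{p+1})$, with stabiliser $G_{n_0}\times\cdots\times G_{n_{p+1}}$. Over the field $\F$ the K\"unneth theorem writes each $E^1_{p,q}$ as a sum of tensor products of smaller $H_*(G_{n_i})$. The $p=0$ column consists of terms $H_*(G_a)\otimes H_*(G_{n-a})$ for $1\leq a\leq n-1$, and the $d^1$ from this column into the augmentation column is the sum of ring products coming from the multiplications $G_a\times G_{n-a}\to G_n$; the $a=1$ component is the stabilisation map $s_*$ itself.

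The inductive hypothesis gives the claimed stable range for every $G_{n_i}$ with $n_i<n$, which kills the higher $E^1$-columns of the spectral sequence in the appropriate range and, using the graded commutativity of the ring $\bigoplus_p H_*(G_p)$, lets one rewrite products of stable classes as iterated stabilisations. Convergence to zero then forces $s_*$ to be surjective for $*\leq (n-1)/2$ and injective (hence an isomorphism) for $*\leq (n-2)/2$, closing the induction.

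The hardest part will be the range bookkeeping. The connectivity bound $(n-3)$ leaves essentially no slack, so I will need to verify that the inductive hypothesis wipes out every higher column $E^1_{p,q}$ with $p\geq 1$ in the required total degree, and that the $d^1$ out of the $p=0$ column hits everything in $H_*(G_n)$ that does not already come from $s_*$. This is the step where a small miscalibration of the stable range would propagate through the induction and degrade the final conclusion, so choosing the exact inductive statement and tracking the contribution of every tensor factor $H_{q_i}(G_{n_i})$ is where the real work lies.
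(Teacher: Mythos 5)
Your setup coincides with the paper's: the equivariant spectral sequence for the $G_n$-action on the splitting complex (the paper builds it as a semisimplicial space of classifying spaces $t\B_n$, levelwise equivalent to the Borel construction on $\SC_n^+$), whose $E^1$-page after K\"unneth is the bar-type complex $\B_n$ with $(\B_n)_b=\bigoplus A_{q_0}\otimes\cdots\otimes A_{q_b}$ for $A=\bigoplus_p H_\ast(G_p)$, converging to zero in total degrees $\leqslant n-2$. (The orientation modules you mention are trivial here, since the complex is semisimplicial and stabilisers fix simplices.) The gap is in the engine of the induction. The higher columns are sums of tensor products $H_\ast(G_{q_0})\otimes\cdots\otimes H_\ast(G_{q_{p+1}})$ over compositions of $n$; homological stability for the smaller groups does not make these vanish, at $E^1$ or at $E^2$. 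What must vanish in the relevant range is the $d^1$-homology of these columns, i.e.\ the acyclicity of $\B_n$ in positive homological degrees, which by the bar description is a $\tor^A(\F,\F)$-vanishing statement about the ring structure of $A$ (decomposability of classes and generation of the relations among products). This is strictly more than the stability isomorphisms you are inducting on, so the sentence ``the inductive hypothesis \dots kills the higher $E^1$-columns'' is exactly where the argument breaks; and since the connectivity bound $(n-3)$ leaves no slack, there is no room to absorb these nonvanishing columns by crude estimates.

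The paper repairs this by strengthening the inductive statement: Theorem~\ref{stability-inductive} asserts that both the two-term complexes $\S_m=(H_\ast(G_m)\leftarrow H_\ast(G_{m-1}))$ \emph{and} the bar complexes $\B_m$ are acyclic in the range $b\leqslant m-2d-1$, and proves this by strong induction using the filtration $F_0\subseteq\cdots\subseteq F_{n-1}=\B_n$ of Definition~\ref{filtration-definition}, whose quotients are $F_0\cong\S_n$ and $F_r/F_{r-1}\cong\Sigma_b[\S_{n-r}\otimes\B_r]$ (Proposition~\ref{quotient-isomorphism}). The inductive acyclicity of the smaller $\S_{n-r}$ \emph{and} $\B_r$ shows via K\"unneth that $F_1\hookrightarrow\B_n$ is a homology isomorphism in the range, a separate non-formal argument (the square subcomplex $\S q_n$) handles $F_0\hookrightarrow F_1$, and only then does the vanishing of $E^\infty$ in total degrees $\leqslant n-2$ convert acyclicity of $\B_n$ into acyclicity of $\S_n$ in homological degrees $0,1$, which is the stated stability. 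To make your plan work you would need to add the acyclicity of the $\B_m$, $m<n$, to your induction hypothesis and supply this filtration comparison (or an equivalent control of the $E^2$-page); with stability alone as the hypothesis the deferred ``bookkeeping'' cannot be completed.
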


Theorem~\ref{theorem-stability} overlaps with work in progress of
S\o ren Galatius, Alexander Kupers and Oscar Randal-Williams.
Indeed, if we were to add the assumption that $\bigsqcup_{p\geqslant 0} G_p$
is a braided monoidal groupoid, then it
would follow from the work of Galatius, Kupers and Randal-Williams.
(The definition of family of groups with multiplication ensures that
$\bigsqcup_{p\geqslant 0}G_p$ is a monoidal groupoid;
the braiding assumption holds in all of our examples.)
We will mention other points of overlap as they occur.

In a given degree $m$, Theorem~\ref{theorem-stability}
gives us the surjection and isomorphisms
in the following sequence.
\[
	\cdots
	\to
	H_m(G_{2m-2})
	\to
	\underbrace{
	H_m(G_{2m-1})
	\to
	H_m(G_{2m})
	}_{\text{edge of the stable range}}
	\twoheadrightarrow
	\underbrace{
	{H_m(G_{2m+1})}
	\xrightarrow{\cong}
	{H_m(G_{2m+2})}
	\xrightarrow{\cong}
	\cdots}_{\text{stable range}}
\]
Our next two theorems extend
into the {edge of the stable range}.

\begin{Th}\label{theorem-kernel}
Let $(G_p)_{p\geqslant 0}$ be a family of groups with 
multiplication, and assume that $|\SP_n|$ is $(n-3)$-connected
for all $n\geqslant 2$.  Then the kernel of the map
\[
	s_\ast\colon H_m(G_{2m})\twoheadrightarrow H_m(G_{2m+1})
\]
is the image of the product map
\[
	H_{1}(G_{2})^{\otimes {m-1}}
	\otimes\ker[H_1(G_2)\xrightarrow{s_\ast} H_1(G_3)]
	\longrightarrow
	H_{m}(G_{2m}).
\]
Here homology is taken with coefficients in an
arbitrary field.
\end{Th}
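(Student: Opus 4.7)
The plan is to combine a formal argument for the easy inclusion with a careful analysis of the spectral sequence that already underlies the proof of Theorem~\ref{theorem-stability}.

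For the inclusion of the image in the kernel, I would simply invoke the ring structure. The axioms in section~\ref{families-section} are stated to make $\bigoplus_{p\geqslant 0} H_\ast(G_p)$ a graded commutative ring in which the stabilisation map $s_\ast$ is multiplication by the unit class $[1]\in H_0(G_1)$. Hence for $a_1,\ldots,a_{m-1}\in H_1(G_2)$ and $b\in\ker[H_1(G_2)\xrightarrow{s_\ast} H_1(G_3)]$, the product $a_1\cdots a_{m-1}\cdot b\in H_m(G_{2m})$ satisfies $s_\ast(a_1\cdots a_{m-1}\cdot b)=\pm a_1\cdots a_{m-1}\cdot s_\ast(b)=0$ by graded commutativity.

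For the reverse inclusion, my plan is to use the spectral sequence $E^r_{p,q}\Rightarrow H_{p+q}(G_n)$ associated to the action of $G_n$ on $|\SP_n|$. The $(n-3)$-connectivity hypothesis makes this spectral sequence converge in the range that matters; because the stabiliser of a splitting is a product of the form $G_a\times G_b$ with $a+b=n$ (or iterated products for higher $p$), Shapiro's lemma gives $E^1$-terms that are tensor products of the homology of smaller $G_\ast$'s, and the $d^1$ differential is an alternating sum of stabilisation/product maps. Theorem~\ref{theorem-stability} should fall out by a standard double induction on $n$ and $\ast$; my plan is to push this induction one step further to isolate the kernel of $s_\ast$ at $n=2m$.

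The argument would proceed by induction on $m$, the case $m=1$ being tautological. In the inductive step, any kernel element in $H_m(G_{2m})$ must be detected by a class sitting above the bottom row of the spectral sequence, and the first such row receives contributions from $1$-simplices of $|\SP_{2m}|$, that is, from splits $2m=a+b$ with $1\leqslant a\leqslant b$. The smallest such split, $2m=2+(2m-2)$, contributes $H_1(G_2)\otimes H_{m-1}(G_{2m-2})$, and the inductive hypothesis (together with Theorem~\ref{theorem-stability} in lower ranges) identifies the relevant piece of $H_{m-1}(G_{2m-2})$ as a product of classes in $H_1(G_2)$; the $d^1$ out of this term is precisely the product map whose image appears in the statement. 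The main obstacle is to show that all contributions from larger splits, where $\min(a,b)\geqslant 3$, either die on higher differentials or can be rewritten — using graded commutativity and the inductive hypothesis — as elements already in the image of $H_1(G_2)^{\otimes(m-1)}\otimes\ker s_\ast$. Getting this rewriting to work on the nose at the edge of the stable range, where there is no room to spare in the spectral sequence, is the delicate step and is where I expect the real work of the proof to lie.
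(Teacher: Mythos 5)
Your first paragraph is fine: the containment of the image in the kernel is indeed the easy half, and the ring-structure computation $s_\ast(a_1\cdots a_{m-1}b)=a_1\cdots a_{m-1}\cdot s_\ast(b)=0$ is essentially how one would dispose of it. The problem is the other half, which is the actual content of the theorem, and there your proposal is a plan rather than a proof — you say yourself that the delicate step ``is where I expect the real work of the proof to lie.'' Concretely, the gap is this: the spectral sequence of the action of $G_n$ on $|\SP_n|$ (equivalently, the augmented semisimplicial space with $E^1=\B_n$) never contains the stabilisation map $H_m(G_{2m})\to H_m(G_{2m+1})$ as a differential or an edge map, so it is not clear in your set-up where the kernel of $s_\ast$ lives as a computable term. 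You propose to run the sequence ``at $n=2m$'' and read off kernel elements from classes ``above the bottom row,'' but for $n=2m$ the group $G_{2m+1}$ does not appear at all, and for $n=2m+1$ the kernel is not an $E^1$- or $E^2$-entry of the bare sequence. In the paper this is exactly what the filtration $F_0\subseteq\cdots\subseteq F_{2m}=\B_{2m+1}$ of Definition~\ref{filtration-definition} is for: its bottom piece is the two-term complex $\S_{2m+1}=[H_\ast(G_{2m+1})\leftarrow H_\ast(G_{2m})]$, so that $\ker(s_\ast)$ appears as $H_{1,m}(F_0)$, and one then runs a \emph{second} spectral sequence, the one of this filtration in topological degree $m$, using the identification $F_r/F_{r-1}\cong\Sigma_b[\S_{2m+1-r}\otimes\B_r]$ of Proposition~\ref{quotient-isomorphism}.

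Even granting that device, the remaining work is not the routine bookkeeping your sketch suggests. The kernel term is hit not by a $d^1$ but by $d^2$ and $d^3$ differentials, and identifying these with the two product maps of Theorem~\ref{new-kernel-theorem} requires explicit chain-level lifts through the filtration (including a normal form for classes in $H_{1,1}(\B_3)$, Lemma~\ref{Bthreelemma}). Moreover, killing the ``larger splits'' is not done by commutativity tricks but by acyclicity statements half a degree beyond the stable range — $H_{1,m}(\B_{2m+1})=0$ and $H_{0,m}(\B_{2m})=0$ (Lemmas~\ref{twomplusoneacyclic-lemma} and~\ref{twomacyclic-lemma}) — which need their own argument and are precisely what make the whole computation close up with ``no room to spare.'' Finally, the statement you are asked to prove does not follow directly from the inductive kernel description alone: the paper first proves the recursive statement (Theorem~\ref{new-kernel-theorem}), then combines it with Theorem~\ref{theorem-realisation} and the vanishing of products of stable classes with kernel classes to reach the stated form. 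Your closing remarks about rewriting via graded commutativity and induction gesture at that last reduction, but the central mechanism — locating $\ker(s_\ast)$ inside a spectral sequence and proving the extra half-degree of acyclicity that forces the differentials onto it to be jointly surjective — is missing from the proposal.
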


\begin{Th}\label{theorem-realisation}
Let $(G_p)_{p\geqslant 0}$ be a family of groups with 
multiplication, and assume that $|\SP_n|$ is $(n-3)$-connected
for all $n\geqslant 2$.  Then the map 
\[
	H_m(G_{2m-1})\oplus H_1(G_2)^{\otimes m}
	\twoheadrightarrow H_m(G_{2m})
\]
is surjective.
Here homology is taken with coefficients in an
arbitrary field.
\end{Th}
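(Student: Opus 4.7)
I plan to prove the theorem by induction on $m$. The base case $m = 1$ is trivial since the map restricts to the identity on the summand $H_1(G_2)$. Suppose the statement is established for all $m' < m$, and let $m \geq 3$ (the case $m = 2$ will be handled separately). The spectral sequence I use arises from filtering the equivariant augmented chain complex $\tilde C_*(|\SP_{2m}|)$ by simplicial degree. Its $E^1$-page has $E^1_{-1,q} = H_q(G_{2m})$ and, for $p \geq 0$, decomposes by K\"unneth over a field into a direct sum of tensor products $H_{q_0}(G_{n_0}) \otimes \cdots \otimes H_{q_{p+1}}(G_{n_{p+1}})$ indexed by ordered $(p+2)$-part compositions $(n_0,\ldots,n_{p+1})$ of $2m$ and tuples $(q_i)$ with $\sum q_i = q$. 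The $d^1$ differentials identify $E^1$, together with its augmentation column, with a bar-type complex for the graded ring $R = \bigoplus_n H_*(G_n)$ in internal degree $2m$. The $(2m-3)$-connectivity of $|\SP_{2m}|$ forces the abutment to vanish in total degrees $\leq 2m - 3$, and in particular $E^\infty_{-1,m} = 0$.

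The core step is a case analysis showing that every element in the image of $d^1\colon E^1_{0,m} \to E^1_{-1,m}$, i.e.\ every decomposable class of $H_m(G_{2m})$, already lies in the subspace
\[
N := \mathrm{im}(s_*) + \mathrm{im}\bigl[H_1(G_2)^{\otimes m}\to H_m(G_{2m})\bigr].
\]
For a product $x \cdot y$ with $x \in H_i(G_a)$, $y \in H_{m-i}(G_b)$, $a + b = 2m$, the cases split by whether $(i,a)$ and $(m-i,b)$ lie in the stable range, at its edge, or below. Whenever one of the factors is stable, Theorem~\ref{theorem-stability} rewrites it as $s_*$ of a class in a smaller group, and the identity $s_*(x) \cdot y = s_*(x \cdot y)$, which follows from associativity and the definition of $s_*$ as multiplication by $1 \in H_0(G_1)$, places $xy$ into $\mathrm{im}(s_*)$. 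The only configuration not covered is $a = 2i$, $b = 2(m-i)$, where both factors lie at the main edge; there the inductive hypothesis decomposes $x$ and $y$ each as a stabilized class plus a sum of products of $H_1(G_2)$ elements, and expanding $xy$ bilinearly sends all cross terms into $\mathrm{im}(s_*)$ while the one pure term is a product of $m$ elements of $H_1(G_2)$.

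Since $N$ is clearly contained in the decomposables, this identifies $N$ with the full decomposable submodule at bidegree $(2m,m)$, so the theorem is equivalent to the vanishing of the indecomposable quotient $E^2_{-1,m}$. The main obstacle is this final step. Granted $E^\infty_{-1,m} = 0$, the quotient $E^2_{-1,m}$ is killed by the higher differentials $d^r\colon E^r_{r-1, m-r+1} \to E^r_{-1,m}$ for $r \geq 2$, but to conclude that $E^2_{-1,m} = 0$ one must exhibit chain-level lifts of these differential images that already lie in $N$. Unlike the explicit $d^1$ analysis, these encode secondary products in the bar complex, and their control requires combining the inductive hypothesis with Theorem~\ref{theorem-kernel} to track the kernel of stabilization. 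The case $m = 2$, for which the connectivity bound does not give $E^\infty_{-1,m} = 0$, is handled separately by using Theorem~\ref{theorem-kernel} together with the surjectivity of $s_*\colon H_2(G_4) \to H_2(G_5)$ provided by Theorem~\ref{theorem-stability}.
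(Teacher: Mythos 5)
Your case analysis of decomposables is sound and is essentially the paper's endgame: in the proof of Theorem~\ref{theorem-realisation} the paper likewise folds every product with a stable factor into the image of $s_\ast$ \emph{via} Theorem~\ref{theorem-stability}, observes that the only surviving summands have $(p,p')=(2p',p')$ and $(q,q')=(2q',q')$, and closes by induction on $m$. The genuine gap is the step you yourself flag as ``the main obstacle'': you need the degree-$m$ part of $H_m(G_{2m})$ to be decomposable, i.e.\ the vanishing of your $E^2_{-1,m}$ (in the paper's notation, $H_{0,m}(\B_{2m})=0$), but the connectivity hypothesis only gives $E^\infty_{-1,m}=0$. In the spectral sequence for $n=2m$ the incoming $d^2$ from bidegree $(2,m-1)$ cannot be excluded by the acyclicity range of Theorem~\ref{stability-inductive} (for $n=2m$, $d=m-1$ that range is only $b\leqslant 1$), so you are left having to control higher differentials at the chain level, which you do not do. Moreover your proposed fallback -- invoking Theorem~\ref{theorem-kernel} -- is circular in the paper's logical structure, since Theorem~\ref{theorem-kernel} is deduced there from Theorem~\ref{theorem-realisation} together with Theorem~\ref{new-kernel-theorem}; and your claim that $m=2$ escapes the connectivity bound is an indexing slip (with the augmentation in column $-1$ the bidegree $(-1,m)$ has total degree $m-1\leqslant 2m-3$ for all $m\geqslant 2$), so no separate $m=2$ argument should be needed.

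The paper resolves exactly this obstacle by not working in $\B_{2m}$ at all. It first proves $H_{1,m}(\B_{2m+1})=0$ (Lemma~\ref{twomplusoneacyclic-lemma}): in the spectral sequence for $n=2m+1$ any higher differential into bidegree $(1,m)$ has source in homological degree $b\geqslant 3$, and these sources vanish by Lemma~\ref{twomplusone-acyclicity}, which follows from the filtration comparison $F_1\hookrightarrow\B_n$ (a homology surjection in the range $b\leqslant n-2d$, with $F_1$ concentrated in $b\leqslant 2$). It then transfers this vanishing to $H_{0,m}(\B_{2m})=0$ by the explicit chain-level ``cheap trick'' of Lemma~\ref{twomacyclic-lemma}: the composite $\Sigma_b A_{2m}\to\B_{2m+1}\to\Sigma_b\B_{2m}\otimes\B_1\to\Sigma_b\B_{2m}$, $x\mapsto x\otimes\sigma-\sigma\otimes x$ followed by projection onto the $-\otimes A_{1,0}$ summands, is zero on homology in bidegree $(1,m)$ because it factors through $H_{1,m}(\B_{2m+1})=0$, yet it realises the surjection $A_{2m}\to H_0(\B_{2m})$. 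Supplying either this transfer argument, or some other mechanism that genuinely kills the higher differentials into your augmentation column, is what your proposal is missing.
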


Homological stability results like Theorem~\ref{theorem-stability}
are often combined with theorems computing
the stable homology $\lim_{n\to\infty}H_\ast(G_n)$
to deduce the value of $H_\ast(G_n)$ in the stable range.
In a similar vein, Theorems~\ref{theorem-kernel} and~\ref{theorem-realisation}
allow us to bound the last two unstable groups $H_m(G_{2m})$ and $H_m(G_{2m-1})$
in terms of $\lim_{n\to\infty}H_\ast(G_n)$.
In the following subsections we will see how this works for 
automorphism groups of free groups
and general linear groups of PIDs.
Note that our results do not rule out the possibility of a larger stable
range than the one provided by Theorem~\ref{theorem-stability}.
Nevertheless, in what follows we will refer to $H_m(G_{2m})$
and $H_m(G_{2m-1})$ as the `last two unstable groups'.

\subsection{Applications to automorphism groups of free groups}
The automorphism groups of free groups form a family of groups
with multiplication $(\Aut(F_n))_{n\geqslant 0}$.
In this case the splitting poset $\SP_n$ consists of pairs 
$(A,B)$ of proper subgroups of $F_n$
satisfying $A\ast B = F_n$.  
By relating the splitting poset to the poset of free factorisations
studied by Hatcher and Vogtmann in~\cite{HatcherVogtmannCerf},
we are able to show that $|\SP_n|$ is $(n-3)$-connected,
so that Theorems~\ref{theorem-stability},
\ref{theorem-kernel} and~\ref{theorem-realisation}
can be applied.
Our first new result is obtained using Theorem~\ref{theorem-stability}
in arbitrary characteristic, and Theorems~\ref{theorem-stability},
\ref{theorem-kernel} and~\ref{theorem-realisation}
in characteristic other than $2$.

\begin{Th}\label{theorem-autfn-stability}
Let $\F$ be a field.  Then the stabilisation map
\[
	s_\ast\colon H_\ast(\Aut(F_{n-1});\F)
	\longrightarrow
	H_\ast(\Aut(F_n);\F)
\]
is an isomorphism for $\ast\leqslant\frac{n-2}{2}$ and 
a surjection for $\ast\leqslant\frac{n-1}{2}$.
Moreover, if $\mathrm{char}(\F)\neq 2$, then
$s_\ast$ is an isomorphism for $\ast\leqslant\frac{n-1}{2}$
and a surjection for $\ast\leqslant\frac{n}{2}$.
\end{Th}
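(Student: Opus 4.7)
The plan is to apply Theorems~\ref{theorem-stability}, \ref{theorem-kernel} and~\ref{theorem-realisation} to the family $(\Aut(F_n))_{n\geqslant 0}$, and the only non-algebraic input needed is the connectivity of the splitting posets. So first I would establish that $|\SP_n|$ is $(n-3)$-connected for this family. Recall that $\SP_n$ consists of pairs $(A,B)$ of proper subgroups of $F_n$ with $A\ast B=F_n$. The strategy, as indicated in the introduction, is to compare $\SP_n$ with the poset of free factorisations $\mathrm{FC}_n$ studied by Hatcher and Vogtmann in \cite{HatcherVogtmannCerf}, whose $(n-3)$-connectivity is known. I would construct an order-preserving map between the posets (most naturally from the Hatcher--Vogtmann poset of ordered free factorisations to $\SP_n$, obtained by collapsing a factorisation $F_n=A_1\ast\cdots\ast A_k$ to the two-piece splitting $(A_1,A_2\ast\cdots\ast A_k)$, or similar) and then use a standard poset-theoretic tool (Quillen's Theorem~A, a fibre-connectivity argument, or an explicit deformation retraction) to transfer the connectivity bound to $|\SP_n|$.

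Once connectivity is verified, Theorem~\ref{theorem-stability} immediately yields the first half of the statement, valid over an arbitrary field $\F$.

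For the improvement when $\mathrm{char}(\F)\neq 2$, the key input is the computation of $H_1(\Aut(F_2);\F)$. I would invoke the classical fact that the abelianisation of $\Aut(F_2)$ is a $2$-torsion group (it surjects onto $GL_2(\Z)^{\mathrm{ab}}\cong(\Z/2)^2$, and the relations in $\Aut(F_2)$ in terms of Nielsen generators force the abelianisation to be exponent two). Consequently
\[
H_1(\Aut(F_2);\F)=0 \quad\text{whenever}\quad \mathrm{char}(\F)\neq 2.
\]
With this vanishing in hand, Theorem~\ref{theorem-kernel} tells us that $\ker[s_\ast\colon H_m(\Aut(F_{2m});\F)\to H_m(\Aut(F_{2m+1});\F)]$ is the image of a product in which one tensor factor is $\ker[H_1(\Aut(F_2);\F)\to H_1(\Aut(F_3);\F)]$, which is zero. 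Combined with the surjection from Theorem~\ref{theorem-stability}, this upgrades the stable range by one and gives the improved isomorphism for $\ast\leqslant\frac{n-1}{2}$. Similarly, Theorem~\ref{theorem-realisation} produces a surjection
\[
H_m(\Aut(F_{2m-1});\F)\oplus H_1(\Aut(F_2);\F)^{\otimes m}\twoheadrightarrow H_m(\Aut(F_{2m});\F),
\]
whose second summand vanishes, so the stabilisation map alone is surjective, extending the range of surjectivity to $\ast\leqslant\frac{n}{2}$.

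The main obstacle is clearly the connectivity estimate for $|\SP_n|$: the comparison with the Hatcher--Vogtmann complex is not tautological, since their poset parametrises ordered free factorisations with arbitrarily many factors, while $\SP_n$ records only two-piece splittings, and one must check carefully that forgetting the extra structure does not destroy connectivity. The computation of $H_1(\Aut(F_2);\F)$ is standard, and the remaining algebra is bookkeeping once the three general theorems are in place.
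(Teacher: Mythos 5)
Your proposal matches the paper's proof in all essentials: the first range comes straight from Theorem~\ref{theorem-stability}, and the improvement in characteristic $\neq 2$ comes from the vanishing of $H_1(\Aut(F_2);\F)$ (the abelianisation being elementary abelian $2$-torsion, as in Lemma~\ref{lemma-low-autfn}) fed into Theorems~\ref{theorem-kernel} and~\ref{theorem-realisation}, exactly as the paper argues. The connectivity input is also handled in the same spirit as the paper, which identifies $|\SP_n|$ with the realisation of the poset of \emph{ordered} free factorisations and transfers Hatcher--Vogtmann's connectivity of the unordered poset via the Bj\"orner--Wachs--Welker poset fibre theorem, i.e.\ precisely the kind of fibre-connectivity comparison you sketch.
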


Hatcher and Vogtmann showed in~\cite{HatcherVogtmannStability}
that $s_\ast\colon H_\ast(\Aut(F_{n-1}))\to H_\ast(\Aut(F_n))$
is an isomorphism for $\ast\leqslant \frac{n-3}{2}$ and a surjection
for $\ast\leqslant\frac{n-2}{2}$, where homology is taken with arbitrary
coefficients.
Theorem~\ref{theorem-autfn-stability} increases this
stable range one step to the left in each degree
when coefficients are taken in a field,
and two steps to the left in each degree
when coefficients are taken in a field
of characteristic other than $2$.
(In characteristic $0$ this falls far short of the 
best known result~\cite{HatcherVogtmannRational}.)
In particular we learn for the first time that 
the groups $H_m(\Aut(F_{2m+1});\F)$ are stable.

By applying
Theorems~\ref{theorem-kernel} and~\ref{theorem-realisation}
when $\F=\F_2$, we are able to learn the following about 
the last two unstable groups
$H_m(\Aut(F_{2m});\F_2)$ and $H_m(\Aut(F_{2m-1});\F_2)$.

\begin{Th}\label{theorem-autfn-outside}
Let $t\in H_1(\Aut(F_2);\F_2)$ denote the element
determined by the transformation
$x_1\mapsto x_1$, $x_2\mapsto x_1x_2$,
and let $m\geqslant 1$.
Then the kernel of the stabilisation map
\[
	s_\ast\colon H_m(\Aut(F_{2m});\F_2)\twoheadrightarrow 
	H_m(\Aut(F_{2m+1});\F_2)
\]
is the span of $t^m$, and the map
\[
	H_m(\Aut(F_{2m-1});\F_2)\oplus\F_2\to H_m(\Aut(F_{2m});\F_2),
	\qquad
	(x,y)\mapsto s_\ast(x)+y\cdot t^m
\]
is surjective.
\end{Th}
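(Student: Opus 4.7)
The plan is to apply Theorems~\ref{theorem-kernel} and~\ref{theorem-realisation} to the family $(\Aut(F_n))_{n\geqslant 0}$ with coefficients in $\F_2$; the $(n-3)$-connectivity hypothesis on $|\SP_n|$ will have been established earlier in the paper by comparison with Hatcher--Vogtmann's poset of free factorisations. The first step is to compute $H_1(\Aut(F_2);\F_2)\cong\F_2\oplus\F_2$, for instance via the Lyndon--Hochschild--Serre spectral sequence for $1\to F_2\to\Aut(F_2)\to\GL_2(\Z)\to 1$, and to note that $H_1(\Aut(F_n);\F_2)=\F_2$ for $n\geqslant 3$ is generated by the determinant. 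Then $s_\ast(t)=0$ in $H_1(\Aut(F_3);\F_2)$ (since the transvection lies in $\SL_2(\Z)$ and becomes a commutator after one stabilisation), so $\ker[H_1(\Aut(F_2);\F_2)\to H_1(\Aut(F_3);\F_2)]=\F_2\cdot t$. A crucial auxiliary choice is to take a complementary generator $\sigma=s_\ast(\tilde\sigma)$, the image of the inversion class $\tilde\sigma\in H_1(\Aut(F_1);\F_2)=\F_2$ under stabilisation.

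Second, I would deduce the realisation statement from Theorem~\ref{theorem-realisation}. The image of $H_1(\Aut(F_2))^{\otimes m}\to H_m(\Aut(F_{2m});\F_2)$ is spanned by the products $\sigma^j t^{m-j}$ for $0\leqslant j\leqslant m$. Because $\sigma=[1]\cdot\tilde\sigma$ in the graded ring $\bigoplus_p H_\ast(\Aut(F_p);\F_2)$, associativity gives $\sigma\cdot x=s_\ast(\tilde\sigma\cdot x)$, and so every $\sigma^j t^{m-j}$ with $j\geqslant 1$ lies in the image of the stabilisation $s_\ast\colon H_m(\Aut(F_{2m-1});\F_2)\to H_m(\Aut(F_{2m});\F_2)$. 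Only $t^m$ escapes, and the stated surjection $(x,y)\mapsto s_\ast(x)+y\cdot t^m$ follows.

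Third, I would prove the kernel statement. Theorem~\ref{theorem-kernel} identifies $\ker s_\ast$ with $t\cdot I_{m-1}$, where $I_{m-1}$ is the image of $H_1(\Aut(F_2))^{\otimes(m-1)}\to H_{m-1}(\Aut(F_{2m-2});\F_2)$. Repeating the realisation analysis a degree lower shows that $I_{m-1}\subseteq s_\ast(H_{m-1}(\Aut(F_{2m-3});\F_2))+\F_2\cdot t^{m-1}$. The pivotal computation is
\[
	t\cdot s_\ast(y)=(t\cdot [1])\cdot y=s_\ast(t)\cdot y=0,
\]
using the commutativity of the family's product (so that right- and left-stabilisations of $t$ coincide on homology) together with $s_\ast(t)=0$. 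Consequently $t\cdot I_{m-1}\subseteq \F_2\cdot t^m$, and since the same identity yields $s_\ast(t^m)=t^{m-1}\cdot s_\ast(t)=0$, we have $t^m\in\ker s_\ast$ and conclude that $\ker s_\ast=\F_2\cdot t^m$.

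The main obstacle, in my view, is recognising that the collapse from the a priori $(m+1)$-element spanning set $\{\sigma^j t^{m-j}\}$ down to the single element $t^m$ hinges on choosing $\sigma$ in the image of the stabilisation from $\Aut(F_1)$. This factorisation is what turns the two kernel-generating products $\sigma^j t^{m-j}$ with $j\geqslant 1$ into stabilisations that are then killed upon multiplying by $t$; it is specific to $\F_2$ coefficients, where $H_1(\Aut(F_1);\F_2)$ already detects the determinant. Without this choice, one would be left with an extra term $\sigma t^{m-1}$ of unclear provenance, and a more delicate argument (perhaps tracking injectivity of the two-fold stabilisation $s_\ast^2\colon H_m(\Aut(F_{2m-1}))\to H_m(\Aut(F_{2m+1}))$) would be required.
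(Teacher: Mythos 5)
Your proposal is correct and follows essentially the same route as the paper: apply Theorems~\ref{theorem-kernel} and~\ref{theorem-realisation} together with the low-rank computation recorded in Lemma~\ref{lemma-low-autfn} (a basis $s_2=s_\ast(s_1)$, $t$ of $H_1(\Aut(F_2);\F_2)$ with $s_\ast(t)=0$), and then collapse every product involving the stabilised generator into the image of $s_\ast$, leaving only $t^m$. Your pivotal identity $t\cdot s_\ast(y)=s_\ast(t)\cdot y=0$ is just a reorganisation of the paper's $s_2\cdot t=s_\ast(s_1)\cdot t=s_1\cdot s_\ast(t)=0$, so the two arguments agree in substance.
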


This theorem shows that the last unstable group
$H_m(\Aut(F_{2m});\F_2)$ is either isomorphic to the stable homology
$\lim_{n\to\infty} H_m(\Aut(F_n);\F_2)$,
or is an extension of it by a copy of $\F_2$ generated by $t^m$.
It does not state which possibility holds.
Galatius~\cite{Galatius} identified the stable homology 
$\lim_{n\to\infty}H_\ast (\Aut(F_n))$ 
with $H_\ast(\Omega^\infty_0 S^\infty)$,
where $\Omega_0^\infty S^\infty$ denotes a path-component of 
$\Omega^\infty S^\infty = \colim_{n\to\infty} \Omega^n S^n$.
Thus we are able to place the following bounds on the dimensions
of the last two unstable groups for $m\geqslant 1$,
where $\epsilon$ is either $0$ or $1$.
\begin{gather*}
	\dim(H_m(\Aut(F_{2m});\F_2))
	=
	\dim(H_m(\Omega_0^\infty S^\infty;\F_2))+\epsilon
	\\
	\dim(H_m(\Aut(F_{2m-1});\F_2))\geqslant
	\dim(H_m(\Omega_0^\infty S^\infty;\F_2))
\end{gather*}

\subsection{Applications to general linear groups of PIDs}

The general linear groups of a commutative ring $R$ form a family
of groups with multiplication $(\GL_n(R))_{n\geqslant 0}$.
When $R$ is a PID,  
the realisation $|\SP_n|$ of the splitting
poset is precisely the \emph{split building} $[R^n]$ 
studied by Charney, who showed that it is $(n-3)$-connected~\cite{Charney}.
Theorems~\ref{theorem-stability},
\ref{theorem-kernel} and~\ref{theorem-realisation}
can therefore be applied in this setting.

Theorem~\ref{theorem-stability} shows that 
$H_\ast(\GL_{n-1}(R))\to H_\ast(\GL_n(R))$
is onto for $\ast\leqslant\frac{n-1}{2}$ and an isomorphism
for $\ast\leqslant\frac{n-2}{2}$, where homology is taken with
field coefficients.  This exactly recovers homological stability 
with the range due to van der Kallen~\cite{vanderKallen},
but only with field coefficients.
Theorems~\ref{theorem-kernel} and~\ref{theorem-realisation}
then allow us to learn about the last two unstable groups
$H_{m}(\GL_{2m-1}(R))$ and $H_m(\GL_{2m}(R))$, where  
little seems to be known in general.
In order to illustrate this we specialise to the cases $R=\Z$ and
$R=\F_2$ and take coefficients in $\F_2$; this is the content
of our next two subsections.

\subsection{Applications to the general linear groups of $\Z$}

We now specialise to the groups $\GL_n(\Z)$ and take coefficients in $\F_2$.
Theorems~\ref{theorem-kernel} and~\ref{theorem-realisation}
give us the following information about the final two unstable groups
$H_m(\GL_{2m}(\Z);\F_2)$ and $H_m(\GL_{2m-1}(\Z);\F_2)$.

\begin{Th}
\label{glnz-outside}
Let $t$ denote the element of $H_1(\GL_2(\Z);\F_2)$ determined
by the matrix
$\left(\begin{smallmatrix} 1 & 1 \\ 0 & 1\end{smallmatrix}\right)$
and let $m\geqslant 1$.
Then the kernel of the stabilisation map
\[
	s_\ast\colon H_m(\GL_{2m}(\Z);\F_2)\twoheadrightarrow 
	H_m(\GL_{2m+1}(\Z);\F_2)
\]
is the span of $t^m$, and the map
\[
	H_m(\GL_{2m-1}(\Z);\F_2)\oplus\F_2\to H_m(\GL_{2m}(\Z);\F_2),
	\qquad
	(x,y)\mapsto s_\ast(x)+y\cdot t^m
\]
is surjective.
\end{Th}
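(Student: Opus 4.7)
The plan is to deduce Theorem \ref{glnz-outside} from Theorems \ref{theorem-kernel} and \ref{theorem-realisation} applied to the family $(\GL_n(\Z))_{n\geqslant 0}$. The connectivity hypothesis is available because $|\SP_n|$ is Charney's split building, known to be $(n-3)$-connected, as already noted in the paper. The central task is then to identify $\ker[s_\ast\colon H_1(\GL_2(\Z);\F_2)\to H_1(\GL_3(\Z);\F_2)]$ as $\langle t\rangle$, and to exploit an absorption identity to reduce the output of the general theorems to the single class $t^m$.

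For the $H_1$-kernel, I compute both sides via abelianizations. For $n\geqslant 3$ the determinant gives $\GL_n(\Z)^{\mathrm{ab}}=\Z/2$, so $H_1(\GL_3(\Z);\F_2)=\F_2$. Using the decomposition $\GL_2(\Z)=\SL_2(\Z)\rtimes\Z/2$ together with $\SL_2(\Z)^{\mathrm{ab}}=\Z/12$, one obtains $\GL_2(\Z)^{\mathrm{ab}}=(\Z/2)^2$, so $H_1(\GL_2(\Z);\F_2)$ is two-dimensional. The image of $\left(\begin{smallmatrix}1&1\\0&1\end{smallmatrix}\right)$ in $\GL_3(\Z)$ is an elementary matrix, which sits in the perfect subgroup $\SL_3(\Z)=[\GL_3(\Z),\GL_3(\Z)]$, so $s_\ast(t)=0$. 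On the other hand, the reduction $\GL_2(\Z)\to\GL_2(\F_2)=S_3$ sends $\left(\begin{smallmatrix}1&1\\0&1\end{smallmatrix}\right)$ to a transposition, so $t\neq 0$ in $H_1(\GL_2;\F_2)$. By dimension count, $\ker[s_\ast]=\langle t\rangle$.

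The engine of the reduction to $\langle t^m\rangle$ is an absorption identity. I complete $\{t\}$ to a basis of $H_1(\GL_2;\F_2)$ with the class $d$ of $\left(\begin{smallmatrix}-1&0\\0&1\end{smallmatrix}\right)$; this matrix is $\GL_2(\Z)$-conjugate to $\left(\begin{smallmatrix}1&0\\0&-1\end{smallmatrix}\right)$, which is the stabilisation of the nontrivial class $d_1\in H_1(\GL_1(\Z);\F_2)$. Hence $d=s_\ast(d_1)$, and centrality of $[1]\in H_0(\GL_1)$ in the graded-commutative $\F_2$ Pontrjagin ring gives $d\cdot y=s_\ast(d_1\cdot y)$ for any class $y$, placing any Pontrjagin product with a $d$-factor in the image of stabilisation. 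Applying Theorem \ref{theorem-realisation}, every class in $H_m(\GL_{2m};\F_2)$ is a sum of a class from $s_\ast(H_m(\GL_{2m-1};\F_2))$ and a product $x_1\cdots x_m$ with $x_i\in\{t,d\}$; any such product with a $d$-factor is absorbed into the stabilised summand, leaving $t^m$ as the only new generator. This is the surjection in the statement. For the kernel, Theorem \ref{theorem-kernel} identifies $\ker[s_\ast\colon H_m(\GL_{2m})\to H_m(\GL_{2m+1})]$ as the image of $H_1(\GL_2)^{\otimes(m-1)}\otimes\langle t\rangle\to H_m(\GL_{2m})$, and the same absorption reduces this image to $\langle t^m\rangle$.

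The main obstacle is the bookkeeping at the end of the kernel argument: absorption places mixed products $d^at^{m-a}$ with $a\geqslant 1$ in $s_\ast(H_m(\GL_{2m-1};\F_2))$ but does not on its own show them to vanish or equal $t^m$ in $H_m(\GL_{2m};\F_2)$. Pinning the kernel down to the single class $t^m$ requires controlling the intersection of the outgoing kernel of $s_\ast$ with the incoming image of $s_\ast$, and I would expect this final step to need direct input from the spectral sequence that underpins Theorem \ref{theorem-kernel}, rather than being a purely formal corollary.
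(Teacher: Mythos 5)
Your overall strategy is the right one and your surjectivity half is essentially the paper's own argument: apply Theorem~\ref{theorem-realisation}, write the second summand in terms of the basis $\{t,d\}$ of $H_1(\GL_2(\Z);\F_2)$, and absorb every word containing a $d$-factor into the image of $s_\ast$ via $d\cdot y=s_\ast(d_1\cdot y)$. The genuine gap is the one you flag yourself at the end: for the kernel statement you only show that the mixed products $d^a t^{m-a}$ with $a\geqslant 1$ lie in the image of $s_\ast$, which does not show that the kernel of $s_\ast\colon H_m(\GL_{2m}(\Z);\F_2)\to H_m(\GL_{2m+1}(\Z);\F_2)$ is contained in the span of $t^m$; a nonzero stable class could perfectly well lie in that kernel for all your argument says. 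So as written the kernel half of the theorem is not proved.

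However, your guess that closing this requires going back into the spectral sequence behind Theorem~\ref{theorem-kernel} is wrong: the missing step is a one-line formal identity using exactly the three facts you already established. Since $d=s_\ast(d_1)=\sigma\cdot d_1$ with $\sigma$ the degree-zero stabilising class, graded commutativity of the Pontrjagin ring lets you move $\sigma$ onto the $t$-factor instead of pulling it outside: $d\cdot t=s_\ast(d_1)\cdot t=d_1\cdot(\sigma\cdot t)=d_1\cdot s_\ast(t)=0$, because $s_\ast(t)=0$ in $H_1(\GL_3(\Z);\F_2)$. Every generator of the image in Theorem~\ref{theorem-kernel} is a word $x_1\cdots x_{m-1}\cdot t$ with $x_i\in\{t,d\}$ and at least one $t$-factor, so any such word containing a $d$-factor contains $d\cdot t=0$ and vanishes identically, not merely up to stable classes; hence the kernel is exactly the span of $t^m$. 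This is precisely how the paper argues (its $s_2$, $s_1$ are your $d$, $d_1$). One further small point: your ``dimension count'' identifying $\ker[s_\ast\colon H_1(\GL_2(\Z);\F_2)\to H_1(\GL_3(\Z);\F_2)]$ with $\langle t\rangle$ tacitly needs $s_\ast$ to be nonzero on $H_1$; this is immediate since $s_\ast(d)$ is the class of $\mathrm{diag}(-1,1,1)$, detected by the determinant, so the image is all of $H_1(\GL_3(\Z);\F_2)$ and the kernel is one-dimensional, but you should say so.
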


This theorem shows that the last unstable group
$H_m(\GL_{2m}(\Z);\F_2)$ is either isomorphic to 
the stable homology $\lim_{n\to\infty} H_m(\GL_m(\Z);\F_2)$,
or is an extension of it by a copy of $\F_2$
generated by $t^m$.  It does not guarantee that $t^m\neq 0$,
and so does not specify which possibility occurs.
The theorem also gives us the following lower bounds 
on the dimensions of the last two
unstable groups in terms of $\dim(\lim_{n\to\infty} H_m(\GL_n(\Z);\F_2))$,
and in particular shows that they are highly nontrivial.
\begin{gather*}
	\dim(H_m(\GL_{2m}(\Z);\F_2))
	=
	\dim\left(\lim_{n\to\infty}H_m(\GL_n(\Z);\F_2)\right)+\epsilon
	\\
	\dim(H_m(\GL_{2m-1}(\Z);\F_2)) 
	\geqslant
	\dim\left(\lim_{n\to\infty} H_m(\GL_n(\Z);\F_2)\right)
\end{gather*}
Here $\epsilon$ is either $0$ or $1$.

\subsection{Applications to the general linear groups of $\F_2$}

Now let us specialise to the groups $\GL_n(\F_2)$.
Quillen showed that in this case the stable homology 
$\lim_{n\to\infty}H_\ast(\GL_n(\F_2);\F_2)$ vanishes~\cite[Section~11]{Quillen}.
Combining this with Maazen's stability result shows that 
$H_m(\GL_n(\F_2);\F_2)=0$ for $n\geqslant 2m+1$.
It is natural to ask for a description of the
final unstable homology groups
$H_m(\GL_{2m}(\F_2);\F_2)$.  These are known to be 
nontrivial for $m=1$ and $m=2$, the latter case
being due to Milgram and Priddy (Example~2.6 and Theorem~6.5
of~\cite{MilgramPriddy}), but to the best of our knowledge nothing
further is known.  By applying Theorem~\ref{theorem-kernel}
we obtain the following result, which determines each of the groups
$H_m(\GL_{2m}(\F_2);\F_2)$ up to a single ambiguity.

\begin{Th}\label{theorem-glnftwo}
Let $t$ denote the element of $H_1(\GL_2(\F_2);\F_2)$ determined
by the matrix 
$\left(\begin{smallmatrix} 1 & 1 \\ 0 & 1\end{smallmatrix}\right)$.
Then $H_m(\GL_{2m}(\F_2);\F_2)$ is either trivial, or is a copy
of $\F_2$ generated by the class $t^m$.
\end{Th}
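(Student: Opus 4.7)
The plan is to assemble Theorem~\ref{theorem-kernel} with two external ingredients: Quillen's vanishing of the stable mod-$2$ homology of $\GL_n(\F_2)$, and a pair of low-dimensional $H_1$ computations. The reason this works is that Theorem~\ref{theorem-kernel} identifies the kernel of a particular stabilisation map, and in the present setting that map has trivial codomain, so the ``kernel'' is everything.

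First, I would dispose of the codomain. Combining Quillen's computation that $\lim_{n\to\infty} H_\ast(\GL_n(\F_2);\F_2)=0$ with the stable range (either Maazen's, or the one given by Theorem~\ref{theorem-stability} applied to the general linear groups), one has $H_m(\GL_{2m+1}(\F_2);\F_2)=0$ for $m\geqslant 1$. This is already pointed out in the introduction to the present subsection, so I would just cite it. Consequently the stabilisation map
\[
    s_\ast\colon H_m(\GL_{2m}(\F_2);\F_2)\twoheadrightarrow H_m(\GL_{2m+1}(\F_2);\F_2)
\]
is the zero map, and Theorem~\ref{theorem-kernel} (with the connectivity hypothesis supplied by Charney's split building) therefore identifies all of $H_m(\GL_{2m}(\F_2);\F_2)$ with the image of the iterated product map
\[
    H_{1}(\GL_2(\F_2);\F_2)^{\otimes (m-1)}\otimes \ker\bigl[H_1(\GL_2(\F_2);\F_2)\xrightarrow{s_\ast} H_1(\GL_3(\F_2);\F_2)\bigr]\longrightarrow H_m(\GL_{2m}(\F_2);\F_2).
\]

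Next I would compute the two relevant first homologies. The isomorphism $\GL_2(\F_2)\cong S_3$ has abelianisation $\Z/2$, so by the universal coefficient theorem $H_1(\GL_2(\F_2);\F_2)\cong\F_2$; under the sign map the transvection defining $t$ goes to the nontrivial element, so $H_1(\GL_2(\F_2);\F_2)=\F_2\cdot t$. For $\GL_3(\F_2)$, I would use that this group coincides with $\SL_3(\F_2)$ and is a nonabelian simple group (isomorphic to $\mathrm{PSL}_2(\F_7)$), hence perfect; thus $H_1(\GL_3(\F_2);\F_2)=0$. Therefore the kernel appearing above is all of $H_1(\GL_2(\F_2);\F_2)=\F_2\cdot t$, and the domain of the product map is the one-dimensional $\F_2$-vector space spanned by $t^{\otimes m}$, which is sent to $t^m$ under the graded-ring product.

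The conclusion is then immediate: $H_m(\GL_{2m}(\F_2);\F_2)$ equals the image of this one-dimensional space, so it is either trivial or a copy of $\F_2$ generated by $t^m$. There is no real obstacle here beyond assembling the pieces; the ``hardest'' step is really only the bookkeeping observation that, because the target of $s_\ast$ vanishes, Theorem~\ref{theorem-kernel} pins down the \emph{whole} group and not just its unstable part. Note in particular that the argument gives no information about whether $t^m$ itself vanishes, which accounts for the ambiguity in the statement.
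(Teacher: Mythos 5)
Your proposal is correct and matches the paper's own argument: the paper likewise notes that $H_m(\GL_{2m+1}(\F_2);\F_2)=0$ (Quillen plus stability), applies Theorem~\ref{theorem-kernel}, and invokes Lemma~\ref{lemma-low-glnftwo} (which records exactly your $H_1$ computations, via $\GL_2(\F_2)$ being dihedral of order $6$ and $\GL_3(\F_2)$ being perfect) to identify the image with the span of $t^m$. The only cosmetic difference is that you phrase the $H_1$ facts via $S_3$ and simplicity of $\GL_3(\F_2)$ and pass to $\F_2$-coefficients by universal coefficients, which is the same content.
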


We hope that by extending the techniques of the present paper
we will be able in future to prove the following conjecture.
We anticipate that the known non-vanishing of $t$ and $t^2$ will be
an essential ingredient in its proof.

\begin{conjecture}
For every $m\geqslant 1$ the group 
$H_m(\GL_{2m}(\F_2);\F_2)$ is a single copy of $\F_2$
generated by the class $t^m$, where $t\in H_1(\GL_2(F_2);\F_2)$
is the element determined by the matrix 
$\left(\begin{smallmatrix} 1 & 1 \\ 0 & 1\end{smallmatrix}\right)$.
\end{conjecture}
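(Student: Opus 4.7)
The plan is to apply Theorem~\ref{theorem-kernel} to the family $(\GL_p(\F_2))_{p\geqslant 0}$, whose splitting posets are $(n-3)$-connected by Charney's result recalled above, and then exploit Quillen's vanishing of the stable homology $\lim_{n\to\infty} H_\ast(\GL_n(\F_2);\F_2)$. Combined with the stability statement $H_m(\GL_n(\F_2);\F_2)=0$ for $n\geqslant 2m+1$ noted just before the theorem, this vanishing means that the stabilisation
\[
s_\ast\colon H_m(\GL_{2m}(\F_2);\F_2)\twoheadrightarrow H_m(\GL_{2m+1}(\F_2);\F_2)=0
\]
has kernel equal to the whole of $H_m(\GL_{2m}(\F_2);\F_2)$. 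By Theorem~\ref{theorem-kernel} this kernel is the image of the product map
\[
H_1(\GL_2(\F_2);\F_2)^{\otimes m-1}\otimes\ker[H_1(\GL_2(\F_2);\F_2)\xrightarrow{s_\ast} H_1(\GL_3(\F_2);\F_2)]\longrightarrow H_m(\GL_{2m}(\F_2);\F_2).
\]

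Next I would compute the two factors on the left. Since $\GL_2(\F_2)\cong S_3$, we have $H_1(\GL_2(\F_2);\F_2)=\F_2$, and under this identification the matrix $\left(\begin{smallmatrix}1 & 1 \\ 0 & 1\end{smallmatrix}\right)$ is a transposition and so maps nontrivially to the sign quotient; hence $t$ generates $H_1(\GL_2(\F_2);\F_2)$. On the other hand $\GL_3(\F_2)$ is a well-known simple group of order $168$, and in particular is perfect, so $H_1(\GL_3(\F_2);\F_2)=0$. Therefore the kernel appearing in degree one is the whole of $H_1(\GL_2(\F_2);\F_2)=\F_2\cdot t$, and the tensor product above is the one-dimensional $\F_2$-vector space spanned by $t^{\otimes(m-1)}\otimes t$.

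Putting these two ingredients together, the image of the product map is precisely the $\F_2$-span of $t^{m-1}\cdot t = t^m$ inside $H_m(\GL_{2m}(\F_2);\F_2)$, and by the first paragraph this image is all of $H_m(\GL_{2m}(\F_2);\F_2)$. Consequently $H_m(\GL_{2m}(\F_2);\F_2)$ is generated as an $\F_2$-vector space by the single class $t^m$, so it is either trivial or a copy of $\F_2$ generated by $t^m$, as asserted. The only non-formal ingredients are the classical identifications $\GL_2(\F_2)\cong S_3$ and the perfectness of $\GL_3(\F_2)$, neither of which is a genuine obstacle; the real content of the argument is entirely packaged into Theorem~\ref{theorem-kernel} and Quillen's computation.
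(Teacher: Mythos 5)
There is a genuine gap, and it is exactly the gap that makes this statement a conjecture rather than a theorem. Your argument shows that $H_m(\GL_{2m}(\F_2);\F_2)$ is \emph{spanned} by the class $t^m$: Theorem~\ref{theorem-kernel} plus Quillen's vanishing of the stable homology identifies the whole group with the image of the product map, and the computations $H_1(\GL_2(\F_2);\F_2)=\F_2\cdot t$ (via $\GL_2(\F_2)\cong S_3$) and $H_1(\GL_3(\F_2);\F_2)=0$ (perfectness) pin that image down to the span of $t^m$. This is correct, and it is essentially the argument given in the paper for Theorem~\ref{theorem-glnftwo}, using Lemma~\ref{lemma-low-glnftwo}. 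But that only yields the dichotomy ``trivial or $\F_2\cdot t^m$'' --- which is what your last paragraph in fact concludes. The conjecture asserts strictly more: that $t^m\neq 0$ in $H_m(\GL_{2m}(\F_2);\F_2)$ for every $m\geqslant 1$, so that the group really is a nontrivial copy of $\F_2$.

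Nothing in your proposal addresses this nonvanishing, and it does not follow from Theorem~\ref{theorem-kernel}, Theorem~\ref{theorem-realisation}, or any of the stability machinery, since all of these only bound the group from above by the span of $t^m$. The nonvanishing of $t^m$ is known only for $m=1$ and $m=2$ (the latter by Milgram and Priddy), and the paper explicitly states that resolving it in general would require an extension of the present techniques --- indeed the anticipated proof strategy is expected to \emph{use} the known nonvanishing of $t$ and $t^2$ as input, not derive it formally. So your argument proves Theorem~\ref{theorem-glnftwo}, not the conjecture; to close the gap you would need an independent lower bound, e.g.\ a detection argument showing that $t^m$ has nonzero image under some map out of $H_m(\GL_{2m}(\F_2);\F_2)$ (restriction to the subgroup $M_{mm}$ and comparison with the class dual to $\det_m$ is the natural candidate suggested by the Milgram--Priddy discussion), and no such argument is currently available for $m\geqslant 3$.
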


A proof of this conjecture would, \emph{via} the homomorphisms
$\Aut(F_n)\to\GL_n(\Z)\to\GL_n(\F_2)$, also resolve the ambiguities
in Theorems~\ref{theorem-autfn-outside} and~\ref{glnz-outside},
showing that the final unstable homology groups
$H_m(\Aut(F_{2m}),\F_2)$ and $H_m(\GL_{2m}(\Z);\F_2)$ are extensions by $\F_2$
of $\lim_{n\to\infty} H_m(\Aut(F_{n});\F_2)$ 
and $\lim_{n\to\infty}H_m(\GL_{n}(\Z);\F_2)$
respectively.  In particular this would confirm
that the known homological stability ranges are sharp.

Theorem~\ref{theorem-glnftwo} is relevant to questions 
about the groups $H^m(\GL_{2m}(\F_2);\F_2)$ raised by Milgram
and Priddy in~\cite[p.301]{MilgramPriddy}, and posed explicitly
by Priddy in~\cite[section 5]{PriddyProblem}.  Let $M_{mm}$ denote the subgroup
of $\GL_{2m}(\F_2)$ consisting of matrices of the form
\[
	\begin{pmatrix}
		I_m & \ast
		\\
		0 & I_m
	\end{pmatrix}.
\]
Milgram and Priddy describe an element $\det_m\in H^m(M_{mm};\F_2)$
that is invariant under the action of $N_{\GL_{2m}(\F_2)}(M_{mm})/M_{mm}
=\GL_m(\F_2)\times\GL_m(\F_2)$,
and so potentially lifts to an element of $H^m(\GL_{2m}(\F_2);\F_2)$.
Priddy asks whether $\det_m$ lifts to $H^m(\GL_{2m}(\F_2);\F_2)$, 
and if so, whether it spans $H^m(\GL_{2m}(\F_2);\F_2)$.
As explained to us by David Sprehn, $t^m$ is the image of a class
% in $H_m(M_{mm};\F_2)$ that pairs nontrivially with $\det_m$,
in $H_m(M_{mm};\F_2)$, 
and $\det_m$ spans the invariants 
$H^m(M_{mm};\F_2)^{\GL_m(\F_2)\times\GL_m(\F_2)}$.
Theorem~\ref{theorem-glnftwo} therefore shows that
$H^m(\GL_{2m}(\F_2);\F_2)$ is either trivial, or is a single copy
of $\F_2$ generated by a lift of $\det_m$.

\subsection{Decomposability beyond the stable range}

Let $(G_p)_{p\geqslant 0}$ be a family of groups with multiplication,
and consider the bigraded commutative ring 
$A=\bigoplus_{p\geqslant 0}H_\ast(G_p)$.
Homological stability tells us that any element of $H_\ast(G_p)$
that lies in the stable range decomposes as a product of elements in the
augmentation ideal of $A$.  (In fact it tells us that such an element
decomposes as a product with the generator of $H_0(G_1)$.)
We believe that connectivity bounds on the splitting complex
can yield decomposability results far beyond the stable range.
The following conjecture was formulated after studying explicit
computations for symmetric groups and braid groups~\cite{CLM},
in which cases it holds.

\begin{conjecture}
\label{conjecture}
Let $(G_p)_{p\geqslant 0}$ be a family of groups with multiplication.
Suppose that $|\SP_n|$ is $(n-3)$-connected for all $n\geqslant 2$.
Then the map
\[
	\mu\colon
	\bigoplus_{\substack{p+q=n\\p,q\geqslant 1}}
	H_\ast(G_p)\otimes H_\ast(G_{q})
	\longrightarrow
	H_\ast(G_n)
\]
is surjective in degrees $\ast\leqslant (n-2)$,
and its kernel is the image of 
\[
	\alpha\colon
	\bigoplus_{\substack{p+q+r=n\\p,q,r\geqslant 1}}
	H_\ast(G_p)\otimes H_\ast(G_q)\otimes H_\ast(G_r)
	\longrightarrow
	\bigoplus_{\substack{p+q=n\\p,q\geqslant 1}}
	H_\ast(G_p)\otimes H_\ast(G_{q})
\]
in degrees $\ast\leqslant (n-3)$.
Here $\mu$ and $\alpha$ are defined by $\mu(x\otimes y) = x\cdot y$
and $\alpha(x\otimes y\otimes z) = (x\cdot y)\otimes z - x\otimes(y\cdot z)$.
\end{conjecture}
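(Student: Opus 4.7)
The natural approach is via the spectral sequence associated to the action of $G_n$ on $|\SP_n|$, whose $E^1$-page can be identified with the bar complex of the graded ring $A=\bigoplus_{p\geqslant 0} H_\ast(G_p)$ in internal degree $n$. Under this identification the conjecture becomes a collapse statement for the spectral sequence at $E^2$ in an explicit range of bidegrees.

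First I would write down the equivariant spectral sequence (of the type used throughout the paper) associated to the action of $G_n$ on $|\SP_n|$, with augmentation column $E^1_{-1,q}=H_q(G_n)$ and, for $p\geqslant 0$,
\[
	E^1_{p,q}
	=
	\bigoplus_{\substack{p_0+\cdots+p_{p+1}=n\\ p_i\geqslant 1}}
	H_q(G_{p_0}\times\cdots\times G_{p_{p+1}}).
\]
The $(n-3)$-connectivity of $|\SP_n|$ forces convergence to $0$ in total degrees $\leqslant n-2$. By K\"unneth over $\F$, $E^1_{p,\ast}$ is the internal-degree-$n$ piece of $I^{\otimes(p+2)}$, where $I=\bigoplus_{p\geqslant 1}H_\ast(G_p)$ is the augmentation ideal of $A$. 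Viewing $\SP_n$ as the nerve of the refinement poset of two-piece splittings, one checks that each face map $d_i$ on a $p$-simplex merges the $(i+1)$-th and $(i+2)$-th pieces of the associated $(p+2)$-piece partition, so $d_1$ coincides (up to sign) with the bar differential on $I^{\otimes \bullet}$; in particular $d_1\colon E^1_{0,q}\to E^1_{-1,q}$ is $\mu$ and $d_1\colon E^1_{1,q}\to E^1_{0,q}$ is $\alpha$.

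Under this identification the conjecture asserts exactly that $E^2_{-1,q}=0$ for $q\leqslant n-2$ (surjectivity of $\mu$) and $E^2_{0,q}=0$ for $q\leqslant n-3$ (kernel of $\mu$ equals image of $\alpha$). Convergence alone only gives the weaker $E^\infty_{-1,q}=0$ for $q\leqslant n-1$ and $E^\infty_{0,q}=0$ for $q\leqslant n-2$, so the real content of the conjecture is that the higher differentials $d_r$ ($r\geqslant 2$) entering these low columns contribute nothing new. A plausible strategy is strong induction on $n$: the inductive hypothesis describes each $H_\ast(G_{p'})$ with $p'<n$ appearing in the $E^1$-page as the cokernel of a bar-style map built from $H_\ast(G_j)$ with $j<p'$, which in principle pins down the $E^2$-page and should let one verify the required vanishing together with the vanishing of the relevant $E^r_{r-1,q-r+1}$ that forces $d_r=0$.

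The main obstacle is precisely this control of the higher differentials. Theorems~\ref{theorem-kernel} and~\ref{theorem-realisation} carry out essentially this analysis, but only at a single pair of bidegrees at the edge of the stable range (namely $q=m$ with $n=2m$ or $n=2m-1$), where the proof exploits very specific features of the top-degree stabilisation map; I do not see a direct way to extend those arguments uniformly over the full range $q\leqslant n-2$. New input seems to be needed, perhaps drawing on the braided monoidal (hence $E_2$-algebra) structure on $\bigsqcup_{p\geqslant 0} BG_p$ and on the behaviour of iterated bar constructions of free $E_k$-algebras.
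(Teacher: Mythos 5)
The statement you were asked about is a \emph{conjecture}: the paper does not prove it, and explicitly says so. What the paper proves is only the edge of it, namely the surjectivity of $\mu$ in degrees $\ast\leqslant\frac{n}{2}$ and the kernel statement in degrees $\ast\leqslant\frac{n-1}{2}$ (Lemmas~\ref{twomacyclic-lemma} and~\ref{twomplusoneacyclic-lemma}, repackaged as Theorems~\ref{theorem-realisation} and~\ref{theorem-kernel}). Your setup is exactly the paper's own machinery: the spectral sequence of Theorem~\ref{spectral-sequence-theorem} has $E^1=\B_n$ (the charge-$n$ piece of the bar complex, cf.\ Remark~\ref{remark-bar-two}, so $E^2$ is a $\tor^A$ group), it converges to zero in total degrees $\leqslant n-2$, and the conjecture is precisely the assertion that the $E^2$-page (not just $E^\infty$) already vanishes in homological degrees $0$ and $1$ within that range --- your translation of the two clauses into $E^2$-vanishing statements is correct, up to the one-column shift in indexing between your augmented convention and the paper's.

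The gap you acknowledge is genuine and is exactly where the paper also stops. Convergence kills $E^\infty$ but says nothing about classes in columns $0$ and $1$ of $E^2$ that could instead be killed by incoming higher differentials $d^r$, $r\geqslant 2$; ruling those out amounts to a vanishing statement for $H_{b,d}(\B_n)$ in higher homological degrees $b$ throughout the range $b+d\leqslant n-2$, which is a strengthening of the conjecture itself rather than something the inductive hypothesis for smaller $n$ obviously supplies. The paper's filtration argument (Definition~\ref{filtration-definition} together with Theorem~\ref{stability-inductive}) only yields acyclicity of $\B_n$ in the much narrower range $b\leqslant n-2d-1$, and even the two extra edge bidegrees $(0,m)$ for $n=2m$ and $(1,m)$ for $n=2m+1$ required the explicit computation of the $d^2$ and $d^3$ differentials carried out in the proof of Theorem~\ref{new-kernel-theorem}; nothing in that analysis propagates uniformly over $q\leqslant n-2$. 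So your proposal should be read as a correct reformulation of the problem plus a candidate strategy, not a proof; closing it would need new input (the paper itself suggests none, and your suggestion of exploiting the $E_2$-algebra structure is in the spirit of the Galatius--Kupers--Randal-Williams programme mentioned after Theorem~\ref{theorem-stability}), and until then the statement remains open.
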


We are able to prove the surjectivity statement in degrees 
$\ast\leqslant \frac{n}{2}$ and the injectivity statement
in degrees $\ast\leqslant \frac{n-1}{2}$, both of which
are half a degree better than the stable range
(Lemmas~\ref{twomplusoneacyclic-lemma} and~\ref{twomacyclic-lemma}),
and Theorems~\ref{theorem-kernel} and~\ref{theorem-realisation}
are the `practical' versions of these facts.
We hope that in future work we will be able to 
obtain information further beyond the stable range.

\subsection{Organisation of the paper}

In the first half of the paper we introduce the concepts required
to understand the statements of 
Theorems~\ref{theorem-stability}, \ref{theorem-kernel}
and~\ref{theorem-realisation} and then, assuming these theorems
for the time being, we give the proofs of the applications
stated earlier in this introduction.
Section~\ref{families-section} introduces families of groups with
multiplication, and introduces four main examples: the symmetric groups,
general linear groups of PIDs, automorphism groups of free groups,
and braid groups.
Section~\ref{section-spn} introduces the splitting posets
$\SP_n$
associated to a family of groups with multiplication,
and identifies them in the four examples.
In section~\ref{section-connectivity} we show that
for these four examples, the realisation $|\SP_n|$
of the splitting poset is $(n-3)$-connected.
Finally, in section~\ref{section-applications}
we give the proofs of Theorems~\ref{glnz-outside},
\ref{theorem-glnftwo}, \ref{theorem-autfn-stability}
and \ref{theorem-autfn-outside}.

In the second half of the paper we give the proofs of 
our three general results,
Theorems~\ref{theorem-stability}, \ref{theorem-kernel}
and~\ref{theorem-realisation}.  Section~\ref{section-scn}
introduces the splitting complex, an alternative to the splitting
poset that features in the rest of the argument.
Section~\ref{section-bar} introduces a graded chain complex
$\B_n$ obtained from a family of groups with multiplication.
In section~\ref{section-spectral} we show that, under the hypotheses of
Theorems~\ref{theorem-stability}, \ref{theorem-kernel}
and~\ref{theorem-realisation} there is a spectral sequence with $E^1$-term
$\B_n$ and converging to $0$ in total degrees $\leqslant(n-2)$.
Section~\ref{section-filtration} introduces
and studies a filtration on $\B_n$.  The filtration allows us
to understand the homology of $\B_n$ inductively within a range
of degrees.  Then sections~\ref{section-stability}, \ref{section-realisation}
and~\ref{section-kernel} give the proofs of the three theorems.

\subsection{Acknowledgements}
\label{acknowledgements}

My thanks to Rachael Boyd, Anssi Lahtinen, Martin
Palmer, Oscar Randal-Williams, David Sprehn 
and Nathalie Wahl for useful discussions.

\section{Families of groups with multiplication}
\label{families-section}

In this section we define the families of groups with multiplication
to which our methods will apply, and we provide a series of examples.

\begin{definition}\label{definition-families}
A \emph{family of groups with multiplication} $(G_p)_{p\geqslant 0}$
is a sequence of discrete groups $G_0,G_1,G_2,\ldots$
equipped with a \emph{multiplication map}
\[
	G_p\times G_q\longrightarrow G_{p+q},
	\qquad
	(g,h)\longmapsto g\diamond h
\]
for each $p,q\geqslant 0$. 
We assume that the following axioms hold:
\begin{enumerate}
	\item \emph{Unit:}
	The group $G_0$ is the trivial group, and its unique
	element $e_0$ acts as a unit for left and right multiplication.
	In other words
	$e_0\diamond g = g = g\diamond e_0$
	for all $p\geqslant 0$ and all $g\in G_p$.

	\item \emph{Associativity:}
	The associative law
	\[
		(g\diamond h)\diamond k = g\diamond(h\diamond k).
	\]
	holds for all $p,q,r\geqslant 0$ and
	all $g\in G_p$, $h\in G_q$ and $k\in G_r$.
	Consequently, for any sequence $p_1,\ldots,p_r\geqslant 0$
	there is a well-defined \emph{iterated multiplication map}
	\[
		G_{p_1}\times \cdots \times G_{p_r}\longrightarrow
		G_{p_1+\cdots+p_r}.
	\]	

	\item \emph{Commutativity:}
	The product maps are commutative up to conjugation,
	in the sense that there exists an
	element $\tau_{pq}\in G_{p+q}$ such that the squares 
	\[\xymatrix{
		G_p\times G_q
		\ar[r]\ar[d]_\cong
		&
		G_{p+q}
		\ar[d]^{c_{\tau_{pq}}}
		\\
		G_q\times G_p
		\ar[r]
		&
		G_{p+q}
	}\]
	commute, where $c_{\tau_{pq}}$ denotes conjugation by $\tau_{pq}$.
	(We do not impose any further conditions
	upon the $\tau_{pq}$.)

	\item \emph{Injectivity:}
	The multiplication maps are all injective.
	It follows that the iterated multiplication maps are also injective.
	Using this, we henceforth regard 
	$G_{p_1}\times\cdots\times G_{p_r}$
	as a subgroup of $G_{p_1+\cdots+p_r}$ 
	for each $p_1,\ldots,p_r\geqslant 0$.
	
	\item \emph{Intersection:}
	We have 
	\[
		(G_{p+q}\times G_r)\cap (G_p\times G_{q+r})
		=
		G_p\times G_q\times G_r,
	\]
	for all $p,q,r\geqslant 0$,
	where $G_{p+q}\times G_r$, $G_p\times G_{q+r}$
	and $G_p\times G_q\times G_r$ are all regarded as
	subgroups of $G_{p+q+r}$.
\end{enumerate}
We denote the neutral element of $G_p$ by $e_p$.
\end{definition}

\begin{remark}
We could delete the intersection 
axiom from Definition~\ref{definition-families},
at the expense of working with the splitting complex
of section~\ref{section-scn} instead of the 
splitting poset.  See Remark~\ref{spnorscn} for further discussion.
\end{remark}

\begin{example}[Symmetric groups]
For $p\geqslant 0$ we let $\Sigma_p$ denote the symmetric
group on $n$ letters.  Then we may form the family of groups
with multiplication $(\Sigma_p)_{p\geqslant 0}$, equipped
with the product maps
\[
	\Sigma_p\times\Sigma_q\to\Sigma_{p+q},
	\qquad
	(f,g)\mapsto f\sqcup g
\]
where $f\sqcup g$ is the automorphism of $\{1,\ldots,p+q\}\cong
\{1,\ldots,p\}\sqcup\{1,\ldots,q\}$
given by $f$ on the first summand and by $g$ on the second.
Then the axioms of a multiplicative family are all immediately verified.
In the case of commutativity, the element $\tau_{pq}$
is the permutation that interchanges the first $p$ and last $q$ letters
while preserving their ordering.
\end{example}

\begin{example}[General linear groups of PIDs]
Let $R$ be a PID.  For $n\geqslant 0$,
let $\GL_n(R)$ denote the general linear group of 
$n\times n$ invertible matrices over $R$.
Then we may form the family of
groups with multiplication $(\GL_p(R))_{p\geqslant 0}$,
equipped with the product maps
\[
	\GL_p(R)\times \GL_q(R)\to\GL_{p+q}(R),
	\qquad
	(A,B)\mapsto \begin{pmatrix} A & 0 \\ 0 & B \end{pmatrix}
\]
given by the block sum of matrices.
The unit, associativity, commutativity, injectivity and intersection axioms 
all hold by inspection.  In the case of commutativity, the
element $\tau_{pq}$ is the permutation matrix
$=\left(\begin{smallmatrix} 0 & I_q \\ I_p & 0 \end{smallmatrix}\right)$.
(It would have been enough to assume that $R$ is a commutative
ring here.  However, as we will see later, we will only be
able to apply our results when $R$ is a PID.)
\end{example}

\begin{example}[Automorphism groups of free groups]
For $p\geqslant 0$ we let $F_p$ denote the free group on $p$ letters,
and we let $\Aut(F_p)$ denote the group of automorphisms of $F_p$.
Then we may form the family of groups with multiplication
$(\Aut(F_p))_{p\geqslant 0}$, equipped with the product maps
\[
	\Aut(F_p)\times \Aut(F_q)
	\to
	\Aut(F_{p+q}),
	\qquad
	(f,g)\mapsto f\ast g.
\]
Here $f\ast g$ is the automorphism of $F_{p+q}\cong F_p\ast F_q$
given by $f$ on the first free factor
and by $g$ on the second.
Then the unit, associativity and connectivity axioms
all hold by inspection.  In the case of commutativity,
the element $\tau_{pq}$ is the automorphism
that interchanges the first $p$ generators with the last $q$ generators.
The injectivity axiom is also clear.  We prove the intersection axiom
as follows.
Suppose that 
$
	f_p\ast f_{q+r} = f_{p+q}\ast f_r
$
where each $f_\alpha$ lies in $\Aut(F_\alpha)$.
We would like to show that $f_{q+r} = f_q\ast f_r$ for some 
$f_q\in\Aut(F_q)$.
Let $x_i$ be one of the middle $q$ generators.
Then $f_{q+r}$ sends $x_i$ to a reduced word in the first $p+q$ generators
and to a reduced word in the last $q+r$ generators.  Since an element
of a free group has a unique reduced expression, it follows that $x_i$ is
sent to a word in the middle $q$ generators.  Thus $f_{q+r} = f_q\ast f_r$
for some $f_q\colon F_q\to F_q$.  By inverting the original equation
we see that in fact $f_q\in\Aut(F_q)$.
\end{example}

\begin{example}[Braid groups]
\label{example-braid}
Given $p\geqslant 0$, let $B_p$ denote the braid group
on $p$ strands. This is defined to be the group of diffeomorphisms
of the disk $D^2$ that preserve the boundary pointwise and
that preserve (not necessarily pointwise) a set 
$X_p\subset D^2$ of $p$ points in the interior
of $D^2$, arranged from left to right,
all taken modulo isotopies relative to $\partial D^2$ and $X_p$.
\[\begin{tikzpicture}[scale=0.1]
	\path[draw, line width=1.5, fill=white!80!black] (0,0) circle (15);	
	\path[draw, fill=black] (-10,0) circle (0.5);
	\path[draw, fill=black] (-5,0) circle (0.5);
	\path[draw, fill=black] (-0,0) circle (0.5);
	\path[draw, fill=black] (5,0) circle (0.5);
	\path[draw, fill=black] (10,0) circle (0.5);
	\node at (135:18) {$D^2$};
	\node at (0,-5) {$X_5$};
\end{tikzpicture}\]
The product maps are 
\[
	B_p\times B_q\to B_{p+q},
	\qquad
	(\beta,\gamma)\mapsto \beta\sqcup\gamma
\]
where $\beta\sqcup\gamma$ denotes the braid obtained
by juxtaposing $\beta$ and $\gamma$.  More precisely,
we choose an embedding $D^2\sqcup D^2\hookrightarrow D^2$ 
that embeds two copies of $D^2$ `side by side' in $D^2$,
in such a way that $X_p\sqcup X_q$ is sent into $X_{p+q}$
preserving the left-to-right order.
\[\begin{tikzpicture}[scale=0.1]
	\path[draw, line width=1.5, fill=white!90!black] (0,0) circle (15);	
	\path[draw, line width=1, fill=white!80!black] (-5,0) circle (6.5);	
	\path[draw, line width=1, fill=white!80!black] (7.5,0) circle (4.5);	
	\path[draw, fill=black] (-10,0) circle (0.5);
	\path[draw, fill=black] (-5,0) circle (0.5);
	\path[draw, fill=black] (-0,0) circle (0.5);
	\path[draw, fill=black] (5,0) circle (0.5);
	\path[draw, fill=black] (10,0) circle (0.5);
\end{tikzpicture}\]
Then $\beta\sqcup\gamma$ is defined to be the map given by $\beta$
and $\gamma$ on the respective embedded punctured discs, and
by the identity elsewhere.
Then the unit, associativity and injectivity axioms are immediate.
The commutativity axiom holds when we take $\tau_{pq}$ 
to be the class of a diffeomorphism that interchanges the
two embedded discs, passing the left one above the right.  
The intersection axiom follows from the fact
that we may identify the subgroup $B_p\times B_{q+r}\subseteq B_{p+q+r}$
with the set of isotopy classes of diffeomorphisms that fix
an arc that cuts the disc in two,
separating the first $p$ punctures from the last $q+r$ punctures,
and similarly for $B_{p+q}\times B_r$ and $B_p\times B_q\times B_r$.
\end{example}

\section{The splitting poset}
\label{section-spn}

In this section we define the splitting posets associated to
a family of groups with multiplication, and identify
them in the case of symmetric groups, braid groups, general linear groups
of PIDs,
and automorphism groups of free groups.  Conditions on the connectivity
of these posets are the key assumptions in all of our main theorems.

\begin{definition}[The splitting poset]
Let $(G_p)_{p\geqslant 1}$ be a family of groups
with multiplication.
Then for $n\geqslant 2$, 
the $n$-th \emph{splitting poset} $\SP_n$ of $(G_p)_{p\geqslant 1}$
is defined to be the set
\[
	\SP_n=
	\frac{G_n}{G_1\times G_{n-1}}
	\sqcup
	\frac{G_n}{G_2\times G_{n-2}}
	\sqcup
	\cdots
	\sqcup
	\frac{G_n}{G_{n-2}\times G_{2}}
	\sqcup
	\frac{G_n}{G_{n-1}\times G_{1}}
\]
equipped with the partial ordering $\leqslant$
with respect to which
\[
	g(G_{p}\times G_{n-p})
	\leqslant
	h(G_q\times G_{n-q})
\]
if and only if 
$p\leqslant q$ and there is $k\in G_n$ such that 
\[
	g(G_p\times G_{n-p})=k(G_p\times G_{n-p})
	\ \text{ and }\ 
	h(G_q\times G_{n-q})=k(G_q\times G_{n-q}).
\]
Lemma~\ref{chain-lemma} below verifies that the relation
$\leqslant$ is transitive.
\end{definition}

\begin{lemma}\label{chain-lemma}
Given an arbitrary chain
\begin{equation}\label{chain1}
	g_0(G_{p_0}\times G_{n-p_0})
	\leqslant
	g_1(G_{p_1}\times G_{n-p_1})
	\leqslant
	\cdots
	\leqslant
	g_r(G_{p_r}\times G_{n-p_r})
\end{equation}
in $\SP_n$ we may assume, after possibly choosing new
coset representatives, that $g_0=\cdots=g_r$.
It follows that
$g_i(G_{p_i}\times G_{n-p_i})\leqslant g_j(G_{p_j}\times G_{n-p_j})$ 
for any $i\leqslant j$.
\end{lemma}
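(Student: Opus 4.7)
The plan is to prove the lemma by induction on $r$. The base case $r=1$ is the definition of $\leqslant$, which directly supplies an element $k$ representing both $g_0(G_{p_0}\times G_{n-p_0})$ and $g_1(G_{p_1}\times G_{n-p_1})$.

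For the inductive step, I would apply the induction hypothesis to the sub-chain $g_0\leqslant\cdots\leqslant g_{r-1}$ to obtain a common representative $g$ with $g_i(G_{p_i}\times G_{n-p_i})=g(G_{p_i}\times G_{n-p_i})$ for $i\leqslant r-1$. The last relation $g_{r-1}\leqslant g_r$ then supplies some $y\in g(G_{p_{r-1}}\times G_{n-p_{r-1}})\cap g_r(G_{p_r}\times G_{n-p_r})$, and I set $u:=g^{-1}y\in G_{p_{r-1}}\times G_{n-p_{r-1}}$. The problem reduces to producing a factorisation $u=hw$ with $h\in H:=\bigcap_{i=0}^{r-1}(G_{p_i}\times G_{n-p_i})$ and $w\in G_{p_r}\times G_{n-p_r}$: granted such a factorisation, $\tilde g:=gh$ will lie in $g_i(G_{p_i}\times G_{n-p_i})$ for $i\leqslant r-1$ (because $h\in H$) and satisfy $\tilde g=g(uw^{-1})=yw^{-1}\in y(G_{p_r}\times G_{n-p_r})=g_r(G_{p_r}\times G_{n-p_r})$, so that replacing every $g_i$ by $\tilde g$ will give the desired common representative.

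The main obstacle is therefore the set-theoretic inclusion
\[
	G_{p_{r-1}}\times G_{n-p_{r-1}}\;\subseteq\; H\cdot (G_{p_r}\times G_{n-p_r}).
\]
By iterated application of the intersection axiom, $H=G_{p_0}\times G_{p_1-p_0}\times\cdots\times G_{p_{r-1}-p_{r-2}}\times G_{n-p_{r-1}}$. Given $u\in G_{p_{r-1}}\times G_{n-p_{r-1}}$, I write $u=u_L\diamond u_R$ with $u_L\in G_{p_{r-1}}$ and $u_R\in G_{n-p_{r-1}}$, and set $h:=e_{p_{r-1}}\diamond u_R$ and $w:=u_L\diamond e_{n-p_{r-1}}$. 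Rewriting $e_{p_{r-1}}=e_{p_0}\diamond e_{p_1-p_0}\diamond\cdots\diamond e_{p_{r-1}-p_{r-2}}$ via associativity exhibits $h$ as an element of $H$, while rewriting $w=(u_L\diamond e_{p_r-p_{r-1}})\diamond e_{n-p_r}$ exhibits $w\in G_{p_r}\times G_{n-p_r}$. Because the factors in both products have matching sizes, the product map is compatible with $\diamond$, giving $hw=(e_{p_{r-1}}u_L)\diamond(u_R e_{n-p_{r-1}})=u_L\diamond u_R=u$. The final assertion in the lemma is then immediate: once a common representative $\tilde g$ is in hand, $\tilde g(G_{p_i}\times G_{n-p_i})\leqslant \tilde g(G_{p_j}\times G_{n-p_j})$ for all $i\leqslant j$ by taking $k=\tilde g$ in the definition of $\leqslant$.
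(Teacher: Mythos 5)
Your proof is correct and follows essentially the same route as the paper: induct along the chain, use the defining $k$ to make the last two representatives agree modulo $G_{p_{r-1}}\times G_{n-p_{r-1}}$, and then split the correcting element as $u_L\diamond u_R$, absorbing $e_{p_{r-1}}\diamond u_R$ into all earlier subgroups and $u_L\diamond e_{n-p_{r-1}}$ into $G_{p_r}\times G_{n-p_r}$ --- exactly the paper's manipulation with $\gamma$ and $\delta$, repackaged as the factorisation $u=hw$. The appeal to the intersection axiom to identify $H$ is harmless but not needed, since you only use the easy inclusion $G_{p_0}\times G_{p_1-p_0}\times\cdots\times G_{n-p_{r-1}}\subseteq H$, which follows from associativity alone.
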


\begin{proof}
We prove by induction on $s=1,2,\ldots,r$ that given an arbitrary chain
\eqref{chain1} we may assume, after choosing new coset representatives,
that $g_0=\cdots=g_s=g$ for some $g\in G_n$, 
the case $s=r$ being our desired result.

When $s=1$, the claim is immediate from the definition of $\leqslant$.

For the induction step, suppose that the claim holds for $s$.
Take an arbitrary chain~\eqref{chain1} and use the induction hypothesis
to choose new coset representatives so that $g_0=\cdots=g_s=g$.
Since
$g(G_{p_s}\times G_{n-p_s})\leqslant g_{s+1}(G_{p_{s+1}}\times G_{n-p_{s+1}})$
we may assume, after replacing $g_{s+1}$ if necessary, that
$g (G_{p_s}\times G_{n-p_s})=g_{s+1}(G_{p_s}\times G_{n-p_s})$.
Then there are $\gamma\in G_{p_s}$ and $\delta\in G_{n-p_s}$
such that $g^{-1}g_{s+1}=\gamma\diamond\delta$.
Since $e_{p_s}\diamond \delta$ lies in $G_{p_t}\times G_{n-p_t}$
for $t\leqslant s$, we may replace $g$ with $g(e_{p_s}\diamond\delta)$.
And since $\gamma\diamond e_{n-p_s}$ lies in $G_{p_{s+1}}\times G_{n-p_{s+1}}$,
we may replace $g_{s+1}$ with $g_{s+1}(\gamma_{p_s}^{-1}\diamond e_{n-p_s})$.
But then $g_{s+1}=g$. 
So $g_0=\cdots=g_{s+1}$ as required.
\end{proof}

Now we will identify the splitting posets associated to the symmetric groups,
general linear groups of PIDs, 
automorphism groups of free groups, and braid groups.

\begin{proposition}[Splitting posets for symmetric groups]
\label{proposition-spn-sigman}
For the family of groups with multiplication $(\Sigma_p)_{p\geqslant 0}$,
the $n$-th splitting poset $\SP_n$ is isomorphic to the
poset of proper subsets of $\{1,\ldots,n\}$ under inclusion.
\end{proposition}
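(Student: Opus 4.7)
My plan is to construct an explicit isomorphism of posets
\[
	\Phi\colon \SP_n\longrightarrow\mathcal{P},
	\qquad
	g(\Sigma_p\times\Sigma_{n-p})\longmapsto g(\{1,\ldots,p\}),
\]
where $\mathcal{P}$ denotes the poset of proper subsets of $\{1,\ldots,n\}$
ordered by inclusion. The guiding observation is that under the
juxtaposition product $f\sqcup g$, the subgroup
$\Sigma_p\times\Sigma_{n-p}$ is precisely the stabiliser of
$\{1,\ldots,p\}\subset\{1,\ldots,n\}$. This makes $\Phi$ well-defined
on cosets, and it is a standard orbit-stabiliser observation that $\Phi$
restricts to a bijection between $\Sigma_n/(\Sigma_p\times\Sigma_{n-p})$
and the set of $p$-element subsets of $\{1,\ldots,n\}$; combining these
over $1\leqslant p\leqslant n-1$ gives a bijection $\Phi$ onto the
proper subsets.

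Next I would check that $\Phi$ is an isomorphism of posets. For the
forward direction, suppose $g(\Sigma_p\times\Sigma_{n-p})\leqslant
h(\Sigma_q\times\Sigma_{n-q})$ in $\SP_n$. Using
Lemma~\ref{chain-lemma} I can replace $g$ and $h$ by a common
representative $k\in\Sigma_n$, whereupon
$\Phi(g(\Sigma_p\times\Sigma_{n-p}))=k(\{1,\ldots,p\})$ and
$\Phi(h(\Sigma_q\times\Sigma_{n-q}))=k(\{1,\ldots,q\})$, and the
inclusion $\{1,\ldots,p\}\subseteq\{1,\ldots,q\}$ is preserved by $k$.

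For the reverse direction, suppose $S\subseteq T$ are proper subsets
with $|S|=p\leqslant q=|T|$. I need to produce $k\in\Sigma_n$ such that
$k(\{1,\ldots,p\})=S$ and $k(\{1,\ldots,q\})=T$. This is just a matter
of choosing $k$ to send $\{1,\ldots,p\}$ bijectively to $S$,
$\{p+1,\ldots,q\}$ bijectively to $T\setminus S$, and
$\{q+1,\ldots,n\}$ bijectively to $\{1,\ldots,n\}\setminus T$. With
this $k$, the defining condition for
$g(\Sigma_p\times\Sigma_{n-p})\leqslant h(\Sigma_q\times\Sigma_{n-q})$
is satisfied.

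There is no real obstacle here: the entire argument is an unwinding of
the definitions, with Lemma~\ref{chain-lemma} doing the small piece of
work needed to pass to a common representative. The only point
requiring a moment of care is the verification that
$\Sigma_p\times\Sigma_{n-p}$ is exactly the stabiliser of
$\{1,\ldots,p\}$ (which is immediate from the block structure of
$f\sqcup g$) and the explicit construction of $k$ in the reverse
implication.
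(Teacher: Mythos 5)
Your proposal is correct and is essentially the paper's own argument: the paper defines the same map $g(\Sigma_p\times\Sigma_{n-p})\mapsto\{g(1),\ldots,g(p)\}$ and verifies both directions of order-preservation by passing to a common coset representative, just as you do. The only cosmetic difference is that you invoke Lemma~\ref{chain-lemma} and an explicit choice of $k$ where the paper adjusts $g$ and $h$ by elements of the block subgroups, which amounts to the same thing.
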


\begin{proof}
We define a bijection $\phi$ from $\SP_n$ to the poset
of proper subsets of $\{1,\ldots,n\}$ by the rule
\[
	\phi\left(
		g(\Sigma_p\times\Sigma_{n-p})
	\right)
	=
	\{g(1),\ldots,g(p)\}.
\]
This $\phi$ is a well-defined bijection, and we must show that
\[
	g(\Sigma_p\times\Sigma_{n-p})\leqslant h(\Sigma_q\times\Sigma_{n-q})
	\iff
	\{g(1),\ldots,g(p)\}\subseteq\{h(1),\ldots,h(q)\}.
\]
If the first condition holds then $p\leqslant q$ and we may assume that $g=h$, 
so that the second condition follows immediately.
If the second condition holds
then $p\leqslant q$ and, replacing $h$ by $h\circ(k\times \mathrm{Id})$ 
and $g$ by $g\circ (\mathrm{Id}\times l)$
for an appropriate $k\in\Sigma_q$ and $l\in\Sigma_{n-p}$, 
we may assume that $g=h$, so that the first condition holds.
\end{proof}

Let $R$ be a PID.
To identify the splitting posets associated to the family
$(\GL_p(R))_{p\geqslant 0}$, recall that 
Charney in~\cite{Charney} defined $S_R(R^n)$
to be the poset of ordered pairs 
$(P,Q)$ of proper submodules of $R^n$ satisfying $P\oplus Q=R^n$,
equipped with the partial order $\leqslant$ defined by
\[
	(P,Q)\leqslant (P',Q') \iff P\subseteq P'\text{ and }Q\supseteq Q'.
\]
Charney then defined the \emph{split building} of $R^n$,
denoted by $[R^n]$, to be the realisation $|S_R(R^n)|$.
(Note that Charney worked with arbitrary Dedekind domains.)

\begin{proposition}[Splitting posets for general linear groups of PIDs]
\label{proposition-spn-glnr}
Let $R$ be a PID.
For the family of groups with multiplication
$(\GL_n(R))_{n\geqslant 0}$, the splitting poset $\SP_n$ 
is isomorphic to $S_R(R^n)$, so that $|\SP_n|$ is isomorphic
to the split building $[R^n]$.  
\end{proposition}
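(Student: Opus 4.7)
The plan is to exhibit an explicit poset isomorphism $\phi\colon \SP_n\to S_R(R^n)$. Given a coset $g(\GL_p(R)\times\GL_{n-p}(R))$, let $R^p\subseteq R^n$ denote the span of the first $p$ standard basis vectors and $R^{n-p}\subseteq R^n$ the span of the last $n-p$, and set
\[
\phi\bigl(g(\GL_p(R)\times\GL_{n-p}(R))\bigr)
= (g\cdot R^p,\ g\cdot R^{n-p}).
\]
Well-definedness is immediate: replacing $g$ by $g\cdot(A\oplus B)$ with $(A,B)\in\GL_p(R)\times\GL_{n-p}(R)$ leaves $g\cdot R^p$ and $g\cdot R^{n-p}$ unchanged, since $A$ and $B$ are automorphisms of $R^p$ and $R^{n-p}$ respectively.

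Next I would verify bijectivity. Injectivity is straightforward: if $(g\cdot R^p,g\cdot R^{n-p})=(h\cdot R^q,h\cdot R^{n-q})$, then comparing ranks forces $p=q$, and the element $h^{-1}g$ preserves both $R^p$ and $R^{n-p}$, hence lies in $\GL_p(R)\times\GL_{n-p}(R)$. For surjectivity, given $(P,Q)\in S_R(R^n)$ with $P\oplus Q=R^n$, use the PID hypothesis: $P$ and $Q$, as submodules of the free $R$-module $R^n$, are themselves free, and the concatenation of any bases of $P$ and $Q$ is a basis of $R^n$. Arranging the basis of $P$ first yields an element $g\in\GL_n(R)$ with $g\cdot R^p=P$ and $g\cdot R^{n-p}=Q$, where $p=\rank P$.

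The core of the argument is showing that $\phi$ and $\phi^{-1}$ are order-preserving. The forward direction is easy: given $g(\GL_p\times\GL_{n-p})\leqslant h(\GL_q\times\GL_{n-q})$, apply Lemma~\ref{chain-lemma} to assume $g=h=k$, whereupon the inclusions $k\cdot R^p\subseteq k\cdot R^q$ and $k\cdot R^{n-q}\subseteq k\cdot R^{n-p}$ follow from the analogous inclusions of coordinate summands. The step I expect to be the main obstacle is the converse: given $(P,Q)\leqslant(P',Q')$ in $S_R(R^n)$, produce a single $k\in\GL_n(R)$ that simultaneously realises both cosets. I would handle this as follows. Set $M=P'\cap Q$; using $P\oplus Q=R^n=P'\oplus Q'$ together with $P\subseteq P'$ and $Q'\subseteq Q$, one checks the direct sum decompositions $P'=P\oplus M$ and $Q=M\oplus Q'$, so that $R^n=P\oplus M\oplus Q'$. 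Each summand is a submodule of $R^n$ and hence free by the PID hypothesis; concatenating bases of $P$, $M$, $Q'$ yields a basis of $R^n$ whose associated matrix $k\in\GL_n(R)$ satisfies $k\cdot R^p=P$, $k\cdot R^{n-p}=Q$, $k\cdot R^q=P'$ and $k\cdot R^{n-q}=Q'$. This exhibits the desired relation in $\SP_n$ with common representative $k$. The final assertion that $|\SP_n|\cong[R^n]$ is then immediate from Charney's definition $[R^n]=|S_R(R^n)|$.
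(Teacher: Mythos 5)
Your proof is correct and is essentially the paper's argument in explicit form: the map $g(\GL_p(R)\times\GL_{n-p}(R))\mapsto(g\cdot R^p,g\cdot R^{n-p})$ is exactly the isomorphism the paper produces via orbit representatives and stabilisers, and both arguments use the PID hypothesis in the same two places, namely freeness of $P$ and $Q$ to build an element of $\GL_n(R)$ from concatenated bases, and the decomposition $R^n=P\oplus(P'\cap Q)\oplus Q'$ to find a single $k$ realising both cosets when $(P,Q)\leqslant(P',Q')$. The only difference is presentational (explicit bijection versus the paper's three orbit/stabiliser properties), so no further comment is needed.
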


\begin{proof}
Define $s_1,\ldots,s_{n-1}\in\SP_n$ and $t_1,\ldots,t_{n-1}\in S_R(R^n)$ by
\[s_p=e_n(\GL_p(R)\times\GL_{n-p}(R)),
\qquad
t_p=(\mathrm{span}(x_1,\ldots,x_p),\mathrm{span}(x_{p+1},\ldots,x_n)),\]
where $e_n\in\GL_n(R)$ denotes the identity element and
$x_1,\ldots,x_n$ is the standard basis of $R^n$.
Then the following three properties hold for the elements
$s_i\in\SP_n$, and their analogues hold for the $t_i\in S_R(R^n)$.
\begin{enumerate}
	\item
	\label{ooone}
	$s_1,\ldots,s_{n-1}$ are a complete set of orbit representatives
	for the $\GL_n(R)$ action on $\SP_n$.

	\item
	\label{ootwo}
	The stabiliser of $s_p$ is $\GL_p(R)\times\GL_{n-p}(R)$.
	
	\item
	\label{oothree}
	$x\leqslant y$ if and only if there is $g\in \GL_n(R)$ such that
	$x=g\cdot s_p$ and $y=g\cdot s_q$ where $p\leqslant q$.
\end{enumerate}
It follows immediately that there is a unique isomorphism of posets
$\SP_n\to S_R(R^n)$ satisfying $s_i\mapsto t_i$ for all $i$. 

The three properties hold for $s_i\in\SP_n$ by definition.
We prove them for $t_i\in S_R(R^n)$ as follows.
For \eqref{ooone}, the fact that $R$ is a PID guarantees that
if $(P,Q)\in S_R(R^n)$ then $P$ and $Q$ are free, of ranks $p$ and $q$ say, 
such that $p+q=n$.  If we choose bases of $P$ and $Q$ and concatenate
them to form an element $A\in\GL_n(R)$, then $A\cdot t_p = (P,Q)$
as required.
Property~\eqref{ootwo} is immediate.
For~\eqref{oothree}, suppose that $(P,Q)\leqslant (P',Q')$
and let $p=\mathrm{rank}(P)$ and $p'=\mathrm{rank}(P')$,
so that $p\leqslant p'$.
Then
\[
	R^n = P\oplus (P'\cap Q)\oplus Q',
	\qquad
	P\oplus (P'\cap Q) = P',
	\qquad
	(P'\cap Q)\oplus Q' = Q.
\]
Let $g$ denote the element of $\GL_n(R)$ whose columns are
given by a basis of $P$, followed by a basis of $(P'\cap Q)$, 
followed by a basis of $Q'$.  
Again this is possible since $R$ is a PID.
Then $(P,Q)=g\cdot t_p$ and $(P',Q')=g\cdot t_{p'}$
where $p\leqslant p'$, as required.
\end{proof}

Let us now identify the splitting posets for automorphism groups
of free groups.  The situation is closely analogous to that
for general linear groups.  
Define $S(F_n)$, for each $n\geqslant 2$, 
to be the poset of ordered pairs $(P,Q)$ of 
proper subgroups of $F_n$ satisfying $P\ast Q = F_n$.
It is equipped with the partial order under which
$(P,Q)\leqslant (P',Q')$ if and only if 
$(P,Q)=(J_0,J_1\ast J_2)$ and $(P',Q')=(J_0\ast J_1,J_2)$
for some proper subgroups $J_0,J_1,J_2$ of $F_n$ satisfying
$J_0\ast J_1\ast J_2 = F_n$.
(Note that the condition in the definition of $\leqslant$
is stronger than assuming that $P\subseteq P'$ and $Q'\supseteq Q$).
The proof of the following proposition
is similar to that of Proposition~\ref{proposition-spn-glnr},
and we leave the details to the reader.

\begin{proposition}[Splitting posets for automorphism groups
of free groups]
\label{proposition-spn-autfn}
For the family of groups with multiplication
$(\Aut(F_n))_{n\geqslant 0}$, the splitting poset $\SP_n$ 
is isomorphic to $S(F_n)$.  
\end{proposition}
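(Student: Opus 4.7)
The plan is to mimic the proof of Proposition~\ref{proposition-spn-glnr} exactly, replacing direct sums of free modules by free products of free groups and invoking Grushko's theorem where the PID structure was used before. Concretely, I would define the basepoints
\[
	s_p = e_n\cdot(\Aut(F_p)\times\Aut(F_{n-p}))\in\SP_n,
	\qquad
	t_p = \bigl(\langle x_1,\ldots,x_p\rangle,\ \langle x_{p+1},\ldots,x_n\rangle\bigr)\in S(F_n),
\]
for $p=1,\ldots,n-1$, where $x_1,\ldots,x_n$ are the standard free generators of $F_n$, and then verify the three properties~\eqref{ooone}--\eqref{oothree} for the $t_p$ in $S(F_n)$. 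The $s_p$ satisfy these by definition, so obtaining the same three properties on the $S(F_n)$-side will produce a unique poset isomorphism $\SP_n\to S(F_n)$ sending $s_p\mapsto t_p$.

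For property~\eqref{ooone}, given $(P,Q)\in S(F_n)$, Grushko's theorem (applied to the free product $F_n=P\ast Q$) guarantees that $P$ and $Q$ are finitely generated free groups with $\rank(P)+\rank(Q)=n$. Choosing a free basis of $P$ of rank $p$ and of $Q$ of rank $n-p$ and concatenating them gives an endomorphism $g\colon F_n\to F_n$ sending $(x_1,\ldots,x_n)$ to this list; since $P\ast Q=F_n$, this $g$ is an automorphism with $g\cdot t_p=(P,Q)$. Property~\eqref{ootwo} follows because any automorphism fixing the pair of free factors $t_p$ must restrict to automorphisms of each factor, so its stabiliser is precisely $\Aut(F_p)\times\Aut(F_{n-p})$.

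Property~\eqref{oothree} is the step that directly uses the definition of the order on $S(F_n)$. If $(P,Q)\leqslant (P',Q')$, by definition there exist proper subgroups $J_0,J_1,J_2$ with $J_0\ast J_1\ast J_2=F_n$, $(P,Q)=(J_0,J_1\ast J_2)$ and $(P',Q')=(J_0\ast J_1,J_2)$. Applying Grushko once more to the threefold free product, each $J_i$ is finitely generated and free, with $\sum\rank(J_i)=n$; let $p=\rank(J_0)$ and $p'=\rank(J_0)+\rank(J_1)$. Concatenating chosen bases of $J_0$, $J_1$, $J_2$ produces an automorphism $g\in\Aut(F_n)$ with $g\cdot t_p=(P,Q)$ and $g\cdot t_{p'}=(P',Q')$, and $p\leqslant p'$. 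The converse direction is immediate from the definition of $\leqslant$ by taking $J_0=g\langle x_1,\ldots,x_p\rangle$, $J_1=g\langle x_{p+1},\ldots,x_{p'}\rangle$, $J_2=g\langle x_{p'+1},\ldots,x_n\rangle$.

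The only genuinely non-routine ingredient is the appeal to Grushko's theorem to ensure that each $P$, $Q$ appearing in an element of $S(F_n)$ is automatically a finitely generated free group with the expected total rank; without this one cannot match the rank indices $p$ of the $\SP_n$-side with the pieces of a free factorisation. Once this is in hand, the verification of properties~\eqref{ooone}--\eqref{oothree} is strictly parallel to the PID case, and the isomorphism $\SP_n\cong S(F_n)$ follows.
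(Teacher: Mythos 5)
Your proof is correct and is exactly the argument the paper intends: the paper states that the proof of Proposition~\ref{proposition-spn-autfn} is similar to that of Proposition~\ref{proposition-spn-glnr} and leaves the details to the reader, and your verification of properties~\eqref{ooone}--\eqref{oothree} for the $t_p$ (with Nielsen--Schreier/Grushko supplying freeness and rank additivity in place of the PID hypothesis, and the triples $J_0,J_1,J_2$ from the definition of $\leqslant$ on $S(F_n)$ playing the role of $P$, $P'\cap Q$, $Q'$) is precisely that omitted argument.
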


Let us now identify the splitting posets associated to the
family $(B_p)_{p\geqslant 0}$ of braid groups.  
See Example~\ref{example-braid} for the relevant notation.
Given $n\geqslant 2$, let us
define a poset $\A_n$ as follows.  The elements of $\A_n$
are the arcs embedded in $D^2\setminus X_n$,
starting at the `north pole' of the disc and ending at the `south pole',
such that $X_n$ meets both components of their complement,
all taken modulo isotopies in $D^2\setminus X_n$ that preserve the endpoints.
\[\begin{tikzpicture}[scale=0.1]
	\path[draw, line width=1.5, fill=white!80!black] (0,0) circle (15);	
	\path[draw, fill=black] (-10,0) circle (0.5);
	\path[draw, fill=black] (-5,0) circle (0.5);
	\path[draw, fill=black] (-0,0) circle (0.5);
	\path[draw, fill=black] (5,0) circle (0.5);
	\path[draw, fill=black] (10,0) circle (0.5);
	\path[draw, fill=black] (0,15) circle (0.5);
	\path[draw, fill=black] (0,-15) circle (0.5);
	\path[draw] (0,15) .. controls (4,5) and (5,-5) .. (0,-5)
		.. controls (-10,-5) and (-5,5 ) .. (-10,5)
		.. controls (-10,5) and (-12.5,5) .. (-12.5,0)
		.. controls (-12.5,-5) and (-5,-10) .. (0,-15);
	\node at (-10,7) {$\alpha$};
\end{tikzpicture}\]
Given $\alpha,\beta\in\A_n$, we say that $\alpha\leqslant\beta$
if $\alpha$ and $\beta$ have representatives $a$ and $b$
that meet only at their endpoints, and such that
$a$ lies `to the left' of $b$.  (More precisely, $a$ and $b$ must 
meet the north pole in anticlockwise order and the south pole
in clockwise order.)
\[\begin{tikzpicture}[scale=0.1]
	\path[draw, line width=1.5, fill=white!80!black] (0,0) circle (15);	
	\path[draw, fill=black] (-10,0) circle (0.5);
	\path[draw, fill=black] (-5,0) circle (0.5);
	\path[draw, fill=black] (-0,0) circle (0.5);
	\path[draw, fill=black] (5,0) circle (0.5);
	\path[draw, fill=black] (10,0) circle (0.5);
	\path[draw, fill=black] (0,15) circle (0.5);
	\path[draw, fill=black] (0,-15) circle (0.5);
	\path[draw] (0,15) .. controls (4,5) and (5,-5) .. (0,-5)
		.. controls (-10,-5) and (-5,5 ) .. (-10,5)
		.. controls (-10,5) and (-12.5,5) .. (-12.5,0)
		.. controls (-12.5,-5) and (-5,-10) .. (0,-15);
	\path[draw] (0,15) .. controls (5,10) and (7.5,5).. (7.5,0)
		.. controls (7.5,-5) and (5,-10) .. (0,-15);
	\node at (-10,7) {$\alpha$};
	\node at (7,-10) {$\beta$};
\end{tikzpicture}\]
Again, the proof of the following is similar to that of
Proposition~\ref{proposition-spn-glnr}, and we leave it
to the reader to provide details if they wish.

\begin{proposition}[Splitting posets for braid groups]
For the family of groups with multiplication $(B_p)_{p\geqslant 0}$,
we have $\SP_n\cong \A_n$.
\end{proposition}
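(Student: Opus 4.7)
The plan is to mirror the proof of Proposition~\ref{proposition-spn-glnr} almost verbatim, replacing the role of $R^n$ and its submodule decompositions with the disc $D^2\setminus X_n$ and its decompositions along arcs. I would first define standard elements $s_p=e_n(B_p\times B_{n-p})\in\SP_n$ and $t_p\in\A_n$, where $t_p$ is the isotopy class of the straight vertical arc from the north pole to the south pole that leaves the first $p$ punctures of $X_n$ strictly to its left and the last $n-p$ strictly to its right, for $p=1,\ldots,n-1$. The goal is then to verify the same three properties (\ref{ooone})--(\ref{oothree}) from the proof of Proposition~\ref{proposition-spn-glnr}, now with the $t_p$ in place of the $t_p$ there; an isomorphism $\SP_n\to\A_n$ with $s_p\mapsto t_p$ then follows by exactly the same abstract argument.

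For property (\ref{ooone}), I would invoke the change-of-coordinates principle for arcs in a punctured disc: any embedded arc $\alpha\in\A_n$ separates $D^2\setminus X_n$ into two pieces, and once we know how many punctures lie on each side, there is a diffeomorphism of $(D^2,X_n)$ fixing $\partial D^2$ pointwise that carries $\alpha$ to one of the standard vertical arcs $t_p$. This gives transitivity of the $B_n$-action on arcs of each type and shows the $t_p$ are complete orbit representatives. For property (\ref{ootwo}), the intersection-axiom discussion in Example~\ref{example-braid} already identifies $B_p\times B_{n-p}$ with the stabiliser of a separating arc in the mapping class group, so the stabiliser of $t_p$ is precisely $B_p\times B_{n-p}$.

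The real work is in property (\ref{oothree}), the order relation. Here I need that $\alpha\leqslant\beta$ in $\A_n$ if and only if there exists $g\in B_n$ and $p\leqslant q$ with $\alpha=g\cdot t_p$ and $\beta=g\cdot t_q$. The forward direction requires a \emph{relative} change-of-coordinates statement: given two arcs $\alpha,\beta$ with disjoint representatives $a,b$ meeting only at the poles and with $a$ to the left of $b$, one chooses an isotopy to make $a\cup b$ into a union of two straight arcs bounding a region that contains exactly the middle $q-p$ punctures, and then extends this to a diffeomorphism of $(D^2,X_n)$ carrying the pair $(a,b)$ to $(t_p,t_q)$; this $g$ is the required element of $B_n$. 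The reverse direction is immediate since $t_p$ and $t_q$ themselves admit disjoint representatives in the required left-to-right position, and the $B_n$-action preserves this property.

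The main obstacle is therefore the relative change-of-coordinates in property (\ref{oothree}): although intuitively clear from the picture, a clean proof requires one to cut $D^2\setminus(a\cup b)$ into the three regions determined by $a$ and $b$, apply change-of-coordinates inside each region to standardise the puncture positions, and then glue the resulting diffeomorphisms along $a$ and $b$. Once this has been carried out, the three properties hold for $\A_n$ in exactly the same formal way that they hold for $S_R(R^n)$ in Proposition~\ref{proposition-spn-glnr}, and the isomorphism $\SP_n\cong\A_n$ drops out.
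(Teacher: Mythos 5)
Your proposal is correct and follows exactly the route the paper intends: the paper leaves this proof to the reader, stating only that it is similar to Proposition~\ref{proposition-spn-glnr}, and your verification of properties (\ref{ooone})--(\ref{oothree}) via change of coordinates for arcs (absolute for orbits, relative for the order relation) together with the stabiliser identification from Example~\ref{example-braid} is precisely the intended argument. No gaps to report.
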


\section{Examples of connectivity of $|\SP_n|$}
\label{section-connectivity}

Our Theorems~\ref{theorem-stability}, \ref{theorem-kernel}
and~\ref{theorem-realisation} apply to a family of groups with
multiplication only when the associated splitting posets
satisfy the connectivity condition that each $|\SP_n|$ is $(n-3)$-connected.
In this section we verify this condition for our main examples:
symmetric groups, where the result is elementary; 
general linear groups of PIDs, where the result
was proved by Charney in~\cite{Charney}; automorphism groups
of free groups, where we make use of Hatcher and Vogtmann's
result on the connectivity of the 
\emph{poset of free factorisations of $F_n$}
in~\cite{HatcherVogtmannCerf}; and for braid groups, where the claim
is a variant of known results on arc complexes.
 
Let us fix our definitions and notation for realisations of posets.
If $P$ is a poset, then its \emph{order complex}
(or \emph{flag complex} or \emph{derived complex}) $\Delta(P)$ 
is the abstract simplicial complex 
whose vertices are the elements of $P$, and in which vertices
$p_0,\ldots,p_r$ span an $r$-simplex if 
they form a chain $p_0<\cdots<p_r$ after possibly reordering.
The \emph{realisation} $|P|$ of $P$ is then defined 
to be the realisation $|\Delta(P)|$ of $\Delta(P)$.
We will usually not distinguish between a simplicial complex
and its realisation.  So if $P$ is a poset, then the simplicial complex
$|P|$ and topological space $|(|P|)|$ will both be denoted
by $|P|$.  
When we discuss topological properties of a poset or
of a simplicial complex, we are referring to the topological properties
of its realisation as a topological space.

\subsection{Symmetric groups}

The result for symmetric groups is elementary.

\begin{proposition}[Connectivity of $|\SP_n|$ for symmetric groups]
For the family of groups with multiplication $(\Sigma_p)_{p\geqslant 0}$
we have $|\SP_n|\cong S^{n-2}$, and in particular
$|\SP_n|$ is $(n-3)$-connected.
\end{proposition}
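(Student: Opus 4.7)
The plan is to identify $|\SP_n|$ with the barycentric subdivision of the boundary of the standard $(n-1)$-simplex, which is homeomorphic to $S^{n-2}$.

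By Proposition \ref{proposition-spn-sigman}, the poset $\SP_n$ is isomorphic to the poset of nonempty proper subsets of $\{1,\ldots,n\}$ ordered by inclusion: the bijection $\phi$ there produces subsets of cardinality $p\in\{1,\ldots,n-1\}$, and the stated characterisation of $\leqslant$ reduces to set-theoretic inclusion. Now let $\Delta^{n-1}$ be the standard geometric $(n-1)$-simplex with vertex set $\{1,\ldots,n\}$; its nonempty faces are in inclusion-preserving bijection with the nonempty subsets of $\{1,\ldots,n\}$, and the proper faces form the boundary $\partial\Delta^{n-1}$. By definition, the barycentric subdivision of $\partial\Delta^{n-1}$ has one vertex for each proper face of $\Delta^{n-1}$ and one $r$-simplex for each chain of $r+1$ such faces under inclusion, so it coincides with the order complex $\Delta(\SP_n)$ under the identification of the previous sentence. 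Since barycentric subdivision is a homeomorphism on geometric realisations and $|\partial\Delta^{n-1}|\cong S^{n-2}$, we conclude that $|\SP_n|\cong S^{n-2}$, and the $(n-3)$-connectivity is then immediate from the standard connectivity of spheres.

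There is no genuine obstacle here; the argument is a routine identification of an order complex with a barycentric subdivision. The one bookkeeping point worth flagging is that the definition of $\SP_n$ restricts the indices to $1\leqslant p\leqslant n-1$, which is exactly what excludes the improper subsets $\emptyset$ and $\{1,\ldots,n\}$ from the image of $\phi$; without this exclusion one would recover the barycentric subdivision of the full simplex, a contractible space, rather than a sphere.
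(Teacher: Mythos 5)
Your argument is correct and is essentially the paper's own proof: both identify $\SP_n$ (via Proposition~\ref{proposition-spn-sigman}) with the poset of nonempty proper subsets of $\{1,\ldots,n\}$, i.e.\ the face poset of $\partial\Delta^{n-1}$, whose order complex is the barycentric subdivision of $\partial\Delta^{n-1}\cong S^{n-2}$. Your remark about excluding $\emptyset$ and $\{1,\ldots,n\}$ is a sensible clarification of the paper's phrase ``proper subsets,'' but it does not change the substance.
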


\begin{proof}
Let $\partial\Delta^{n-1}$ denote the simplicial complex
given by the boundary of the simplex with vertices $1,\ldots,n$.
Then the face poset $\calF(\partial\Delta^{n-1})$ of $\partial\Delta^{n-1}$
is exactly the poset of proper subsets of $\{1,\ldots,n\}$
ordered by inclusion.  But we saw in 
Proposition~\ref{proposition-spn-sigman} 
that the latter is isomorphic to $\SP_n$.
Thus $|\SP_n|\cong|\calF(\partial\Delta^{n-1})|\cong|\partial\Delta^{n-1}|
\cong S^{n-2}$ as required.
\end{proof}

\subsection{General linear groups of PIDs}

Let $R$ be a PID.  In Proposition~\ref{proposition-spn-glnr}
we saw that for the family of groups with multiplication
$(\GL_p(R))_{p\geqslant 0}$ there is an isomorphism
$\SP_n\cong S_R(R^n)$, where $S_R(R^n)$ is the poset whose realisation
is the split building $[R^n]$.
Since $R$ is in particular a Dedekind domain,
Theorem~1.1 of~\cite{Charney} shows that 
$[R^n]$ has the homotopy type of a wedge of $(n-2)$-spheres.
So we immediately obtain the following.

\begin{proposition}[Connectivity of $|\SP_n|$ for general linear groups
of PIDs]
Let $R$ be a PID.
For the family of groups with multiplication
$(\GL_p(R))_{p\geqslant 0}$, and for any $n\geqslant 2$,
$|\SP_n|$ has the homotopy type of a wedge of $(n-2)$-spheres,
and in particular is $(n-3)$-connected.
\end{proposition}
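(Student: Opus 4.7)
The proof is essentially a combination of the identification from the previous section with Charney's connectivity theorem. First, I would invoke Proposition~\ref{proposition-spn-glnr}, which established an isomorphism of posets $\SP_n \cong S_R(R^n)$, and consequently a homeomorphism $|\SP_n| \cong [R^n]$ with the split building studied by Charney. This reduces the statement entirely to a fact about $[R^n]$.

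Next, I would note that every PID is a Dedekind domain (indeed, PIDs are exactly the Dedekind domains whose class group is trivial). Charney's Theorem~1.1 of~\cite{Charney} is stated in the generality of Dedekind domains and asserts that $[R^n]$ has the homotopy type of a wedge of $(n-2)$-spheres. Applying this to our PID $R$ immediately gives the homotopy type statement for $|\SP_n|$.

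Finally, since a wedge of $(n-2)$-spheres is $(n-3)$-connected (each summand is simply connected for $n-2 \geqslant 2$, and lower connectivity is automatic, while for $n = 2$ the wedge of $0$-spheres is nonempty so it is $(-1)$-connected), the connectivity conclusion required to apply Theorems~\ref{theorem-stability}, \ref{theorem-kernel} and~\ref{theorem-realisation} follows at once.

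There is no genuine obstacle here, since all the hard work has been done elsewhere: the combinatorial identification is Proposition~\ref{proposition-spn-glnr}, and the connectivity itself is Charney's theorem. The only thing to be careful about is to observe that our hypothesis (PID) is sufficient to invoke Charney's result (Dedekind domain), which is immediate.
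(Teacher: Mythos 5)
Your proposal is correct and is essentially the paper's own argument: the paper likewise combines the identification $|\SP_n|\cong[R^n]$ from Proposition~\ref{proposition-spn-glnr} with Charney's Theorem~1.1 for Dedekind domains (of which a PID is a special case) to conclude that $|\SP_n|$ is a wedge of $(n-2)$-spheres, hence $(n-3)$-connected. Nothing is missing.
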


\subsection{Automorphism groups of free groups}
Now we give the proof of the connectivity condition on the splitting
posets for automorphism groups of free groups.  This is the most
involved of our connectivity proofs.

\begin{definition}
Let $F$ be a free group of finite rank.
Define $P(F)$ to be the poset of \emph{ordered}
tuples $H=(H_0,\ldots,H_r)$ of proper subgroups
of $F$ such that $r\geqslant 1$ and $H_0\ast\cdots\ast H_r=F$.
It is equipped with the partial order in which $H\geqslant K$
if $K$ can be obtained by repeatedly amalgamating \emph{adjacent}
entries of $H$.
\end{definition}

\begin{theorem}\label{pf-theorem}
If $F$ has rank $n$,
then $|P(F)|$ has the homotopy type of a wedge of $S^{n-2}$-spheres.
\end{theorem}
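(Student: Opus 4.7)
The plan is to deduce this theorem from the connectivity result of Hatcher and Vogtmann on the poset of (unordered) free factorisations of $F$ proved in \cite{HatcherVogtmannCerf}. Let $\mathit{FF}(F)$ denote this Hatcher--Vogtmann poset, whose elements are unordered tuples $\{K_0,\ldots,K_r\}$ of proper subgroups of $F$ with $K_0\ast\cdots\ast K_r = F$, ordered by refinement; their theorem asserts that $|\mathit{FF}(F)|$ has the homotopy type of a wedge of $(n-2)$-spheres. I plan to transfer this conclusion to $|P(F)|$ via the natural order-preserving forgetful map
\[
	\pi\colon P(F) \longrightarrow \mathit{FF}(F),\qquad (H_0,\ldots,H_r)\longmapsto \{H_0,\ldots,H_r\},
\]
using Quillen's Theorem A: if the comma poset $\pi/K$ has contractible realisation for every $K\in \mathit{FF}(F)$, then $|\pi|$ is a weak equivalence and the theorem follows immediately.

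The base case $n=2$ is straightforward: $P(F_2)$ is a discrete set of points, since in rank two no free factorisation into more than two non-trivial proper factors is possible, and any discrete set is a wedge of $0$-spheres. For the inductive step we must analyse the comma fibers. Given $K=\{K_0,\ldots,K_r\}$, the poset $\pi/K$ consists of ordered tuples whose underlying unordered set is a coarsening of $K$. Its maximal elements are the $(r+1)!$ orderings of $K$ itself, and the subposet of $\pi/K$ lying below any one such ordering is the face poset of an $r$-simplex of adjacent amalgamations, hence contractible.

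The main obstacle is showing that these $(r+1)!$ contractible subposets glue along their common coarsenings into a contractible whole. I plan to address this via a Nerve Lemma argument applied to the cover of $|\pi/K|$ by the closed-star neighbourhoods of the individual orderings, verifying that all non-empty intersections of stars are contractible; alternatively one can proceed by induction on $r$, exploiting the permutohedral structure that relates orderings of $K$ by adjacent transpositions. Although the structure strongly suggests contractibility of $|\pi/K|$, the crux of the argument will be a careful combinatorial analysis of how adjacent-amalgamation refinements interact with re-orderings of the tuple, and it is this step that requires the most work.
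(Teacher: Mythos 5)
Your plan stands or falls on the claim that the comma fibres $\pi/K$ are contractible, and this claim is false. For $K=\{K_0,\ldots,K_r\}$, the fibre consists of all ordered tuples whose underlying unordered factorisation coarsens $K$; as you say, below each of the $(r+1)!$ orderings of $K$ one sees a cone (contractible), but the union of these cones is not contractible. Already for $r=1$ the fibre is exactly the two orderings $(K_0,K_1)$ and $(K_1,K_0)$ with no further coarsenings (amalgamating the two entries gives $F$ itself, which is not allowed), i.e.\ a two-point discrete set $S^0$. In general the fibre is isomorphic to the poset of chains of proper nonempty subsets of $\{0,\ldots,r\}$, whose realisation is the barycentric subdivision of $\partial\Delta^r$, i.e.\ a sphere $S^{r-1}$. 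So no Nerve Lemma or permutohedral argument can rescue contractibility, and Quillen's Theorem~A does not apply. Indeed its conclusion would also be wrong: $\pi$ is not a weak equivalence in general (for rank $2$ it is a $2$-to-$1$ map of infinite discrete sets, hence not even injective on $\pi_0$).

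The fix is to use a poset fibre theorem that tolerates spherical (rather than contractible) fibres, which is what the paper does: by the theorem of Bj\"orner, Wachs and Welker, if each fibre $|f^{-1}Q_{\leqslant H}|$ is $\ell(f^{-1}Q_{<H})$-connected one gets
\[
	|P(F)|\simeq |Q(F)|\vee\bigvee_{H\in Q(F)} |f^{-1}Q(F)_{\leqslant H}|\ast|Q(F)_{>H}|,
\]
and one then computes $|f^{-1}Q(F)_{\leqslant H}|\cong S^{r_H-1}$ (the barycentric subdivision of $\partial\Delta^{r_H}$ just described), $\ell(f^{-1}Q(F)_{<H})=r_H-2$, and $|Q(F)_{>H}|\simeq\bigvee S^{n-r_H-2}$ (via $Q(F)_{>H}\cong CQ(H_0)\times\cdots\times CQ(H_{r_H})\setminus\{(-,\ldots,-)\}$, whose realisation is the join $|Q(H_0)|\ast\cdots\ast|Q(H_{r_H})|$, together with Hatcher--Vogtmann and induction on the rank). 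The joins $S^{r_H-1}\ast S^{n-r_H-2}\cong S^{n-2}$ then assemble the wedge of $(n-2)$-spheres. Your overall strategy (transfer from the Hatcher--Vogtmann poset along the forgetful map) is the right one, but the key lemma you need is the spherical identification of the fibres and the BWW wedge formula, not contractibility plus Theorem~A.
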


\begin{corollary}
[Connectivity of $|\SP_n|$ for automorphism groups of free groups]
\label{spn-autfn-corollary}
For the family of groups with multiplication $(\Aut(F_p))_{p\geqslant 0}$,
the splitting poset $|\SP_n|$ has the homotopy type of a wedge of
$(n-2)$-spheres, and in particular is $(n-3)$-connected.
\end{corollary}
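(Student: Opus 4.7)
The plan is to exhibit a homeomorphism $|\SP_n|\cong|P(F_n)|$ and then invoke Theorem~\ref{pf-theorem}. By Proposition~\ref{proposition-spn-autfn} we have $\SP_n\cong S(F_n)$, so it will suffice to show that $|S(F_n)|$ has the homotopy type of a wedge of $(n-2)$-spheres.

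The key point will be that $|P(F_n)|$ is the barycentric subdivision of $|S(F_n)|$. To establish this, I plan to identify the $k$-simplices of $|S(F_n)|$ --- that is, chains
\[
	(P_0,Q_0)<(P_1,Q_1)<\cdots<(P_k,Q_k)
\]
in $S(F_n)$ --- with length-$(k+2)$ tuples $(J_0,J_1,\ldots,J_{k+1})\in P(F_n)$, via the assignment $P_i=J_0\ast\cdots\ast J_i$ and $Q_i=J_{i+1}\ast\cdots\ast J_{k+1}$. Existence of such a tuple for a given chain is obtained by iterating the definition of $\leqslant$ in $S(F_n)$; uniqueness comes from the characterisation $J_i=P_i\cap Q_{i-1}$, which is proved using the unique reduced word property of free groups, exactly as in the verification of the intersection axiom for $\Aut(F_n)$ in the excerpt. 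Conversely, every tuple in $P(F_n)$ arises this way.

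Under this correspondence, deleting the vertex $(P_i,Q_i)$ from a chain amalgamates the adjacent entries $J_i$ and $J_{i+1}$ in the tuple, which is precisely the relation $\leqslant$ in $P(F_n)$. Hence the face poset of $|S(F_n)|$ is naturally isomorphic to $P(F_n)$, so its barycentric subdivision is $|P(F_n)|$. One further needs $|S(F_n)|$ to be a genuine simplicial complex, which reduces to showing that the $k+1$ vertices $(J_0\ast\cdots\ast J_i,\,J_{i+1}\ast\cdots\ast J_{k+1})$ of a given $k$-simplex are all distinct; this holds because each $J_j$ is nontrivial. The resulting homeomorphism $|\SP_n|\cong|P(F_n)|$, combined with Theorem~\ref{pf-theorem}, will finish the proof.

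The main obstacle is verifying the bijection between chains and tuples: existence of the $J_i$ is immediate, but uniqueness requires the identification $J_i=P_i\cap Q_{i-1}$, which is where one actually uses that $F_n$ is a free group. Everything else is combinatorial bookkeeping about face posets, and the serious topological content is packaged inside Theorem~\ref{pf-theorem}, which ultimately rests on Hatcher and Vogtmann's analysis of the poset of free factorisations.
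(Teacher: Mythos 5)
Your proposal is correct and is essentially the paper's own argument: the paper likewise identifies $P(F_n)$ with the poset of chains in $S(F_n)$ (its maps $\lambda$ and $\mu$ are exactly your tuple-to-chain correspondence, with uniqueness obtained from the intersections $J_i=A_i\cap B_{i-1}$), then uses $|\SP_n|\cong|S(F_n)|\cong|S(F_n)'|\cong|P(F_n)|$ and invokes Theorem~\ref{pf-theorem}. Your barycentric-subdivision/face-poset phrasing is just the derived-poset step in different language.
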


This result has been obtained independently, and with the same proof,
as part of work in progress by Kupers, Galatius and Randal-Williams.  
(See also the remarks after Theorem~\ref{theorem-stability}.)

\begin{proof}[Proof of Corollary~\ref{spn-autfn-corollary}]
If $P$ is a poset then we denote by $P'$ the \emph{derived poset}
of chains $p_0<\cdots<p_r$ in $P$ ordered by inclusion.
Its realisation satisfies $|P'|\cong |P|$.

Recall from Proposition~\ref{proposition-spn-autfn}
that $\SP_n$ is isomorphic to the poset $S(F_n)$
defined there.
So it will suffice to show that $P(F_n)$ is isomorphic to $S(F_n)'$,
for then $|\SP_n|\cong |S(F_n)|\cong |S(F_n)'|\cong |P(F_n)|$
and the result follows from Theorem~\ref{pf-theorem}.
Consider the maps 
\[
	\lambda\colon P(F_n)\to S(F_n)',
	\qquad
	\mu\colon S(F_n)'\to P(F_n)
\]
defined by
\[
	\lambda\Bigl(H_0,\ldots,H_{r+1}\Bigr)=
	\Bigl[ (H_0,H_1\ast \cdots \ast H_{r+1})<\cdots
	< (H_0\ast\cdots\ast H_{r},H_{r+1})\Bigr]
\]
and
\[
	\mu\Bigl[ (A_0,B_0)<\cdots
	< (A_r,B_r)\Bigr]
	=
	\Bigl(A_0,A_1\cap B_0,A_2\cap B_1, \ldots,A_r\cap B_{r-1},B_r\Bigr).
\]
Then one can verify that $\lambda$ and $\mu$ 
are mutually inverse maps of posets.
The verification requires one to use the fact that if
$(X_1,Y_1)<(X_2,Y_2)<(X_3,Y_3)$, then
$X_{1}\ast (X_2\cap Y_{1})=X_2$, $Y_{2}\ast(Y_1\cap X_{2})=Y_1$
and $(X_2\cap Y_{1})\ast(X_{3}\cap Y_2)=X_{3}\cap Y_{1}$,
which follow from the definition of the partial ordering
on $S(F_n)$.
\end{proof}

We now move towards the proof of Theorem~\ref{pf-theorem}.
In order to do so we require another definition.

\begin{definition}
Let $F$ be a free group of finite rank.
Define $Q(F)$ to be the poset of \emph{unordered}
tuples $H=(H_0,\ldots,H_r)$ of proper subgroups
of $F$ such that $r\geqslant 1$ and $H_0\ast\cdots\ast H_r=F$.
Give it the partial order in which $H\geqslant K$
if $K$ can be obtained by repeatedly amalgamating 
entries of $H$, adjacent or otherwise.
Let $f\colon P(F)\to Q(F)$ denote the map that
sends an ordered tuple to the same tuple, now unordered.
\end{definition}

The poset $Q(F_n)$ is exactly the opposite of the
\emph{poset of free factorisations of $F_n$}.
This poset was introduced and studied by Hatcher and Vogtmann
in section~6 of~\cite{HatcherVogtmannCerf}, where it was shown that
its realisation has the homotopy type of a wedge of $(n-2)$-spheres. 
It follows that if $F$ is a free group of rank $m$ then
$|Q(F)|$ has the homotopy type of a wedge of $(m-2)$-spheres.

We will now prove Theorem~\ref{pf-theorem} 
by deducing the connectivity
of $|P(F)|$ from the known connectivity of $|Q(F)|$.
In order to do this we will use a poset fibre theorem
due to Bj\"orner, Wachs and Welker~\cite{BWW}.
Let us recall some necessary notation.
Given a poset $P$ and an element $p\in P$, we define $P_{<p}$
to be the poset $\{q\in P\mid q<p\}$. We define
$P_{\leqslant p}$, $P_{>p}$ and $P_{\geqslant p}$ similarly.
The \emph{length} $\ell(P)$ of a poset $P$ is defined to
be the maximum $\ell$ such that there is a chain
$p_0<p_1<\cdots<p_\ell$ in $P$; the length of the empty poset is
defined to be $-1$.  Theorem~1.1 of~\cite{BWW} states
that if $f\colon P\to Q$ is a map of posets such that
for all $q\in Q$ the fibre $|f^{-1}Q_{\leqslant q}|$ is 
$\ell(f^{-1}Q_{<q})$-connected, then so long as $|Q|$ is connected, we
have
\[
	|P|\simeq
	|Q|\vee\bigvee_{q\in Q} |f^{-1}Q_{\leqslant q}|\ast|Q_{>q}|
\]
where $\ast$ denotes the \emph{join}.
See  the introduction to~\cite{BWW} for further details.

\begin{proof}[Proof of Theorem~\ref{pf-theorem}]
The proof is by induction on the rank of $F$.
When $\rank(F)=2$ we need only observe that
$P(F)$ is an infinite set with trivial partial order,
so that $|P(F)|$ is an infinite discrete set,
and in particular is a wedge of $0$-spheres.
Suppose now that $\rank(F)\geqslant 3$ and that the claim holds for
all free groups of smaller rank than $F$.
Since $\rank(F)\geqslant 3$, $|Q(F)|$ is connected.
Suppose that $H=(H_0,\ldots,H_{r_H})\in Q(F)$.
Then Lemmas~\ref{pf-one}, \ref{pf-two}, \ref{pf-four} and \ref{pf-five}
below tell us the following.
\begin{itemize}
	\item
	$\ell(f^{-1}(Q(F)_{<H}))=r_H-2$
	\item
	$|f^{-1}(Q(F)_{<H})|\cong S^{r_H-1}$
	\item
	$|Q(F)_{>H}|\simeq\bigvee S^{n-r_H-2}$
\end{itemize}
Since $S^{r_H-1}$ is $(r_H-2)$-connected, we may
apply the theorem of Bj\"orner, Wachs and Welker,
which tells us that
\begin{align*}
	|P(F)|
	&\simeq
	|Q(F)|\vee\bigvee_{H\in Q(F)} 
	\bigl(|f^{-1}(Q(F)_{\leqslant H})|\ast|Q(F)_{>H}|\bigr).
	\\
	&\simeq \bigvee S^{n-2}
	\vee\bigvee_{H\in Q(F)}
	\left(\left(\bigvee S^{n-r_H-2}\right)\ast S^{r_H-1}\right)
	\\
	&\simeq \bigvee S^{n-2}
	\vee\bigvee_{H\in Q(F)}\bigvee \left(S^{n-r_H-2}\ast S^{r_H-1}\right)
	\\
	&\simeq \bigvee S^{n-2}\vee\bigvee_{H\in Q(F)}\bigvee S^{n-2}
	\\
	&\simeq\
	\bigvee S^{n-2}
\end{align*}
as required.
\end{proof}

\begin{lemma}
\label{pf-one}
Let $F$ be a free group of finite rank and let $H=(H_0,\ldots,H_r)\in Q(F)$.
Then $\ell(f^{-1}(Q(F)_{<H}))=r-2$.
\end{lemma}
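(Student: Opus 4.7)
The plan is to reduce the problem to a combinatorial statement about the number of entries of an ordered tuple, since this quantity strictly increases along any chain in $P(F)$ and is constrained from above by the condition $f(L) < H$.

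For an element $L = (L_0, \ldots, L_s) \in P(F)$, write $|L| = s + 1$ for the number of entries. The definition of $P(F)$ requires $|L| \geqslant 2$, while the condition $f(L) < H$ in $Q(F)$ means that $f(L)$ is obtained from $H$ by a nonempty sequence of amalgamations, so $|L| = |f(L)| \leqslant r$. Moreover, if $L < L'$ in $P(F)$ then $L$ is obtained from $L'$ by a nonempty sequence of amalgamations of adjacent entries, which strictly reduces the entry count. Hence any chain $L^{(0)} < L^{(1)} < \cdots < L^{(\ell)}$ in $f^{-1}(Q(F)_{<H})$ satisfies $|L^{(0)}| < |L^{(1)}| < \cdots < |L^{(\ell)}|$ with all values lying in $\{2, 3, \ldots, r\}$, forcing $\ell + 1 \leqslant r - 1$, i.e.\ $\ell \leqslant r - 2$.

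For the matching lower bound, I exhibit an explicit chain. After arbitrarily ordering the entries of $H$ as $(H_0, \ldots, H_r)$, set
\[
L^{(i)} = \bigl(H_0 \ast H_1 \ast \cdots \ast H_{r-1-i},\ H_{r-i},\ H_{r-i+1},\ \ldots,\ H_r\bigr)
\]
for $0 \leqslant i \leqslant r - 2$. Then $|L^{(i)}| = i + 2 \in \{2, \ldots, r\}$, so $f(L^{(i)}) < H$ and thus $L^{(i)} \in f^{-1}(Q(F)_{<H})$. Furthermore $L^{(i)}$ is obtained from $L^{(i+1)}$ by amalgamating its first two (adjacent) entries, so $L^{(i)} < L^{(i+1)}$ in $P(F)$. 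This produces a chain of length $r - 2$, completing the proof.

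The argument is essentially bookkeeping; the only point requiring real care is the distinction between the adjacent-amalgamation order on $P(F)$ and the arbitrary-amalgamation order on $Q(F)$, which enters in both the upper bound (via the constraint $|L| \leqslant r$, where we use only that some amalgamation has occurred, not that it is adjacent) and the lower bound (where we must amalgamate only adjacent entries to stay inside $P(F)$).
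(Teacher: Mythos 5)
Your proof is correct and uses essentially the same idea as the paper: chain length in $f^{-1}(Q(F)_{<H})$ is governed by the number of entries, with the bound realised by successively amalgamating adjacent entries. The paper phrases this by first computing $\ell(f^{-1}(Q(F)_{\leqslant H}))=r-1$ and noting maximal chains contain $H$, whereas you bound the strict fibre directly; this is only a cosmetic difference, and your version spells out the counting more carefully.
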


\begin{proof}
After fixing an ordering of the tuple $H$, one can amalgamate
$(r-1)$ adjacent entries before obtaining a $2$-tuple.  This shows that
$\ell(f^{-1}(Q(F)_{\leqslant H}))=r-1$.  Since any maximal chain
must include $H$ itself (with some ordering) it follows that 
$\ell(f^{-1}(Q(F)_{<H}))=r-2$.
\end{proof}

\begin{lemma}
\label{pf-two}
Let $F$ be a free group of finite rank and let $H=(H_0,\ldots,H_r)\in Q(F)$.
Then $|f^{-1}(Q(F)_{\leqslant H})|\cong S^{r-1}$.
\end{lemma}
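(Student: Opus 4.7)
The plan is to identify $f^{-1}(Q(F)_{\leqslant H})$, as a poset, with the opposite of the proper part of the face poset of the permutohedron of order $r+1$, whose realisation is standardly known to be homeomorphic to $S^{r-1}$.

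First I would set up a bijection $\Phi$ between $f^{-1}(Q(F)_{\leqslant H})$ and the set $\mathcal{O}_{r+1}$ of ordered set partitions $(B_0,\ldots,B_s)$ of $\{0,1,\ldots,r\}$ with $s\geqslant 1$. Given $K=(K_0,\ldots,K_s)$ in the fibre, I would define $\Phi(K)$ by setting $B_j = \{i : H_i \subseteq K_j\}$; conversely, I would send $(B_0,\ldots,B_s)$ to the tuple whose $j$-th entry is the free product of the $H_i$ with $i \in B_j$. That these are mutually inverse rests on the observation that if $K_j$ equals the free product of the $H_i$ with $i\in B_j$, then $K_j$ is a free factor of $F$ complementary to the free product of the remaining $H_i$, so $H_i \subseteq K_j$ if and only if $i \in B_j$.

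Next I would verify that the partial order on $P(F)$, namely amalgamation of \emph{adjacent} entries, corresponds under $\Phi$ to the order on $\mathcal{O}_{r+1}$ in which $\pi \leqslant \pi'$ iff $\pi$ is obtained from $\pi'$ by repeatedly merging consecutive blocks. It is then a standard combinatorial fact that $\mathcal{O}_{r+1}$, equipped with the opposite of this order, is isomorphic to the proper part of the face poset of the permutohedron of order $r+1$: an ordered set partition into $s+1$ blocks corresponds to a face of dimension $r-s$, with singleton partitions giving the vertices and the excluded $1$-block partition corresponding to the top-dimensional cell. Since the proper face poset of any $r$-dimensional convex polytope has order complex homeomorphic to the boundary sphere $S^{r-1}$, and since the order complex of a poset agrees with that of its opposite, we conclude $|f^{-1}(Q(F)_{\leqslant H})| \cong S^{r-1}$.

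The only delicate step is the first: verifying $\Phi$ is a bijection turns on the uniqueness of the way a given free factorisation of $F$ refines through $H$, which ultimately is what makes the $B_j$'s recoverable from the $K_j$'s. Beyond that the argument is entirely combinatorial, and amounts to recognising the combinatorics of the permutohedron.
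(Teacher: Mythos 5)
Your argument is correct and is essentially the paper's: there the fibre is identified with the poset $X_r$ of flags of proper subsets of $\{0,\ldots,r\}$ (i.e.\ the face poset of the barycentric subdivision of $\partial\Delta^r$), which matches your ordered set partitions via the partial unions $B_0\cup\cdots\cup B_j$, so your permutohedron face poset and the paper's subdivided simplex boundary encode the same combinatorics, including the recoverability point (each block is $\{i : H_i\subseteq K_j\}$) needed for the bijection. Both routes then conclude $|f^{-1}(Q(F)_{\leqslant H})|\cong S^{r-1}$ from the standard fact that the relevant complex is (a subdivision of) the boundary of an $r$-dimensional polytope or simplex.
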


\begin{proof}
The poset $f^{-1}(Q(F))_{\leqslant H}$ 
is the subposet of $P(F)$ consisting of tuples 
$K=(K_0,\ldots,K_s)$ where each $K_j$ is an amalgamation of some of the $H_i$.  
It is isomorphic to the poset $X_r$ of sequences 
$F=(F_0\subset F_1\subset\cdots\subset F_{s-1})$
of proper subsets of $\{0,\ldots, r\}$, where $F'\leqslant F$
if $F'$ can be obtained from $F$ by forgetting terms of the sequence.
The isomorphism
\[
	X_r\xrightarrow{\ \cong\ }f^{-1}(Q(F))_{\leqslant H}
\]
sends $F=(F_0\subset\cdots\subset F_{s-1})$ to
$K=(K_0,\ldots, K_s)$ where for $i\leqslant s-1$, 
$K_j$ is the subgroup generated
by the $H_i$ for $i\in F_j\setminus F_{j-1}$, and where 
$K_s$ is the subgroup generated by the $H_j$ for $j\not\in F_{s-1}$.
Now $X_n$ is isomorphic to the poset of faces of the barycentric
subdivision of $\partial\Delta^r$, as we see by identifying
$F_0\subset\cdots\subset F_{s-1}$ with the face whose vertices
are the barycentres of the simplices spanned by the $F_i$.
So $|X_n|\cong\partial\Delta^r\cong S^{r-1}$ as claimed.
\end{proof}

Before stating the next lemma we introduce some notation.
Given a poset $P$, let $CP$ denote the poset obtained
by adding a new minimal element $-$.

\begin{lemma}
\label{pf-four}
Let $F$ be a free group of finite rank and let $H=(H_0,\ldots,H_r)\in Q(F)$.
Then
\[
	|Q(F)_{> H}|
	\cong
	|Q(H_0)|\ast \cdots \ast |Q(H_r)|.
\]
\end{lemma}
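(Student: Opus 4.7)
The plan is to set up an explicit isomorphism between $Q(F)_{>H}$ and a purely combinatorial poset, namely a product of cone posets with the all-bottom element removed, and then invoke the standard identification of such a poset with an iterated join.

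First I would analyse what it means for $K \in Q(F)$ to satisfy $K > H$. Since $H$ is obtained from $K$ by repeatedly amalgamating entries, the entries of $K$ partition into blocks, one block per index $i \in \{0,\ldots,r\}$, with the free product of the entries in the $i$th block equal to $H_i$. Thus an element $K > H$ is exactly the data, for each $i$, of a free factorisation of $H_i$ (possibly trivial, meaning $H_i$ as a single entry), subject to at least one factorisation being non-trivial. Writing $CQ(H_i)$ for $Q(H_i)$ with a new minimum $-_i$ adjoined, where $-_i$ corresponds to the trivial refinement of $H_i$, I would establish a poset isomorphism
\[
	Q(F)_{>H}
	\;\cong\;
	\bigl(CQ(H_0) \times \cdots \times CQ(H_r)\bigr) \setminus \{(-_0,\ldots,-_r)\},
\]
with the right-hand side carrying the product order. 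The delicate point is that the partial orders match: if $K,K'$ are both refinements of $H$ and $K' \leqslant K$ in $Q(F)$, then the amalgamations turning $K$ into $K'$ cannot cross different $H_i$-blocks, because each entry of $K'$ must be contained in some single $H_i$ (since $K' > H$). This rests on the uniqueness of reduced forms in free products, in the same spirit as the intersection axiom verification for the family $(\Aut(F_p))_{p\geqslant 0}$.

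Second, I would invoke the standard combinatorial fact: for any posets $P_0,\ldots,P_r$, the realisation of $(CP_0\times\cdots\times CP_r)\setminus\{\text{bottom}\}$ is canonically homeomorphic to $|P_0|\ast\cdots\ast|P_r|$. One way to see this is to regard $CP_i$ as the face poset of $|P_i|$ augmented with the empty face, so that the product-minus-bottom becomes the face poset of the simplicial join $|P_0|\ast\cdots\ast|P_r|$, whose order complex is the barycentric subdivision of the join. Alternatively one can induct on $r$ using associativity of the join together with the pushout decomposition $X\ast Y = (CX\times Y)\cup_{X\times Y}(X\times CY)$ and the fact that $|CP|=C|P|$.

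The main obstacle I foresee is the first step: verifying carefully that $Q(F)_{>H}$ is captured precisely by independent refinements of the $H_i$, with the partial order respecting the block structure and no hidden cross-block amalgamations. Once this combinatorial identification is in place, the topological statement is a formal consequence of the general fact about products of cone posets.
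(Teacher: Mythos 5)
Your proposal is correct and follows essentially the same route as the paper: both identify $Q(F)_{>H}$ with $\bigl(CQ(H_0)\times\cdots\times CQ(H_r)\bigr)\setminus\{(-,\ldots,-)\}$ via the block decomposition of a refinement of $H$, and then use the standard homeomorphism between the realisation of a product of cone posets minus its bottom element and the join $|Q(H_0)|\ast\cdots\ast|Q(H_r)|$. The only cosmetic difference is that the paper first writes the isomorphism on $Q(F)_{\geqslant H}$ and then removes the minimum, and it leaves implicit the cross-block check and the join fact that you spell out.
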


\begin{proof}
There is an isomorphism
\[
	Q(F)_{\geqslant H}
	\cong
	CQ(H_0)\times \cdots \times CQ(H_r).
\]
It simply takes a tuple $K=(K_0,\ldots,K_s)$
and sends it to the element of $CQ(H_0)\times \cdots \times CQ(H_r)$
whose $CQ(H_i)$-component is the tuple consisting of those $K_j$
which are contained in $H_i$ if there are more than one such,
and which is $-$ otherwise, in which case $H_i$ itself appears
as one of the $K_j$. 
This isomorphism identifies $H$ itself with the tuple $(-,\ldots,-)$,
so that we obtain a restricted isomorphism
\[
	Q(F)_{> H}
	\cong
	CQ(H_0)\times \cdots \times CQ(H_r)\setminus(-,\ldots,-).
\]
Now the realisation of  the right hand side is exactly
$|Q(H_0)|\ast\cdots\ast |Q(H_r)|$, so the result follows.
\end{proof}

\begin{lemma}
\label{pf-five}
Let $F$ be a free group of finite rank and let $H=(H_0,\ldots,H_r)\in Q(F)$.
Then $|Q(H_0)|\ast \cdots \ast |Q(H_r)|$
has the homotopy type of a wedge of $(n-r-2)$-spheres.
\end{lemma}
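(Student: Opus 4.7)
The plan is to combine the inductive structure inherited from Hatcher--Vogtmann's result on $|Q(F)|$ with the standard behaviour of joins of wedges of spheres. Write $n_i=\mathrm{rank}(H_i)$, which is well-defined since each $H_i$ is a subgroup of a free group, hence free by Nielsen--Schreier. Because $H_0\ast\cdots\ast H_r=F$, ranks are additive in a free product, so $n_0+\cdots+n_r=n$. Each $H_i$ is a proper subgroup of $F$ and $r\geqslant 1$, so $n_i<n$ for every $i$.

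Next, I would apply Hatcher--Vogtmann's identification of $|Q(F)|$ with a wedge of $(\mathrm{rank}(F)-2)$-spheres, as cited in the discussion preceding Theorem~\ref{pf-theorem}. For each $i$ with $n_i\geqslant 2$ this gives $|Q(H_i)|\simeq\bigvee S^{n_i-2}$. For $i$ with $n_i=1$ the group $H_i\cong\mathbb{Z}$ admits no expression as a free product of two or more proper subgroups, so $Q(H_i)=\emptyset$; this fits the same pattern under the convention $\emptyset\simeq S^{-1}$.

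Finally, I would iterate the join identity $S^a\ast S^b\simeq S^{a+b+1}$ and the fact that joins distribute over wedges up to homotopy, giving
\[
|Q(H_0)|\ast\cdots\ast|Q(H_r)|\simeq\bigvee S^{(n_0-2)+(n_1-2)+\cdots+(n_r-2)+r}=\bigvee S^{n-r-2},
\]
using $\sum n_i=n$ and counting the $r$ suspensions produced by the $r+1$-fold join. The convention $\emptyset=S^{-1}$ ensures this arithmetic goes through even when some $n_i=1$, since then $X\ast\emptyset\simeq X$ while simultaneously the exponent $n_i-2=-1$ contributes the correct $-1$ to the sum so as to cancel the extra $+1$ suspension one would have obtained had that factor not been empty.

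The only real obstacle is the bookkeeping around $n_i=1$ factors; a clean way to avoid separate casework is to regroup the join as $\ast_{i\in S}|Q(H_i)|$, where $S=\{i:n_i\geqslant 2\}$, and then verify directly that the resulting exponent $\sum_{i\in S}(n_i-2)+(|S|-1)$ simplifies to $n-r-2$ by writing $\sum_{i\in S}n_i=n-(r+1-|S|)$. This makes the conclusion independent of how many free factors have rank one.
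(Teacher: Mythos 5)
Your proof is correct and follows essentially the same route as the paper's: apply the Hatcher--Vogtmann result to each $|Q(H_i)|$, use the fact that joins distribute over wedges up to homotopy, and combine $S^a\ast S^b\simeq S^{a+b+1}$ with additivity of ranks ($\sum_i \mathrm{rank}(H_i)=n$) to obtain the exponent $n-r-2$. Your explicit bookkeeping for rank-one factors via the convention $\emptyset=S^{-1}$ is a detail the paper leaves implicit, but the argument is the same.
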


\begin{proof}
Write $s_i$ for the rank of $H_i$, so that $|Q(H_i)|$
has the homotopy type of a wedge of $(s_i-2)$-spheres.
Since wedge sums commute with joins up to homotopy equivalence,
it follows that $|Q(H_0)|\ast\cdots\ast |Q(H_r)|$
has the homotopy type of a wedge of copies of
$S^{s_0-2}\ast\cdots\ast S^{s_r-2}$.  But then
\[
	S^{s_0-2}\ast\cdots\ast S^{s_r-2}
	\cong
	S^{(s_0-2)+\cdots+(s_r-2)+r}
	=S^{(s_0+\cdots+s_r)-2(r+1) +r}
	=S^{n -r-2}
\]
as required.
\end{proof}

\subsection{Braid groups}
Now we investige the connectivity of the realisations of
the splitting posets for braid groups.  In this case we will
appeal to well-known connectivity results for complexes
of arcs.

\begin{proposition}[Connectivity of $|\SP_n|$ for braid groups]
For the family of groups with multiplication
$(B_p)_{p\geqslant 0}$, and for any $n\geqslant 2$,
$|\SP_n|$ has the homotopy type of a wedge of $(n-2)$-spheres,
and in particular is $(n-3)$-connected.
\end{proposition}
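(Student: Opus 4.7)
The plan is to analyse $|\A_n|$ via Hatcher-style surgery arguments for arc complexes of punctured surfaces, in the same spirit that Charney's split building theorem was invoked for general linear groups. An $r$-simplex of $|\A_n|$ is an isotopy class of ordered, pairwise disjoint systems of $r+1$ arcs from the north pole to the south pole in $D^2\setminus X_n$, each individually separating $X_n$ into two non-empty subsets. Because arcs may wind arbitrarily around punctures, the orbits $B_n/(B_p\times B_{n-p})$ are typically infinite, so unlike the symmetric groups case there is no direct combinatorial identification of $\A_n$ with the face poset of a simplex. Instead the topology of $|\A_n|$ must be extracted from the geometry of arcs.

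The key step would be to fix a reference arc $\alpha_0$ -- say the simplest separating arc with $\{x_1\}$ on its left -- and set up an intersection-reducing surgery on arc systems. This surgery should produce a deformation of $|\A_n|$ onto the closed star of $\alpha_0$ (which is contractible) together with the subcomplex spanned by arc systems disjoint from $\alpha_0$. That disjoint subcomplex decomposes: arcs lying on the left of $\alpha_0$ sit in a once-punctured disc and contribute nothing, while arcs lying on the right sit in an $(n-1)$-punctured disc and hence form a copy of $\A_{n-1}$, controlled inductively. An induction on $n$, combined with a join/suspension identity relating the link of $\alpha_0$ to $|\A_{n-1}|$, should yield the wedge-of-$(n-2)$-spheres conclusion.

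The main obstacle is implementing the surgery uniformly on arc systems with arbitrarily many intersections with $\alpha_0$ and arbitrarily complicated winding: bigons between arcs of the system and $\alpha_0$ must be eliminated in a way that respects the simplicial structure of $|\A_n|$, even as the number of intersections grows. This is a standard but technical application of the innermost-disk arguments underlying Hatcher's arc complex results, and is what makes the braid group case directly analogous to (but more delicate than) the other examples treated in this section.
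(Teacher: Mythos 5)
Your overall strategy (induction on the number of punctures, with intersection-reducing surgery against a fixed reference arc) belongs to the right family of techniques, but as written there is a genuine gap, and the pivotal step is actually wrong as stated. The proposed retraction target, the closed star of $\alpha_0$ together with the subcomplex of arc systems disjoint from $\alpha_0$, is contractible: if a collection of isotopy classes is pairwise disjointly realizable and each class is disjointly realizable from $\alpha_0$, then the whole collection can be put in minimal position simultaneously, disjoint from each other and from $\alpha_0$, so every simplex of your ``disjoint'' subcomplex already lies in the closed star of $\alpha_0$, and the closed star is a cone. A deformation of $|\A_n|$ onto this union would therefore prove $|\A_n|$ contractible, which is false: $|\A_2|\cong B_2/(B_1\times B_1)$ is an infinite discrete set, and in general the answer is a nontrivial wedge of $(n-2)$-spheres. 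The point is that Hatcher's flow/surgery argument yields contractibility only for arc complexes without the essentiality constraint (arcs allowed to cut off unpunctured discs); here every arc is required to separate $X_n$ nontrivially, and for such restricted complexes surgery can only give a connectivity bound, proved by a more delicate induction that keeps track of how connectivity grows with the number of punctures. That quantitative step is the entire content of the proposition, and it is exactly what your sketch defers as ``standard but technical.''

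For comparison, the paper does not redo this work: after observing that $|\SP_n|\cong|\A_n|$ is the complex of isotopy classes of essential arcs from the north pole to the south pole, with simplices given by disjointness, it identifies this complex with $\mathcal{B}(S,\Delta_0,\Delta_1)$ in the notation of section~4 of \cite{WahlMCG}, where $S$ is the $n$-punctured disc and $\Delta_0$, $\Delta_1$ are the two poles, and then invokes Lemma~4.7 of \cite{WahlMCG}, which shows that the connectivity of $|\A_n|$ exceeds that of $|\A_2|$ by $n-2$; since $|\A_2|$ is a nonempty set, $|\A_n|$ is $(n-3)$-connected, and the wedge statement follows as the complex has dimension $n-2$. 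If you want a self-contained proof along your lines, you must reproduce the argument behind that lemma (the Hatcher--Wahl treatment of essential-arc complexes), not a flow onto the star of a single arc.
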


\begin{proof}
Recall from Proposition~\ref{proposition-spn-autfn} 
that we identified $\SP_n$ with the poset of arcs $\A_n$ defined there.
Thus $|\A_n|$ is (the realisation of) the
simplicial complex with vertices the elements of $\A_n$,
in which vertices $\alpha_0,\ldots,\alpha_r$ span a simplex
if and only if, after possibly reordering,
$\alpha_0<\cdots< \alpha_r$.
Now $\alpha_0<\cdots< \alpha_r$ holds if and
only if the $\alpha_i$ have representatives $a_i$ that are
disjoint except at their endpoints, and such that $a_0,\ldots,a_r$
meet the north pole in anticlockwise order.
Thus $|\A_n|$ is the realisation of the
simplicial complex whose vertices are isotopy classes
of nontrivial (they do not separate
a disc from the remainder of the surface) arcs in $D^2\setminus X_n$
from the north pole to the south, where a collection of vertices
form a simplex if they have representing arcs that can be
embedded disjointly except at their endpoints.  
In the notation of section~4 of~\cite{WahlMCG}, this is
exactly the complex $\mathcal{B}(S,\Delta_0,\Delta_1)$
where $S=D^2\setminus X_n$, $\Delta_0\subset\partial D^2$
is the set containing just the north pole,
and $\Delta_1\subset\partial D^2$ is the set containing just the south pole.
Now, replacing $S$ with the complement of $n$ open discs
in $D^2$ does not change the isomorphism type
of the complex.  But in that case, Lemma~4.7 of~\cite{WahlMCG}
applies to show that $|\A_n|$ has connectivity $(n-2)$ greater
than that of $|\A_2|$, which is $(-1)$-connected since it is
a simply a nonempty set.
\end{proof}

\section{Proofs of the applications}
\label{section-applications}
In this section we will assume that
Theorems~\ref{theorem-stability},
\ref{theorem-kernel} and~\ref{theorem-realisation}
hold, and we will prove the remaining theorems stated in the introduction.
We begin with three closely analogous lemmas about the groups
$\GL_n(\Z)$, $\GL_n(\F_2)$ and $\Aut(F_n)$.

\begin{lemma}\label{lemma-low-glnz}
Define elements of $\GL_n(\Z)$, $n=1,2,3$ as follows
\[
	s_1=\begin{pmatrix} -1 \end{pmatrix},
	\qquad
	s_2=\begin{pmatrix} -1 & \\ & 1 \end{pmatrix},
	\qquad
	s_3=\begin{pmatrix} -1 & & \\ & 1 & \\ & & 1\end{pmatrix},
	\qquad
	t=\begin{pmatrix} 1 & 1 \\ 0 & 1\end{pmatrix}.
\]
Use the same symbols to denote the corresponding elements of
$H_1(\GL_n(\Z);\Z) = \GL_n(\Z)_\mathrm{ab}$.
Then the $H_1(\GL_n(\Z);\Z)$ for $n=1,2,3$
are elementary abelian $2$-groups with generators
$s_1\in H_1(\GL_1(\Z);\Z)$, $s_2,t\in H_1(\GL_2(\Z);\Z)$ and $s_3\in H_1(\GL_3(\Z);\Z)$,
and the stabilisation maps have the following effect.
\[\xymatrix@R=5 pt@C=35 pt{
	H_1(\GL_1(\Z);\Z)
	\ar[r]^{s_\ast}	
	&
	H_1(\GL_2(\Z);\Z)	
	\ar[r]^{s_\ast}
	&
	H_1(\GL_3(\Z);\Z)
	\\
	s_1
	\ar@{|->}[r]	
	&
	s_2
	\ar@{|->}[r]	
	&
	s_3
	\\	
	{}
	&
	t\ar@{|->}[r]
	&
	0
}\]
\end{lemma}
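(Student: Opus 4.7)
The plan is to compute each abelianization $H_1(\GL_n(\Z);\Z) = \GL_n(\Z)_{\mathrm{ab}}$ for $n=1,2,3$ directly, and then verify the stabilisation maps by recognising matrices as conjugate or as lying in a perfect subgroup. The case $n=1$ is immediate, since $\GL_1(\Z) = \{\pm 1\}$ is already cyclic of order $2$ with generator $s_1$. The case $n=3$ reduces to the classical fact that $\SL_3(\Z)$ is perfect; this follows from the commutator identity $e_{ik}(a) = [e_{ij}(a), e_{jk}(1)]$ for distinct $i,j,k$ together with the fact that $\SL_3(\Z)$ is generated by elementary matrices. The determinant therefore induces an isomorphism $\GL_3(\Z)_{\mathrm{ab}} \xrightarrow{\cong} \{\pm 1\}$, with $s_3$ as generator.

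The main work is the case $n=2$. I would exploit the split extension $\GL_2(\Z) = \SL_2(\Z) \rtimes \langle s_2 \rangle$, which yields a split short exact sequence
\[
    0 \longrightarrow \bigl(\SL_2(\Z)_{\mathrm{ab}}\bigr)_{\langle s_2 \rangle} \longrightarrow \GL_2(\Z)_{\mathrm{ab}} \longrightarrow \Z/2 \longrightarrow 0,
\]
where the left-hand term is the coinvariants of the conjugation action of $s_2$. The classical presentation $\SL_2(\Z) = \langle S, t \mid S^4,\ (St)^6,\ S^2 (St)^{-3}\rangle$ with $S = \left(\begin{smallmatrix} 0 & -1 \\ 1 & 0\end{smallmatrix}\right)$ gives $\SL_2(\Z)_{\mathrm{ab}} \cong \Z/12$ generated by the class of $t$. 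A direct computation shows $s_2 t s_2^{-1} = t^{-1}$, so $s_2$ acts by negation on $\Z/12$, and the coinvariants are $\Z/12 / 2\cdot\Z/12 \cong \Z/2$. Combined with the splitting this gives $\GL_2(\Z)_{\mathrm{ab}} \cong (\Z/2)^2$ with generators $t$ and $s_2$.

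The stabilisation maps are then routine. By definition, $s_\ast$ sends a matrix $A$ to $\mathrm{diag}(1, A)$. Hence $s_1$ maps to $\mathrm{diag}(1, -1) \in \GL_2(\Z)$, which is conjugate to $s_2 = \mathrm{diag}(-1, 1)$ via the swap permutation matrix, and so equals $s_2$ in $H_1$. The same argument gives $s_\ast(s_2) = s_3$. Finally, $t$ stabilises to the elementary matrix $e_{23}(1) \in \GL_3(\Z)$, which lies in the perfect group $\SL_3(\Z)$ and is therefore zero in $H_1(\GL_3(\Z);\Z)$.

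The only genuine obstacle is the $n=2$ case: all the inputs are classical, but one needs to be careful both in identifying the $s_2$-action on $\SL_2(\Z)_{\mathrm{ab}}$ and in extracting the correct generators of the resulting abelianization.
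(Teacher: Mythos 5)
Your proof is correct and follows essentially the same route as the paper: split the determinant extension $\SL_n(\Z)\to\GL_n(\Z)\to\{\pm 1\}$, use perfectness of $\SL_3(\Z)$, and compute $H_1(\GL_2(\Z);\Z)$ as $(\SL_2(\Z)_{\mathrm{ab}})_{\{\pm 1\}}\oplus\Z/2$ with the negation action on $\SL_2(\Z)_{\mathrm{ab}}\cong\Z/12$. The only differences are cosmetic: you extract $\Z/12$ and the generator $t$ from the standard presentation and check the action via $s_2ts_2^{-1}=t^{-1}$ (the paper instead cites the identification with generators $u,v$ and checks the action on those), and you verify $s_\ast(s_1)=s_2$, $s_\ast(s_2)=s_3$ by explicit conjugation with permutation matrices.
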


\begin{proof}
There are split extensions
\[
	\SL_n(\Z)\longrightarrow \GL_n(\Z)\xrightarrow{\det}\{\pm 1\}
\]
with section determined by $-1\mapsto s_n$, so that we have isomorphisms
\[
	H_1(\GL_n(\Z);\Z)\cong H_1(\SL_n(\Z);\Z)_{\{\pm 1\}}\oplus \Z/2\Z,
\]
where $\Z/2\Z$ is generated by the class of $s_n$.
This isomorphism respects the stabilisation maps.
Now $H_1(\SL_1(\Z);\Z)$ obviously vanishes, and $H_1(\SL_3(\Z);\Z)$ vanishes
since $\SL_n(\Z)$ is perfect for $n\geqslant 3$.  
So it suffices to show that $H_1(\SL_2(\Z);\Z)_{\{\pm 1\}}$
is a group of order $2$ generated by $t$.

Let us write
\[
	u=\begin{pmatrix} 0 & -1 \\ 1 & 1  \end{pmatrix},
	\qquad
	v=\begin{pmatrix} 0 & 1 \\ -1 & 0\end{pmatrix}.
\]
Then $H_1(\SL_2(\Z);\Z)\cong\Z/12\Z$, where $v\leftrightarrow 3$
and $u\leftrightarrow 2$~\cite[p.91]{Knudson}.
One can verify that $s_2vs_2^{-1}=v^{-1}$ and $s_2us_2^{-1}=v^{-1}u^{-1}v$,
so that $\{\pm 1\}$ acts on $H_1(\SL_2(\Z))$ by negation.
Consequently $H_1(\SL_2(\Z);\Z)_{\{\pm 1\}}=(\Z/12\Z)_{\{\pm 1\}}$ 
has order $2$ with generator $t=vu$ as required.
\end{proof}

\begin{lemma}\label{lemma-low-glnftwo}
$H_1(\GL_n(\F_2)) = \GL_n(\F_2)_\mathrm{ab}$ is trivial for $n=1,3$,
and is generated by the element $t$ determined by the matrix
$\left(\begin{smallmatrix} 1 & 1 \\ 0 & 1\end{smallmatrix}\right)$
for $n=2$.
\end{lemma}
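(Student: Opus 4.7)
The plan is to compute each of the three groups $\GL_n(\F_2)^\mathrm{ab}$ directly, using standard low-rank identifications; all three computations are elementary and the main content is in $n=2$.

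First I would dispose of the case $n=1$: since $\F_2$ has only one unit, $\GL_1(\F_2)$ is the trivial group, and $H_1$ trivially vanishes. For $n=3$ I would use the fact that $\GL_3(\F_2)$ is a simple group of order $168$ (isomorphic to $\mathrm{PSL}_2(\F_7)$), hence perfect, so its abelianisation is trivial. If one prefers a self-contained argument, one can instead invoke that $\GL_n(\F_2)=\SL_n(\F_2)$ is generated by elementary matrices and that for $n\geqslant 3$ any two elementary matrices $E_{ij}(1)$ and $E_{kj}(1)$ with a third index $\ell$ available can be expressed as a commutator $[E_{i\ell}(1),E_{\ell j}(1)]=E_{ij}(1)$, so every generator dies in the abelianisation.

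The main case is $n=2$. Here I would identify $\GL_2(\F_2)$ with $S_3$ via its action on the three nonzero vectors $(1,0)$, $(0,1)$, $(1,1)$ of $\F_2^2$; this action is faithful and both groups have order $6$. The abelianisation of $S_3$ is $\Z/2\Z$ (generated by any transposition and detected by the sign homomorphism). Then I would check that under this identification the matrix $t=\bigl(\begin{smallmatrix} 1 & 1 \\ 0 & 1\end{smallmatrix}\bigr)$ acts by fixing $(0,1)$ and interchanging $(1,0)$ with $(1,1)$, so it is a transposition and therefore generates the abelianisation.

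The only step requiring any thought is the identification $\GL_3(\F_2)^\mathrm{ab}=0$; every other computation is routine. If one wishes to avoid citing the simplicity of $\GL_3(\F_2)$ it is easiest to give the commutator relations for elementary matrices mentioned above. I expect no real obstacles.
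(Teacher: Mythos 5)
Your proposal is correct and follows essentially the same route as the paper: $n=1$ is trivial, $n=3$ follows from the perfectness of $\GL_3(\F_2)=\SL_3(\F_2)$, and for $n=2$ one recognises the order-$6$ group (the paper views it as dihedral generated by the two involutions $\left(\begin{smallmatrix}1&1\\0&1\end{smallmatrix}\right)$ and $\left(\begin{smallmatrix}0&1\\1&0\end{smallmatrix}\right)$, you as $S_3$ acting on the nonzero vectors) and notes that $t$ maps to the generator of the abelianisation. One cosmetic point: with the usual left action on column vectors $t$ fixes $(1,0)$ and swaps $(0,1)$ with $(1,1)$ (your description corresponds to the action on row vectors), but in either convention $t$ is a transposition, so the argument is unaffected.
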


\begin{proof}
For $n=1$ this is trivial, and for $n=3$ it follows from the fact
that $\GL_3(\F_2)=\SL_3(\F_2)$ is perfect.  For $n=2$, we simply observe
that $\GL_2(\F_2)$ is a dihedral group of order $6$ generated by
the involutions
\[
\begin{pmatrix} 1 & 1 \\ 0 & 1\end{pmatrix}
\quad\text{and}\quad
\begin{pmatrix} 0 & 1 \\ 1 & 0\end{pmatrix},
\]
so that the abelianization is a group of order $2$ generated
by either of the involutions.
\end{proof}

\begin{lemma}\label{lemma-low-autfn}
Define elements of $\Aut(F_n)$, $n=1,2,3$ as follows.
For $n=1,2,3$ let $s_i$ denote the transformation that inverts
the first letter and fixes the others.
And let $t\in\Aut(F_2)$ denote the transformation
$x_1\mapsto x_1$, $x_2\mapsto x_1x_2$.
Use the same symbols to denote the corresponding elements of
$H_1(\Aut(F_n);\Z) = Aut(F_n)_\mathrm{ab}$.
Then the $H_1(\Aut(F_n);\Z)$ for $n=1,2,3$
are elementary abelian $2$-groups with generators
$s_1\in H_1(\Aut(F_1);\Z)$, $s_2,t\in H_1(\Aut(F_2);\Z)$ 
and $s_3\in H_1(\Aut(F_3);\Z)$,
and the stabilisation maps have the following effect.
\[\xymatrix@R=5 pt@C=35 pt{
	H_1(\Aut(F_1);\Z)
	\ar[r]^{s_\ast}	
	&
	H_1(\Aut(F_2);\Z)	
	\ar[r]^{s_\ast}
	&
	H_1(\Aut(F_3);\Z)
	\\
	s_1
	\ar@{|->}[r]	
	&
	s_2
	\ar@{|->}[r]	
	&
	s_3
	\\	
	{}
	&
	t\ar@{|->}[r]
	&
	0
}\]
\end{lemma}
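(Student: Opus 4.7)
The plan is to deduce the lemma from its $\GL_n(\Z)$ counterpart (Lemma~\ref{lemma-low-glnz}) via the natural homomorphism
\[
\pi_n\colon \Aut(F_n)\longrightarrow \GL_n(\Z)
\]
induced by abelianizing $F_n$. Three features of $\pi_n$ are immediate: it is surjective for $n=1,2,3$, it commutes with the stabilization maps on both sides (since stabilization is determined on generators and both stabilizations simply fix the freshly added coordinate), and on $F_n^{\mathrm{ab}}=\Z^n$ the automorphisms named $s_n$ and $t$ in $\Aut(F_n)$ map to the matrices named $s_n$ and $t$ in $\GL_n(\Z)$. Hence it suffices to prove that $(\pi_n)_\ast\colon H_1(\Aut(F_n);\Z)\to H_1(\GL_n(\Z);\Z)$ is an isomorphism for $n=1,2,3$; combined with Lemma~\ref{lemma-low-glnz}, this forces the $H_1(\Aut(F_n);\Z)$ to be elementary abelian $2$-groups with the claimed generators, and the stabilization diagram to commute as stated.

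For $n=1$ there is nothing to do, since $\pi_1$ is the identity on $\{\pm 1\}$. For $n=2,3$ the plan is to invoke Nielsen's classical presentation of $\Aut(F_n)$ by inversions, transpositions and transvections. In the abelianization all inversions are identified (they are pairwise conjugate via transpositions), all transpositions are identified, and the Nielsen relations force the transvection class to coincide with its inverse. This reads off directly as $\Aut(F_2)^{\mathrm{ab}}\cong(\Z/2)^2$ (generated by the common classes of the inversion $s_2$ and the transvection $t$) and $\Aut(F_n)^{\mathrm{ab}}\cong\Z/2$ for $n\geqslant 3$ (since for $n\geqslant 3$ the additional Nielsen relations express each transvection class and the transposition class as commutators, so only the sign class $s_n$ survives). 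Comparing with Lemma~\ref{lemma-low-glnz}, one sees that $(\pi_n)_\ast$ is an isomorphism in both cases.

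The main obstacle is the computation of $\Aut(F_n)^{\mathrm{ab}}$ for $n=2,3$; this is the part of the argument that is not formal. One could alternatively avoid a full Nielsen presentation by using the extension
\[
1\longrightarrow \Inn(F_n)\longrightarrow \Aut(F_n)\xrightarrow{\ \pi_n\ }\GL_n(\Z)\longrightarrow 1
\quad (n=2)
\]
or the analogous sequence involving the IA-group for $n=3$, together with the five-term exact sequence in group homology, and checking by hand that the image of $H_1(\Inn(F_n);\Z)$, respectively $H_1(\mathrm{IA}_n;\Z)$, in $H_1(\Aut(F_n);\Z)$ vanishes, because each generating inner automorphism (respectively Magnus generator) can be exhibited as a commutator of Nielsen transformations. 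Either route yields the required isomorphism and completes the reduction to Lemma~\ref{lemma-low-glnz}.
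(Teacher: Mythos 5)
Your overall strategy coincides with the paper's: both arguments reduce the lemma to Lemma~\ref{lemma-low-glnz} by showing that the linearisation map $\Aut(F_n)\to\GL_n(\Z)$ induces an isomorphism on abelianisations for $n=1,2,3$, sends the named generators to the named generators, and commutes with stabilisation. The only divergence is in how that isomorphism is established. The paper treats $n=2$ via the extension $F_2\to\Aut(F_2)\to\GL_2(\Z)$ (using $\mathrm{Out}(F_2)\cong\GL_2(\Z)$) and the vanishing of the coinvariants $((F_2)_{\mathrm{ab}})_{\GL_2(\Z)}$, and treats $n=3$ via perfectness of $\SL_3(\Z)$ and of the determinant-one subgroup $SA_3\leqslant\Aut(F_3)$, quoting Gersten's presentation; you instead compute $\Aut(F_n)_{\mathrm{ab}}$ directly from Nielsen's presentation (your ``alternative route'' for $n=2$ is essentially the paper's argument recast as a five-term exact sequence). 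Both routes are legitimate, and since $\pi_n$ is surjective you really only need the upper bound on $\Aut(F_n)_{\mathrm{ab}}$ that the presentation provides, the lower bound being supplied by Lemma~\ref{lemma-low-glnz} itself.

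One detail in your sketch is incorrect, although it does not affect the conclusion: for $n\geqslant 3$ the transposition class is \emph{not} a commutator, since a transposition acts on $\Z^n$ with determinant $-1$ and so has nontrivial image in $\Aut(F_n)_{\mathrm{ab}}$. What the Nielsen relations actually give (by writing a transposition as an inversion times transvections) is that the transposition class coincides with the inversion class $s_n$ modulo commutators, while the transvection classes do vanish for $n\geqslant 3$; the abelianisation is therefore still $\Z/2$ generated by $s_n$, as you claim, but you should correct the justification.
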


\begin{proof}
The linearisation map $\Aut(F_n)\to\GL_n(\Z)$ is an isomorphism
on abelianisations for all $n$.  In the case $n=1$ this is
because the map itself is an isomorphism.  In the case $n=2$
this is because the map $\mathrm{Out}(F_2)\to\GL_2(\Z)$ is an
isomorphism, so there is an extension
$F_2\to\Aut(F_2)\to\GL_2(\Z)$ in which the action of 
$\GL_2(\Z)$ on $(F_2)_\mathrm{ab}=\Z^2$ is the tautological one,
so that the coinvariants $((F_2)_\mathrm{ab})_{\GL_2(\Z)}$ vanish,
and the claim follows.  And for $n\geqslant 3$ this is because
$\SL_n(\Z)$ is perfect, as is the subgroup $SA_n$ of $\Aut(F_n)$
consisting of automorphisms with determinant one.
(For the last claim we refer to the presentation of $SA_n$
given in Theorem~2.8 of~\cite{Gersten}.)
The linearisation map sends the generators $s_1,s_2,s_3,t$
listed here to the corresponding generators from Lemma~\ref{lemma-low-glnz},
so the claim follows.
\end{proof}

\begin{proof}[Proof of Theorem~\ref{glnz-outside}]
Let $\F$ be a field of characteristic $2$.
We will use the K\"unneth isomorphism $H_1(-;\F)\cong H_1(-;\Z)\otimes\F$
without further mention.
Theorem~\ref{theorem-kernel} states that the kernel of the map
\begin{equation}
\label{stabilisation-map-one}
	s_\ast\colon H_m(\GL_{2m}(\Z);\F)
	\twoheadrightarrow H_m(\GL_{2m+1}(\Z);\F)
\end{equation}
is the image of the product map
\[
	H_{1}(\GL_{2}(\Z);\F)^{\otimes {m-1}}
	\otimes\ker[H_1(\GL_2(\Z);\F)\xrightarrow{s_\ast} H_1(\GL_3(\Z);\F)]
	\longrightarrow
	H_{m}(\GL_{2m}(\Z);\F).
\]
By Lemma~\ref{lemma-low-glnz},
$H_1(\GL_2(\Z);\F)$ is spanned by the classes $s_2$ and $t$,
and $\ker[H_1(\GL_2(\Z);\F)\xrightarrow{s_\ast} H_1(\GL_3(\Z);\F)]$
is spanned by $t$.
Any product involving both $s_2$ and $t$ vanishes, since
$s_2\cdot t = s_\ast(s_1)\cdot t = s_1\cdot s_\ast(t)=0$.
So it follows that the image of the given product map is precisely
the span of $t^m$, which gives us the claimed description of
of kernel of~\eqref{stabilisation-map-one}.
Next, Theorem~\ref{theorem-realisation} states that the map 
\[
	H_m(\GL_{2m-1}(\Z);\F)\oplus H_1(\GL_2(\Z);\F)^{\otimes m}
	\twoheadrightarrow H_m(\GL_{2m}(\Z);\F)
\]
is surjective.   The second summand of the domain is spanned
by the words in $s_2$ and $t$, 
but the image of any word involving $s_2=s_\ast(s_1)$
lies in the image of $H_m(\GL_{2m-1}(\Z);\F)$.
Thus the image of the given map is in fact spanned by the image
of $H_m(\GL_{2m-1}(\Z);\F)$ and of $t^m$, as required.
\end{proof}

\begin{proof}[Proof of Theorem~\ref{theorem-glnftwo}]
Since $H_m(\GL_{2m+1}(\F_2);\F_2)$ vanishes,
Theorem~\ref{theorem-kernel} shows that $H_m(\GL_{2m}(\F_2);\F_2)$
is spanned by the image of
\[H_1(\GL_2(\F_2);\F_2)^{\otimes(m-1)}\otimes \ker[s_\ast\colon
H_1(\GL_2(\F_2);\F_2)\to H_1(\GL_3(\F_2);\F_2)].\]
But by Lemma~\ref{lemma-low-glnftwo}, this image is precisely the span
of $t^m$.
\end{proof}

\begin{proof}[Proof of Theorem~\ref{theorem-autfn-stability}]
The first claim is immediate from Theorem~\ref{theorem-stability}.
For the second claim, when $\mathrm{char}(\F)\neq 2$ 
we have $H_1(\Aut(F_2);\F)=0$
by Lemma~\ref{lemma-low-autfn}, so that Theorem~\ref{theorem-kernel}
shows that $s_\ast\colon H_\ast(G_{n-1})\to H_\ast(G_n)$ is injective
for $\ast=\frac{n-1}{2}$, and Theorem~\ref{theorem-realisation} shows
that $s_\ast \colon H_\ast(G_{n-1})\to H_\ast(G_n)$ is surjective
for $\ast =\frac{n}{2}$.
\end{proof}

\begin{proof}[Proof of Theorem~\ref{theorem-autfn-outside}]
This is entirely analogous to the proof of Theorem~\ref{glnz-outside},
this time making use of Lemma~\ref{lemma-low-autfn}.
\end{proof}

\section{The splitting complex}
\label{section-scn}

In this section we identify the {realisation} of
the splitting poset $\SP_n$ with the realisation of a semisimplicial
set that we call the `splitting complex'.
It is the splitting complex, rather than the splitting poset,
that will feature in our arguments from this section onwards.
In this section we will make use of semisimplicial sets;
see section~2 of~\cite{RW} for a general discussion of semisimplicial
sets (and spaces) and their realisations.

We have borrowed the name `splitting complex' from work in progress
of Galatius, Kupers and Randal-Williams.  See also the remarks
after Theorem~\ref{theorem-stability}.

\begin{definition}[The splitting complex]\label{sc-definition}
Let $n\geqslant 2$.
The \emph{$n$-th splitting complex} of a family 
of groups with multiplication $(G_p)_{p\geqslant 0}$
is the semisimplicial set $\SC_n$ defined as follows.
Its set of $r$-simplices is
\[
	(\SC_n)_r
	=
	\bigsqcup_{\substack{q_0+\cdots+q_{r+1}=n 
	\\ q_0,\ldots,q_{r+1}\geqslant 1}} 
	\frac{G_n}{G_{q_0}\times\cdots\times G_{q_{r+1}}}
\]
if $r\leqslant n-2$, and is empty otherwise.
And the $i$-th face map
\[
	d_i\colon (\SC_n)_r\longrightarrow(\SC_n)_{r-1},
\]
is defined by
\[
	d_i(g (G_{q_0}\times\cdots\times G_{q_{r+1}}))
	=g (G_{q_0}\times\cdots\times G_{q_i+q_{i+1}}
	\times\cdots\times G_{q_{r+1}})
\]
for $g\in G_n$.
\end{definition}

\begin{example}
Figure~\ref{scfigure} illustrates the splitting complex $\SC_4$.
\begin{figure}\[\xymatrix@C=50pt@R=50pt{
 	\displaystyle\frac{G_4}{G_3\times G_1} 
	&
	\displaystyle\frac{G_4}{G_2\times G_1\times G_1}
	\ar[dl]^(0.3){d_1}\ar[l]_(0.45){d_0}
	\\
	\displaystyle\frac{G_4}{G_2\times G_2}
	& 
	\displaystyle\frac{G_4}{G_1\times G_2\times G_1}
	\ar[ul]|\hole_(0.3){d_0}\ar[dl]|\hole^(0.3){d_1}
	&
	\displaystyle\frac{G_4}{G_1\times G_1\times G_1\times G_1}
	\ar[ul]_(0.3){d_0}\ar[l]_(0.45){d_1}\ar[dl]^(0.3){d_2}
	\\
	\displaystyle\frac{G_4}{G_1\times G_3}
	&
	\displaystyle\frac{G_4}{G_1\times G_1\times G_2}
	\ar[ul]_(0.3){d_0}\ar[l]^(0.45){d_1}
}\]
\caption{The splitting complex $\SC_4$}
\label{scfigure}
\end{figure}
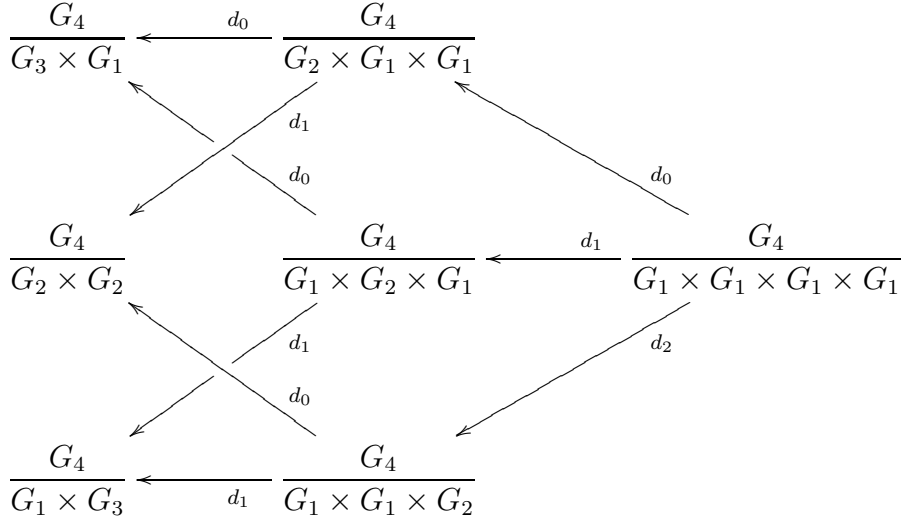
Taking the disjoint union of the terms in each column
gives the $0$-, $1$- and $2$-simplices.
And the arrows leaving each term represent the face maps on that
term, ordered from top to bottom.
\end{example}

\begin{remark}
In the expression $G_{q_0}\times\cdots\times G_{q_{r+1}}$
appearing in Definition~\ref{sc-definition}, we can imagine the
symbols $\times$ as being labelled from $0,\ldots,r$,
so that the $i$-th face map $d_i$ simply
`erases the $i$-th $\times$'.
\end{remark}

Let $P$ be a poset.  The \emph{semisimplicial nerve}
$NP$ of $P$ is defined to be the semisimplicial set whose
$r$-simplices are the chains $p_0<\cdots<p_r$ of length $(r+1)$
in $P$, and whose face maps are defined by
$d_i(p_0<\cdots<p_r) = p_0<\cdots\widehat{p_i}\cdots<p_r$.
The realisation $\|NP\|$ of the semisimplicial nerve is naturally
homeomorphic to the realisation $|P|$ of the poset.

\begin{proposition}
\label{spntoscn}
Let $(G_p)_{p\geqslant 0}$ be a family of groups with multiplication
and let $n\geqslant 2$.
Then $\SC_n\cong N(\SP_n)$.
In particular $|\SP_n|\cong \|\SC_n\|$.
\end{proposition}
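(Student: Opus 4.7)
The plan is to construct an explicit isomorphism $\phi \colon \SC_n \to N(\SP_n)$ of semisimplicial sets; the claim about realisations is then automatic from the standard identification $\|N(P)\| \cong |P|$ for any poset $P$.

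First I would define $\phi$ on $r$-simplices by
\[
	\phi\bigl(g(G_{q_0}\times\cdots\times G_{q_{r+1}})\bigr)
	=
	\Bigl[g(G_{q_0}\times G_{n-q_0})
	<g(G_{q_0+q_1}\times G_{n-q_0-q_1})
	<\cdots
	<g(G_{q_0+\cdots+q_r}\times G_{n-q_0-\cdots-q_r})\Bigr].
\]
Intuitively, the $i$-th element of the chain is obtained by keeping only the $i$-th occurrence of $\times$ and merging everything on either side of it. Well-definedness amounts to showing this is independent of the coset representative $g$: replacing $g$ by $g(h_0\diamond\cdots\diamond h_{r+1})$ with $h_j\in G_{q_j}$ changes the $i$-th entry of the chain by an element of $G_{q_0+\cdots+q_i}\times G_{n-q_0-\cdots-q_i}$, by associativity, so leaves the chain entry unchanged. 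That each consecutive pair in the image actually satisfies the relation $<$ in $\SP_n$ is built into the definition of the partial order, with $k=g$ serving as the common witness; strictness is clear from $q_{i+1}\geqslant 1$.

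Next I would show that $\phi$ is bijective on $r$-simplices. For surjectivity, take any chain $p_0<\cdots<p_r$ in $\SP_n$; by Lemma~\ref{chain-lemma} we may write $p_i = g(G_{s_i}\times G_{n-s_i})$ for a common $g$, and strictness forces $s_0<s_1<\cdots<s_r$. Then the simplex with $q_0=s_0$, $q_i=s_i-s_{i-1}$ for $1\leqslant i\leqslant r$, and $q_{r+1}=n-s_r$ (all positive) maps to the given chain. For injectivity, suppose two simplices $g(G_{q_0}\times\cdots\times G_{q_{r+1}})$ and $g'(G_{q_0}\times\cdots\times G_{q_{r+1}})$ (necessarily with the same partition, reading off sizes from the chain) have the same image. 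Then $g^{-1}g'$ lies in $\bigcap_{i=0}^{r}(G_{s_i}\times G_{n-s_i})$, which by iterated application of the intersection axiom equals $G_{q_0}\times G_{q_1}\times\cdots\times G_{q_{r+1}}$, so the two cosets coincide.

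Finally I would verify that $\phi$ commutes with face maps. The $i$-th face in $\SC_n$ merges $G_{q_i}$ and $G_{q_{i+1}}$, i.e.\ erases the $i$-th $\times$; in terms of the chain, this simply deletes the $i$-th entry, which is exactly the $i$-th face operation on $N(\SP_n)$. The main obstacle, such as it is, is the injectivity step, where the intersection axiom is indispensable (and is in fact the point of axiom~(5) in Definition~\ref{definition-families}); the iterated intersection identity should be handled by a small induction on $r$, and is the only step whose absence would cause the proposition to fail.
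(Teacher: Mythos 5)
Your proposal is correct and follows essentially the same route as the paper: you construct the same map $\phi$, deduce surjectivity from Lemma~\ref{chain-lemma}, and deduce injectivity from the iterated form of the intersection axiom, merely filling in the well-definedness and face-map checks that the paper leaves to the reader.
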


\begin{proof}
Let $\phi\colon \SC_n\to N(\SP_n)$ denote the map that sends an $r$-simplex
$g(G_{q_0}\times\cdots\times G_{q_{r+1}})$ of $\SC_n$ to the $r$-simplex
\[	
	g(G_{q_0}\times G_{q_1+\cdots+q_{r+1}})
	<
	g(G_{q_0+q_1}\times G_{q_2+\cdots+q_{r+1}})
	<
	\cdots
	<
	g(G_{q_0+\cdots +q_r}\times G_{q_{r+1}})
\]
of $N(\SP_n)$.  One can verify that $\phi$ is indeed a semi-simplicial map.
Surjectivity follows from Lemma~\ref{chain-lemma}.
Injectivity follows from the fact that
\[
	\bigcap_{i=0}^r G_{q_0+\cdots+q_i}\times G_{q_{i+1}+\cdots+q_{r+1}}
	=
	G_{q_0}\times\cdots\times G_{q_{r+1}},
\]
which follows by induction from the intersection axiom.
\end{proof}

\begin{remark}[Splitting posets or splitting complexes?]
\label{spnorscn}
The results of this section show that if we wish
we could replace $|\SP_n|$ with $\|\SC_n\|$ in the statements of
Theorems~\ref{theorem-stability},
\ref{theorem-kernel} and~\ref{theorem-realisation}.
In doing so, we could jettison the intersection axiom
from Definition~\ref{definition-families}, 
possibly admitting more examples in the process.
However, it is arguably simpler to work with the splitting poset,
and that was certainly the case in sections~\ref{section-spn} 
and~\ref{section-connectivity} where we studied specific examples.
Moreover, the examples of interest to us here all satisfy the intersection
axiom.
We therefore decided to write our paper with 
splitting posets at the forefront.
\end{remark}

\section{A bar construction}
\label{section-bar}

In this section we introduce a variant of the bar construction
which takes as its input an algebra like $\bigoplus_{p\geqslant 0} H_\ast(G_p)$
and produces a graded chain complex (that is, a chain complex of graded
vector spaces) called $\B_n$.
We will see in the next section that $\B_n$ is the $E^1$-term of the
spectral sequence around which all of our proofs revolve.
We fix a field $\F$ throughout.

For the purposes of this section we fix a field $\F$
and a commutative graded $\F$-algebra $A$ equipped with an
additional grading that we call the \emph{charge}.
Thus
\[
	A=\bigoplus_{p\geqslant 0} A_p
\]
where $A_p$ is the part of $A$ with \emph{charge} $p$.
We will call the natural grading of $A$ the \emph{topological}
grading, and we will suppress it from the notation wherever possible.
We require that the multiplication on $A$ respects the charge grading,
and that each charge-graded piece $A_p$ is concentrated in non-negative
degrees. 
We further require that $A_0$ is a copy of $\F$ concentrated in 
topological degree $0$ and (necessarily) generated by the unit element $1$.
In particular, $A$ is augmented.
Finally we assume that $(A_1)_0$, the part of $A$ of charge $1$ and topological
degree $0$, is a copy of $\F$ generated by an element $\sigma$.

\begin{example}
Our only examples of such algebras will be 
\[
	A=\bigoplus_{p\geqslant 0} H_\ast(G_p)
\]
where $(G_p)_{p\geqslant 0}$ is a family of groups with multiplication.
Here the topological grading is the grading of homology,
and the charge grading is obtained from the multiplicative family.
The element $\sigma\in (A_1)_0 = H_0(G_1)$ is defined to be the standard
generator.
\end{example}

\begin{definition}[The chain complex $\B_n$]
Let $A$ be an $\F$-algebra as described at the start of the section.
For $n\geqslant 0$ we define 
$\B_n$ to be the chain complex of {graded}
abelian groups whose $b$-th term is
\[
	(\B_n)_b
	=
	\bigoplus_{\substack{q_0+\cdots+q_b=n\\q_0,\ldots,q_b\geqslant 1}}
	A_{q_0}\otimes\cdots\otimes A_{q_b}
\]
and whose differential is defined by
\[
	d_b(x_0\otimes\cdots\otimes x_b)
	=
	\sum_{i=0}^{b-1}
	(-1)^i x_0\otimes\cdots\otimes x_i\cdot x_{i+1}
	\otimes\cdots\otimes x_b.
\]
For $n=0$ we define $\B_0$ by letting all groups vanish except for $(\B_0)_0$,
which consists of a single copy of $\F$.

Note that $\B_n$ is bigraded.
Its  \emph{homological} grading is the 
grading that is explicit in the definition,
and which is reduced by the differential $d_b$.
Its \emph{topological} grading is the grading obtained
from the topological grading of $A$, and is preserved by the differential $d_b$.
We say that the part of $\B_n$ with homological grading $b$
and topological grading $d$ lies in \emph{bidegree $(b,d)$}.
We reserve the notation $(\B_n)_b$ for the part of $\B_n$
that lies in homological degree $b$.
\end{definition}

\begin{remark}[$\B_n$ and the bar complex]
\label{remark-bar-one}
Regarding $\F$ as a left and right $A$-module \emph{via}
the projection $A\to (A_0)_0=\F$, we may form the two-sided
normalised bar complex $B(\F,\bar A,\F)$
\[
	\F\otimes\F
	\longleftarrow
	\F\otimes\bar A\otimes\F
	\longleftarrow
	\F\otimes\bar A\otimes\bar A\otimes\F
	\longleftarrow
	\F\otimes\bar A\otimes\bar A\otimes\bar A\otimes\F
	\cdots
\]
or, more simply,
\[
	\F
	\longleftarrow
	\bar A
	\longleftarrow
	\bar A\otimes\bar A
	\longleftarrow
	\bar A\otimes\bar A\otimes\bar A
	\longleftarrow
	\cdots
\]
where all tensor products are over $\F$.
This is naturally \emph{trigraded}: there is the homological grading explicit
in the the expressions above, together with charge and topological
gradings inherited from $A$.
Writing $[B(\F,\bar A,\F)]_{\mathrm{charge}=n}$ for the homogeneous piece
with charge grading $n$ inherited from $A$, then we have the following:
\[
	(\B_n)_b = 
	[B(\F,\bar A,\F)_{b+1}]_{\mathrm{charge}=n}.
\]
See Remark~\ref{remark-bar-two} for further discussion.
\end{remark}

\begin{example}
Here is a diagram of $\B_4$.
\[\xymatrix@C=40pt@R=40pt{
	&
 	 A_3\otimes A_1\ar[dl]
	&
	 A_2\otimes A_1\otimes A_1\ar[dl]\ar[l]
	\\
	 A_4
	&
	 A_2\otimes A_2\ar[l]
	& 
	 A_1\otimes A_2\otimes A_1
	\ar[ul]|\hole\ar[dl]|\hole
	&
	 A_1\otimes A_1\otimes A_1\otimes A_1
	\ar[ul]\ar[l]\ar[dl]
	\\
	&
	 A_1\otimes A_3\ar[ul]
	&
	 A_1\otimes A_1\otimes A_2\ar[ul]\ar[l]
}\]
The first column of the diagram represents $(\B_4)_0$, the direct sum
of the terms in the next column represent $(\B_4)_1$, and so on.
The effect of the differential $d_b$ on an
element of one of the summands is the alternating sum (taken from
top to bottom) of its images under the arrows exiting that
summand.
The arrows are all constructed using the product of $A$ in the 
evident way. 
\end{example}

\section{The spectral sequence}
\label{section-spectral}

The complex $\B_n$ is our main tool in proving the theorems
stated in the introduction.  The aim of the present section
is to prove the following result, which demonstrates
the connection between $\B_n$ and the splitting poset.
Throughout this section we fix a family of groups with multiplication
$(G_p)_{p\geqslant 0}$ and the algebra $A=\bigoplus H_\ast(G_p)$,
which is of the kind described at the start of section~\ref{section-bar}.
Throughout this section homology is to be taken with coefficients
in an arbitrary field $\F$.

\begin{theorem}
\label{spectral-sequence-theorem}
Let $(G_p)_{p\geqslant 0}$ be a family of groups with multiplication
that satisfies the connectivity axiom, and let
$A=\bigoplus_{p\geqslant 0} H_\ast(G_p)$.
Then there is a first quadrant spectral sequence with $E^1$-term
\[
	(E^1,d^1) = (\B_n,d_b)
\]
for which $E^\infty$ vanishes in bidegrees $(b,d)$ satisfying
$b+d\leqslant (n-2)$.
\end{theorem}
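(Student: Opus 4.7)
The plan is to realise the spectral sequence as one of the two standard spectral sequences of a Borel-type double complex built from the action of $G_n$ on the augmented semisimplicial chain complex of $\SC_n$. Proposition~\ref{spntoscn} identifies $\|\SC_n\|$ with $|\SP_n|$, so the connectivity axiom translates into the vanishing $\tilde H_j(\SC_n) = 0$ for $j \leqslant n-3$; this is the input that will bound the abutment.

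Concretely, I would pick a free resolution $P_\ast \to \F$ over $\F[G_n]$, write $\tilde C_\ast(\SC_n)$ for the augmented chain complex (with $\tilde C_{-1}(\SC_n) = \F$), and form the double complex $D_{i,j} = P_i \otimes_{\F[G_n]} \tilde C_j(\SC_n)$ indexed by $i \geqslant 0$ and $j \geqslant -1$. Filtering the total complex by columns (by $i$) yields, since each $P_i$ is projective,
\[
	E^2_{i,j} \;=\; H_i\bigl(G_n;\, \tilde H_j(\SC_n)\bigr),
\]
which vanishes for $j\leqslant n-3$ by the connectivity axiom. Consequently $H_\ast(\mathrm{Tot}\,D)$ vanishes in all total degrees $i+j\leqslant n-3$. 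Filtering instead by rows (by $j$) gives $E^1_{j,i} = H_i(G_n;\, \tilde C_j(\SC_n))$. For $j\geqslant 0$ the chain module $\tilde C_j(\SC_n)$ is a direct sum of permutation modules $\F[G_n/(G_{q_0}\times\cdots\times G_{q_{j+1}})]$, while for $j=-1$ it equals $\F$; Shapiro's lemma combined with the K\"unneth formula (valid because $\F$ is a field) identifies
\[
	E^1_{j,i}
	\;=\!\!
	\bigoplus_{q_0+\cdots+q_{j+1}=n}
	\!\!H_i(G_{q_0}\times\cdots\times G_{q_{j+1}})
	\;=\;
	\bigl((\B_n)_{j+1}\bigr)_i,
\]
with the case $j=-1$ recovering $H_i(G_n) = A_n = ((\B_n)_0)_i$. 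The $d^1$-differential is the alternating sum of the face maps of $\SC_n$, and the $k$-th face map is induced on cosets by the subgroup inclusion $G_{q_0}\times\cdots\times G_{q_{j+1}} \hookrightarrow G_{q_0}\times\cdots\times G_{q_k+q_{k+1}}\times\cdots\times G_{q_{j+1}}$. Under Shapiro--K\"unneth this inclusion acts as the identity on every tensor factor except the $k$-th and $(k+1)$-th, on which it realises the product $A_{q_k}\otimes A_{q_{k+1}} \to A_{q_k+q_{k+1}}$ that defines the algebra structure on $A = \bigoplus_p H_\ast(G_p)$. Summed with the appropriate signs, this is exactly the differential $d_b$ of $\B_n$. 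Reindexing by $b = j+1$ and $d = i$ places the spectral sequence in the first quadrant with $E^1 = \B_n$, and the original bound $i+j\leqslant n-3$ becomes the required vanishing of $E^\infty_{b,d}$ for $b+d\leqslant n-2$.

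The principal technical obstacle will be the compatibility of $d^1$ with the product in $\B_n$. What has to be verified is that the corestriction map on group homology induced by each of the subgroup inclusions above really does realise the Pontryagin product on the two relevant tensor factors, with the identity elsewhere. This is essentially naturality of the K\"unneth isomorphism together with the tautological fact that the algebra structure on $A$ is defined via the inclusions $G_p\times G_q \hookrightarrow G_{p+q}$; but the bookkeeping with signs, with the shift $b = j+1$, and with the augmented row $j=-1$ sitting at the edge of the first quadrant will need to be done carefully.
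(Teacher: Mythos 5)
Your proof is correct, and the conclusion follows with the right indexing, but it reaches the spectral sequence by a more algebraic route than the paper. The paper never chooses a free resolution or invokes Shapiro's lemma: it constructs an augmented semisimplicial \emph{space} $t\B_n$ whose $r$-simplices are the products $BG_{q_0}\times\cdots\times BG_{q_{r+1}}$ and whose face maps are the product maps on classifying spaces; the desired spectral sequence is the (shifted) spectral sequence of this augmented semisimplicial space, converging to $H_\ast(\|t\B_n\|)$, with $E^1\cong\B_n$ obtained from the K\"unneth theorem just as in your last step. The vanishing of the abutment is then proved topologically: a levelwise homotopy equivalence $t\B_n\simeq EG_n\times_{G_n}\SC_n^+$ (where $\SC_n^+$ adjoins a $(-1)$-simplex) identifies $\|t\B_n\|$ with a Borel-type quotient of $\|\SC_n^+\|$, which is $(n-2)$-connected because $\|\SC_n^+\|$ is the suspension of $\|\SC_n\|\cong|\SP_n|$. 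Your double complex $P_\ast\otimes_{\F[G_n]}\tilde C_\ast(\SC_n)$ is the chain-level shadow of that Borel construction: your column filtration, giving $E^2_{i,j}=H_i(G_n;\tilde H_j(\SC_n))$, replaces the paper's connectivity-of-the-realisation argument, and your row filtration with Shapiro plus K\"unneth replaces the paper's levelwise identification of $(t\B_n)_{r}$ with products of classifying spaces. What your route buys is that everything stays inside homological algebra (no semisimplicial spaces or realisations of maps of them), and it makes explicit both where field coefficients enter and why the $G_n$-module structure on $\tilde H_j(\SC_n)$ is irrelevant; the cost is precisely the bookkeeping you flag, namely checking by naturality of Shapiro and K\"unneth that $d^1$ is the alternating sum of Pontryagin products, a point the paper gets essentially for free because its $E^1$ is by construction the homology of spaces whose face maps literally induce the products defining $A$. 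Both arguments rest on the same inputs — Proposition~\ref{spntoscn}, the $(n-3)$-connectivity of $|\SP_n|$, and K\"unneth — and your reindexing $b=j+1$, $d=i$ is exactly the paper's ``shift of grading'', so the vanishing ranges coincide.
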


\begin{remark}[The spectral sequence and $\tor$]
\label{remark-bar-two}
In Remark~\ref{remark-bar-one}, we identified $\B_n$ in terms
of a two-sided bar complex.  It follows that we may therefore
identify the $E^2$-term of the above spectral sequence in terms
of a $\tor$ group:
\[
	E^2_{i,j}=
	\tor^A_{i+1}(\F,\F)_
	{\substack{\mathrm{charge}=n\\ \mathrm{topological}=j}}
\]
This observation potentially allows us to use the machinery
of derived functors to understand
the $E^2$-term of our spectral sequence.
We do \emph{not} do this in the present version of this paper.
Instead, our arguments are all done explicitly on the level
of $\B_n$ itself.  
We hope that in a future version of this paper we will 
rephrase our arguments in terms of $\tor$ wherever possible.
\end{remark}

The rest of the section is devoted to the proof of 
Theorem~\ref{spectral-sequence-theorem}.
To begin, we introduce a topological analogue of $\B_n$.
Observe that the multiplication map $G_a\times G_b\to G_{a+b}$
induces a map of classifying spaces $BG_a\times BG_b\to BG_{a+b}$.
We call it the \emph{product map on classifying spaces}
and denote it by $(x,y)\mapsto x\cdot y$.
We will use the product maps on classifying spaces
to create an augmented semisimplicial
space from which we can recover $\B_n$.
See section~2 of~\cite{RW} for conventions about semisimplicial
spaces, augmented semisimplicial spaces, and their realisations.

\begin{definition}[The augmented semisimplicial space $t\B_n$]
Given a family of groups with multiplication $(G_p)_{p\geqslant 0}$,
and given $n\geqslant 2$, we let $t\B_n$ denote the augmented
semisimplicial set whose set of $r$-simplices is given by
\[
	(t\B_n)_r
	=
	\bigsqcup_{\substack{q_0+\cdots+q_{r+1}=n 
	\\ q_0,\ldots,q_{r+1}\geqslant 1}} 
	BG_{q_0}\times\cdots\times BG_{q_{r+1}}
\]
for $r=-1,\ldots,(n-2)$, and which is empty otherwise.
The face map $d_i\colon (t\B_n)_r\to (t\B_n)_{r-1}$ is defined by
\[
	d_i(x_0,\ldots,x_{r+1}) = (x_0,\ldots,x_i\cdot x_{i+1},\ldots,x_{r+1}),
\]
where $\cdot$ denotes the product map on classifying spaces.
\end{definition}

\begin{example}
Here is a diagram of $t\B_4$.
\[\xymatrix@C=40pt@R=40pt{
	&
 	\scriptstyle BG_3\times BG_1\ar[dl]
	&
	\scriptstyle BG_2\times BG_1\times BG_1\ar[dl]\ar[l]
	\\
	\scriptstyle BG_4
	&
	\scriptstyle BG_2\times BG_2\ar[l]
	& 
	\scriptstyle BG_1\times BG_2\times BG_1
	\ar[ul]|\hole\ar[dl]|\hole
	&
	\scriptstyle BG_1\times BG_1\times BG_1\times BG_1
	\ar[ul]\ar[l]\ar[dl]
	\\
	&
	\scriptstyle BG_1\times BG_3\ar[ul]
	&
	\scriptstyle BG_1\times BG_1\times BG_2\ar[ul]\ar[l]
}\]
The four columns correspond to the $r$-simplices of $t\B_4$
for $r=-1,0,1,2$ respectively, the 
disjoint union of the terms in a column being the space of simplices
of the relevant dimension.
\end{example}

The next proposition shows the sense in which $t\B_n$ is a topological
analogue of $\B_n$.

\begin{proposition}[From $t\B_n$ to $\B_n$]
\label{spectral-sequence-proposition}
There is a spectral sequence with $E_1$-term
\[
	(E^1, d^1)
	=
	(\B_n, d_b)
\]
and converging to $H_\ast(\|t\B_n\|)$.
\end{proposition}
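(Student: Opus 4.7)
My plan is to invoke the standard spectral sequence associated to the skeletal filtration of the realization of a semisimplicial space, applied to the augmented semisimplicial space $t\B_n$. First I would equip $\|t\B_n\|$ with its skeletal filtration $F_{-1} \subseteq F_0 \subseteq \cdots \subseteq F_{n-2} = \|t\B_n\|$, where $F_r$ is the realization of the sub-object of simplices of dimension at most $r$, including the augmentation at level $r = -1$. The cofibration sequences $F_{r-1} \hookrightarrow F_r \to F_r/F_{r-1}$ together with the standard identification $F_r/F_{r-1} \simeq \Sigma^{r+1} (t\B_n)_{r,+}$ yield a homological spectral sequence with
\[
	E^1_{r,q} = H_q((t\B_n)_r) = \bigoplus_{\substack{q_0 + \cdots + q_{r+1} = n\\ q_0,\ldots,q_{r+1} \geqslant 1}} H_q(BG_{q_0} \times \cdots \times BG_{q_{r+1}})
\]
converging to $H_{r+q}(\|t\B_n\|)$.

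Next I would apply the Künneth theorem which, because we are working over a field, splits each summand cleanly as $A_{q_0} \otimes \cdots \otimes A_{q_{r+1}}$, with the topological $q$-grading assembled additively from the tensor factors. After the reindexing $b = r+1$, the $E^1$-term agrees as a bigraded vector space with $(\B_n)_b$.

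The final step is to identify the $d^1$ differential with $d_b$. The face map $d_i$ of $t\B_n$ is given in positions $i$ and $i+1$ by the product map $BG_{q_i} \times BG_{q_{i+1}} \to BG_{q_i + q_{i+1}}$ and by the identity elsewhere. By the very definition of the $\F$-algebra structure on $A = \bigoplus_p H_\ast(G_p)$, the map this induces on homology is the multiplication $A_{q_i} \otimes A_{q_{i+1}} \to A_{q_i + q_{i+1}}$ tensored with the identities on the other factors. The alternating sum $\sum_i (-1)^i d_i^\ast$ arising in the spectral sequence is therefore precisely the formula defining $d_b$.

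The main obstacle is the bookkeeping of the index shift between $t\B_n$ (augmented, with $r$ ranging over $-1,\ldots,n-2$) and $\B_n$ (with $b$ ranging over $0,\ldots,n-1$), together with the sign conventions, so that one really lands on $d_b$ rather than an off-by-one variant. Once these conventions are aligned with those of Section~2 of~\cite{RW}, the argument is a routine application of the skeletal spectral sequence of a semisimplicial space, with Künneth and the definition of the algebra structure on $A$ doing the rest of the work.
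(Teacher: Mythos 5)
Your argument is correct and is essentially the paper's own proof: the paper likewise invokes the standard realization (skeletal filtration) spectral sequence of the augmented semisimplicial space $t\B_n$ from Section~2.3 of~\cite{RW}, ``with a shift of grading'', and identifies the $E^1$-page with $\B_n$ via the K\"unneth isomorphism over the field $\F$ and the face maps with the multiplication on $A$. The only point to tidy is the bookkeeping you already flag: with your indexing $E^1_{r,q}=H_q((t\B_n)_r)$ the abutment is $H_{r+q+1}(\|t\B_n\|)$, since an $r$-simplex of the augmented object spans an $(r{+}1)$-cell of $\|t\B_n\|$ --- this is exactly the reindexing $b=r+1$ you perform.
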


\begin{proof}
As in section~2.3 of~\cite{RW}, but with a shift of grading,
the augmented semisimplicial set $t\B_n$
gives rise to a spectral sequence, converging to $H_\ast(\|t\B_n\|)$,
and whose $E^1$-term is given by 
\[
	E^1_{s,t} = H_t((t\B_n)_{s-1}),
\]
with $d^1$ given by the alternating sum of the maps
induced by the face maps of $t\B_n$.
Writing each $(t\B_n)_{s-1}$ as a product of spaces
and applying the K\"unneth isomorphism (which applies
because homology is taken with coefficients in the field $\F$)
we see that this is isomorphic to $\B_n$ equipped with
the differential $d_b$.
\end{proof}

\begin{proposition}
\label{splitting-to-bar}
Suppose that the realisation of the $n$-th splitting
poset $\SP_n$ is $(n-3)$-connected.
Then the realisation $\|t\B_n\|$ is $(n-2)$-connected.
\end{proposition}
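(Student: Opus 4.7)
The plan is to recognise $\|t\B_n\|$ as the mapping cone of a Borel-construction projection whose fibre is $|\SP_n|$, and then read off the connectivity from the hypothesis.

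First I would construct a levelwise equivalence of semisimplicial spaces between the unaugmented part $(t\B_n)_{\geqslant 0}$ and $EG_n\times_{G_n}\SC_n$. For each composition $(q_0,\ldots,q_{r+1})$ of $n$, the standard equivalence
\[
BG_{q_0}\times\cdots\times BG_{q_{r+1}}\;\simeq\;EG_n\times_{G_n}\bigl(G_n/(G_{q_0}\times\cdots\times G_{q_{r+1}})\bigr)
\]
assembles, summed over compositions, into an equivalence $(t\B_n)_r\simeq EG_n\times_{G_n}(\SC_n)_r$. Under this identification, the face maps on both sides are induced by the subgroup inclusions $G_{q_0}\times\cdots\times G_{q_{r+1}}\hookrightarrow G_{q_0}\times\cdots\times G_{q_i+q_{i+1}}\times\cdots\times G_{q_{r+1}}$---on $t\B_n$ these produce the product maps on classifying spaces, while on $\SC_n$ they produce coset amalgamation---so the levelwise equivalences assemble into a semisimplicial one. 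On realisations this becomes
\[
\|(t\B_n)_{\geqslant 0}\|\;\simeq\;EG_n\times_{G_n}\|\SC_n\|,
\]
and the augmentation to $(t\B_n)_{-1}=BG_n$ is exactly the Borel-construction projection.

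Next I would observe that this projection is a fibration with fibre $\|\SC_n\|\cong|\SP_n|$ by Proposition~\ref{spntoscn}. The hypothesis that $|\SP_n|$ is $(n-3)$-connected then makes the augmentation map $(n-2)$-connected.

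Finally, $\|t\B_n\|$ is homotopy equivalent to the mapping cone of its augmentation (this is exactly what is needed for the augmented spectral sequence of Proposition~\ref{spectral-sequence-proposition} to converge to $H_\ast(\|t\B_n\|)$ with $E^1_{0,\ast}=H_\ast(BG_n)$). The mapping cone of an $(n-2)$-connected map between path-connected spaces is $(n-2)$-connected, which gives the claim. For $n\geqslant 3$ this uses relative Hurewicz once simple connectivity of the cone has been established; for $n=2$ one just needs path-connectedness, which is immediate.

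The main obstacle is carrying out the first step carefully: verifying that the multiplicatively-defined face maps of $t\B_n$ and the coset-amalgamation face maps of $\SC_n$ correspond under the Borel construction, so that the levelwise equivalences truly assemble into a semisimplicial one. After that, the argument is routine fibration--cofibration bookkeeping.
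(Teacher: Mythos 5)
Your proposal is correct and takes essentially the same route as the paper: the paper likewise identifies $t\B_n$ levelwise with the Borel construction on the splitting complex (using a functorial, product-compatible model of $EG_n$), and your only deviation is cosmetic, replacing its suspension-and-quotient endgame (the basepoint inclusion $\ast\hookrightarrow\|\SC_n^+\|$ is an $(n-2)$-equivalence, so $\frac{EG_n\times_{G_n}\|\SC_n^+\|}{EG_n\times_{G_n}\ast}$ is $(n-2)$-connected) by the fibration long exact sequence together with the fact that the cone of an $(n-2)$-connected map is $(n-2)$-connected --- and these agree, since that quotient is exactly the mapping cone of the augmentation. The one point to tighten is your identification of $\|t\B_n\|$ with that mapping cone: justify it from the realisation conventions for augmented semisimplicial spaces in \cite{RW}, rather than inferring it backwards from the spectral sequence of Proposition~\ref{spectral-sequence-proposition}.
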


\begin{proof}
In order to give this proof, we must be precise about our construction
of classifying spaces.  Given a group $G$, we define $EG$ to be the
realisation of the category obtained from the action of $G$ on itself
by right multiplication.  (So it is $B\overline G$ in the notation
of~\cite{SegalClassifying}.)  Then we define $BG=EG/G$.
The map $EG\to BG$ is a locally trivial principal $G$-fibration,
and $EG$ is itself contractible.
The assignment $G\to EG$ is functorial, and respects products
in the sense that if $G$ and $H$ are groups then the map
$E(G\times H)\to EG\times EH$ obtained from the projections
is an isomorphism.  We can therefore construct a homotopy
equivalence as follows.
\begin{align*}
	BG_{q_0}\times\cdots\times BG_{q_{r+1}}
	&=
	\frac{EG_{q_0}}{G_{q_0}}\times\cdots
	\times\frac{EG_{q_{r+1}}}{G_{q_{r+1}}}
	\\
	&=
	\frac{EG_{q_0}\times\cdots\times EG_{q_{r+1}}}
	{G_{q_0}\times\cdots\times G_{q_{r+1}}}
	\\
	&\xrightarrow{\cong}
	\frac{E(G_{q_0}\times\cdots\times G_{q_{r+1}})}
	{G_{q_0}\times\cdots\times G_{q_{r+1}}}
	\\
	&\xrightarrow{\simeq}
	\frac{EG_n}
	{G_{q_0}\times\cdots\times G_{q_{r+1}}}
\end{align*}
Here the first arrow comes from the compatibility with products.
The second map comes from the iterated product map
$G_{q_0}\times\cdots\times G_{q_{r+1}}\to G_n$,
and it is a homotopy equivalence because it lifts to a map of
principal $(G_{q_0}\times\cdots\times G_{q_{r+1}})$-bundles 
whose total spaces are both contractible.
There is an isomorphism 
\[
	\frac{EG_n}
	{G_{q_0}\times\cdots\times G_{q_{r+1}}}
	\xrightarrow{\ \cong\ }
	EG_n\times_{G_n}\left(
		\frac{G_n}{G_{q_0}\times\cdots\times G_{q_{r+1}}}
	\right)
\]
sending the orbit of an element $x$ to the orbit of 
$(x,e_n(G_{q_0}\times\cdots\times G_{q_{r+1}}))$.
Combining the two maps just constructed gives us a homotopy equivalence:
\begin{equation}\label{equation-borel}
	BG_{q_0}\times\cdots\times BG_{q_{r+1}}
	\xrightarrow{\ \simeq\ }
	EG_n\times_{G_n}\left(
		\frac{G_n}{G_{q_0}\times\cdots\times G_{q_{r+1}}}
	\right)
\end{equation}
Now let $\SC_n^+$ denote the augmented semisimplicial set obtained
from $\SC_n$ by adding a single point as a $-1$-simplex.
The maps~\eqref{equation-borel} then form the components of 
a homotopy equivalence
\[
	(t\B_n)_r\xrightarrow{\ \simeq\ } EG_n\times_{G_n}(\SC_n^+)_r.
\]
These equivalences in turn assemble to a levelwise homotopy equivalence
\[
	t\B_n\xrightarrow{\ \simeq\ }EG_n\times_{G_n}\SC_n^+
\]
and consequently induce a homotopy equivalence
\[
	\|t\B_n\|\xrightarrow{\ \simeq\ }\|EG_n\times_{G_n}\SC_n^+\|.
\]

By assumption, $|\SP_n|$ is $(n-3)$-connected,
so that $\|\SC_n\|$ (to which it is isomorphic by Proposition~\ref{spntoscn})
is also $(n-3)$-connected.  Consequently 
$\|\SC_n^+\|$, which is just the suspension of $\|\SC_n\|$,
is $(n-2)$-connected.
Equivalently, the inclusion of the basepoint
$\ast\hookrightarrow\|\SC_n^+\|$ is an $(n-2)$-equivalence.
It follows that the map
$EG_n\times_{G_n}\ast\to EG_n\times_{G_n}\|\SC_n^+\|$
is also an $(n-2)$-equivalence, so that the quotient
\[
	\frac{EG_n\times_{G_n}\|\SC_n^+\|}
	{EG_n\times_{G_n}\ast}
\]
is $(n-2)$-connected.
But then
\[
	\|t\B_n\|
	\cong
	\|EG_n\times_{G_n}\SC_n^+\|
	\cong
	\frac{EG_n\times_{G_n}\|\SC_n^+\|}
	{EG_n\times_{G_n}\ast}
\]
is $(n-2)$-connected as required.
\end{proof}

\section{Relating $\B_n$ to the stabilisation maps}
\label{section-filtration}

Let $A$ be an $\F$-algebra of the kind described at the start of
section~\ref{section-bar}.  Thus $A$ has a natural topological grading
with respect to which it is commutative, it has an additional
charge grading $A=\bigoplus_{p\geqslant 0}A_p$, $A_0$ consists of
a single copy of $\F$ in topological degree $0$,
$(A_1)_0$ is a copy of $\F$ generated by an element $\sigma$,
and each piece $A_p$ is concentrated in non-negative topological degrees.

\begin{definition}[The stabilisation map.]
The \emph{stabilisation map} $s\colon A_{n-1}\to A_n$ is defined by
$s(a)= \sigma\cdot a$.
\end{definition}

\begin{example}
In the case $A=\bigoplus_{p\geqslant 0} H_\ast(G_p)$ where
$(G_p)_{p\geqslant 0}$ is a family of groups with multiplication,
we take $\sigma$ to be the standard generator of $(A_1)_0=H_0(G_1)$,
and then $s\colon A_{n-1}\to A_n$ is nothing other than the stabilisation
map $s_\ast\colon H_\ast(G_{n-1})\to H_\ast(G_n)$ defined in the
introduction.
\end{example}

The aim of this section is to relate the complex $\B_n$ to the stabilisation
maps.  In order to do so, we introduce complexes $\S_n$ whose homology
quantifies the injectivity and surjectivity of the stabilisation
maps.

\begin{definition}[The complex $\S_n$]
For $n\geqslant 1$, let $\S_n$ denote the graded chain complex
defined as follows.  If $n\geqslant 2$, then $\S_n$ is the 
complex. 
\[\xymatrix@R=5pt{
	(\S_n)_0 && (\S_n)_1\ar[ll]_{d_1}	
	\\
	A_n && A_{n-1}\ar[ll]_{s}
}\]
concentrated in homological degrees $0$ and $1$.
And for $n=1$, $\S_1$ is the complex concentrated in homological
degree $0$, where it is given by the part of $A_1$ lying in positive
degrees, which we denote by $A_{1,>0}$.
\end{definition}

In the case where $A = \bigoplus_{p\geqslant 0}H_\ast(G_p)$
comes from a family of groups with multiplication $(G_n)_{n\geqslant 0}$,
the complex $\S_n$ for $n\geqslant 2$ is simply
\[
	H_\ast(G_n)\xleftarrow{\ \quad s_\ast\quad\ } H_\ast(G_{n-1}),
\]
so that injectivity and surjectivity of the stabilisation map
$s_\ast$ in certain ranges of degrees 
can be expressed as the vanishing of the homology of $\S_n$ in certain
ranges of bidegrees.
All of our results on the stabilisation map are proved from this point
of view.  

Our aim now is to relate the stabilisation maps, \emph{via} the complexes
$\S_n$, to the complex $\B_n$.  We do this using the following filtration.

\begin{definition}\label{filtration-definition}
Given $n\geqslant 2$, define a filtration
\[
	F_0\subseteq F_1\subseteq\cdots\subseteq F_{n-1}=\B_n
\]
of $\B_n$ by defining $F_{n-1}=\B_n$, and by defining
$F_r$ for $r\leqslant (n-2)$ to be the subcomplex
of $\B_n$ spanned by summands of the form
$A_{n-s}\otimes -$ and $A_{1,0}\otimes A_{n-s-1}\otimes-$
for $s\leqslant r$.
As usual $A_{1,0}$ denotes the part of $A$ lying in bidegree $(1,0)$.
Here it is considered as a graded submodule of $A_1$.
\end{definition}

\begin{example}
Let us illustrate the above definition in the case $n=3$,
i.e.~for the filtration $F_0\subseteq F_1\subseteq F_2=\B_3$.
\[\xymatrix@!0@=50pt{
	{}
	&
	\ar@{.}[dl] \phantom{\scriptstyle A_2\otimes A_1}
	&
	{}
	\\
	\scriptstyle A_3
	&
	F_0
	&
	\ar@{.}[ul]\ar@{.}[dl]
	\phantom{\scriptstyle A_1\otimes A_1\otimes A_1}
	\\
	{}
	&
	\scriptstyle A_{1,0}\otimes A_2\ar[ul]
	&
	{}
}
\quad
\xymatrix@!0@=50pt{
	{}
	&
	\scriptstyle A_2\otimes A_1\ar[dl]
	&
	{}
	\\
	\scriptstyle A_3
	&
	F_1
	&
	\scriptstyle A_{1,0}\otimes A_1\otimes A_1
	\ar[ul]\ar[dl]
	\\
	{}
	&
	\scriptstyle A_{1,0}\otimes A_2\ar[ul]
	&
	{}
}
\quad
\xymatrix@!0@=50pt{
	{}
	&
	\scriptstyle A_2\otimes A_1\ar[dl]
	&
	{}
	\\
	\scriptstyle A_3
	&
	F_2
	&
	\scriptstyle A_1\otimes A_1\otimes A_1
	\ar[ul]\ar[dl]
	\\
	{}
	&
	\scriptstyle A_1\otimes A_2\ar[ul]
	&
	{}
}\]
\end{example}

\begin{example}
\label{filtration-example}
In the case $n=4$, we can depict $\B_4$ as follows.
\[\xymatrix@!0@C=80pt@R=80pt{
	&
 	\scriptstyle A_3\otimes A_1\ar[dl]
	&
	\scriptstyle A_2\otimes A_1\otimes A_1
	\ar[dl]\ar[l]
	\\
	\scriptstyle A_4
	&
	\scriptstyle A_2\otimes A_2\ar[l]
	& 
	\scriptstyle A_{1}\otimes A_2\otimes A_1
	\ar[ul]|\hole\ar[dl]|\hole
	&
	\scriptstyle 
	A_{1}\otimes A_1\otimes A_1\otimes A_1
	\ar[ul]\ar[l]\ar[dl]
	\\
	&
	\scriptstyle A_{1}\otimes A_3\ar[ul]
	&
	\scriptstyle A_{1}\otimes A_1\otimes A_2\ar[ul]\ar[l]
}\]
Then we can depict the filtration
\[
	F_0\subseteq F_1\subseteq F_2\subseteq F_3=\B_4
\]
symbolically in the form
\[
\vcenter{
\xymatrix@=15pt{
	&
 	\cdot\ar@{..}[dl]
	&
	\cdot \ar@{..}[dl]\ar@{..}[l]
	\\
	\bullet
	&
	\cdot\ar@{..}[l]
	& 
	\cdot
	\ar@{..}[ul]|\hole\ar@{..}[dl]|\hole
	&
	\cdot	
	\ar@{..}[ul]\ar@{..}[l]\ar@{..}[dl]
	\\
	&
	\circ\ar[ul]
	&
	\cdot\ar@{..}[ul]\ar@{..}[l]
}}
\ \subseteq\ 
\vcenter{
\xymatrix@=15pt{
	&
 	\bullet\ar[dl]
	&
	\cdot \ar@{..}[dl]\ar@{..}[l]
	\\
	\bullet
	&
	\cdot\ar@{..}[l]
	& 
	\circ
	\ar[ul]|\hole\ar[dl]|\hole
	&
	\cdot	
	\ar@{..}[ul]\ar@{..}[l]\ar@{..}[dl]
	\\
	&
	\circ\ar[ul]
	&
	\cdot\ar@{..}[ul]\ar@{..}[l]
}}
\ \subseteq\  
\vcenter{
\xymatrix@=15pt{
	&
 	\bullet\ar[dl]
	&
	\bullet \ar[dl]\ar[l]
	\\
	\bullet
	&
	\bullet\ar[l]
	& 
	\circ
	\ar[ul]|\hole\ar[dl]|\hole
	&
	\circ	
	\ar[ul]\ar[l]\ar[dl]
	\\
	&
	\circ\ar[ul]
	&
	\circ\ar[ul]\ar[l]
}}
\ \subseteq\ 
\vcenter{
\xymatrix@=15pt{
	&
 	\bullet\ar[dl]
	&
	\bullet \ar[dl]\ar[l]
	\\
	\bullet
	&
	\bullet\ar[l]
	& 
	\bullet
	\ar[ul]|\hole\ar[dl]|\hole
	&
	\bullet	
	\ar[ul]\ar[l]\ar[dl]
	\\
	&
	\bullet\ar[ul]
	&
	\bullet\ar[ul]\ar[l]
}}
\]
where a bullet $\bullet$ indicates that 
the relevant summand of $\B_4$ is included in that term of the filtration, 
a circle $\circ$ indicates a summand $A_1\otimes -$ of $\B_4$
that has been replaced by $A_{1,0}\otimes -$, and 
a dot $\cdot$ indicates an omitted summand.
\end{example}

The next proposition will describe the filtration quotients
of the filtration we have just defined.  In order to state it
we need the following definition.

\begin{definition}
Let $\mathcal{C}$ be a chain complex of graded $\F$-vector spaces
(such as $\B_n$ or $\S_n$).  The \emph{homological suspension} of 
$\mathcal{C}$, denoted $\Sigma_b\mathcal{C}$, is defined
to be the chain complex of graded $\F$-vector spaces
obtained by increasing the homological grading of each term by $1$.
In other words
\[
	(\Sigma_b\mathcal{C})_{b,d} = \mathcal{C}_{b-1,d}
\]
for $b,d\geqslant 0$.
\end{definition}

\begin{proposition}\label{quotient-isomorphism}
For $r\geqslant 1$ there is an isomorphism
\[
	F_r/F_{r-1}
	\cong
	\Sigma_b [\S_{n-r}\otimes \B_r],
\]
while
\[
	F_0\cong \S_n.
\]
\end{proposition}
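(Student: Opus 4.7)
The plan is to identify the summands of each filtration quotient explicitly and match them with the corresponding summands of $\Sigma_b[\S_{n-r}\otimes\B_r]$, then check that the differentials agree.

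First I would verify the identification $F_0\cong\S_n$. Unpacking the definition, the only summands of $\B_n$ meeting the $F_0$ description are $A_n$ (in homological degree $0$) and $A_{1,0}\otimes A_{n-1}$ (in homological degree $1$); any further tensor factors would have to have total charge $0$, hence would be empty. Since $A_{1,0}$ is spanned by $\sigma$, the map $\sigma\otimes a\mapsto a$ is an isomorphism $A_{1,0}\otimes A_{n-1}\xrightarrow{\cong} A_{n-1}$ under which the $\B_n$ differential restricts to $\sigma\otimes a\mapsto\sigma\cdot a=s(a)$, precisely the differential of $\S_n$.

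For $1\leqslant r\leqslant n-2$, I would first enumerate the summands of $\B_n$ lying in $F_r\setminus F_{r-1}$ in homological degree $b$. They fall into two types: (type $\alpha$) summands $A_{n-r}\otimes A_{q_1}\otimes\cdots\otimes A_{q_b}$ with each $q_i\geqslant 1$ and $\sum q_i=r$; and (type $\beta$) summands $A_{1,0}\otimes A_{n-r-1}\otimes A_{q_1}\otimes\cdots\otimes A_{q_{b-1}}$ with analogous conditions. Since $(\Sigma_b[\S_{n-r}\otimes\B_r])_b=(\S_{n-r})_0\otimes(\B_r)_{b-1}\oplus(\S_{n-r})_1\otimes(\B_r)_{b-2}$, the two pieces $A_{n-r}\otimes(\B_r)_{b-1}$ and $A_{n-r-1}\otimes(\B_r)_{b-2}$ are in evident bijection with types $\alpha$ and $\beta$; the isomorphism $\phi$ is the identity on type $\alpha$ and sends $a\otimes y\mapsto\sigma\otimes a\otimes y$ on type $\beta$. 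To check $\phi$ is a chain map I would trace the $\B_n$ differential case by case: on a type $\alpha$ element the $i=0$ product raises the leading charge from $n-r$ to $n-r+1$ and so lands in $F_{r-1}$, while the remaining $i\geqslant 1$ terms reassemble as $A_{n-r}\otimes d_{\B_r}(y)$, matching $1\otimes d_{\B_r}$; on a type $\beta$ element $\sigma\otimes a\otimes y$ the $i=0$ product yields $s(a)\otimes y$, corresponding to $d_{\S_{n-r}}(a)\otimes y$ on the tensor side, the $i=1$ product has leading part $\sigma\otimes(a\cdot y_1)=A_{1,0}\otimes A_{n-s-1}\otimes\cdots$ with $s<r$ and so vanishes in the quotient, and the $i\geqslant 2$ terms reassemble as $\sigma\otimes a\otimes d_{\B_r}(y)$ after re-indexing. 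Comparison with the tensor-and-suspension differential $d(a\otimes y)=d_{\S_{n-r}}(a)\otimes y\pm a\otimes d_{\B_r}(y)$ gives agreement. The main obstacle, or at any rate the step requiring the most care, is matching the Koszul signs with the alternating-sum definition of $d_{\B_n}$; any residual discrepancy can be absorbed by inserting a degree-dependent sign into $\phi$ on type $\beta$.

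Finally, for the endpoint $r=n-1$, where $F_{n-1}=\B_n$ is defined by fiat rather than by the formula, the quotient $F_{n-1}/F_{n-2}$ consists of the summands $A_{1,>0}\otimes A_{q_1}\otimes\cdots\otimes A_{q_b}$ with $\sum q_i=n-1$, namely the portion of each $A_1\otimes\cdots$ summand not captured by $A_{1,0}\otimes\cdots$. Since $\S_1=A_{1,>0}$ sits in homological degree $0$ alone, the evident map $x_0\otimes y\mapsto x_0\otimes y$ gives an isomorphism $\Sigma_b[\S_1\otimes\B_{n-1}]\to F_{n-1}/F_{n-2}$, and the differential check reduces to the type $\alpha$ argument from the previous paragraph, with no type $\beta$ contribution to worry about.
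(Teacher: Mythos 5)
Your strategy is essentially the paper's own: enumerate the summands of $F_r/F_{r-1}$ in each homological degree, match them bijectively with those of $\Sigma_b[\S_{n-r}\otimes\B_r]$ (the paper phrases your ``type $\beta$'' summands via an auxiliary complex $\bar\S_{n-r}$ with $(\bar\S_{n-r})_1=A_{1,0}\otimes A_{n-r-1}$), check the differential case by case discarding the faces that land in $F_{r-1}$, and treat $F_0$ and the endpoint $r=n-1$ separately. All of that is correct and matches the paper.

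The one step that fails as written is the sign bookkeeping. On a type $\alpha$ element $x_0\otimes y_1\otimes\cdots\otimes y_b$ the surviving faces are
\[
	\sum_{i\geqslant 1}(-1)^i\, x_0\otimes d^{\B_r}_{i-1}(y)\;=\;-\,x_0\otimes d_{\B_r}(y),
\]
not $+\,x_0\otimes d_{\B_r}(y)$ as you claim; similarly on type $\beta$ the index shift produces $+\,\sigma\otimes a\otimes d_{\B_r}(y)$ where the Koszul convention demands a minus. So the discrepancy with the differential of $\S_{n-r}\otimes\B_r$ sits on the $\mathrm{id}\otimes d_{\B_r}$ component in \emph{both} types, and in particular on components mapping type $\alpha$ summands to type $\alpha$ summands. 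It therefore cannot be absorbed by a sign inserted only on the type $\beta$ summands, as you propose: rescaling type $\beta$ leaves every type-$\alpha$-to-type-$\alpha$ component untouched. The correct repair, which is what the paper does (and illustrates in its $n=4$, $r=2$ example), is to multiply the identification by $(-1)^{b_2}$ on every summand coming from $(\S_{n-r})_{b_1}\otimes(\B_r)_{b_2}$ --- a sign depending on the homological degree in the $\B_r$ factor, applied to both types. With that modification your case-by-case verification goes through, including the $r=n-1$ case.
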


\begin{example}
Let us illustrate the result of  of Proposition~\ref{quotient-isomorphism}
in the case $n=4$ and $r=2$.  Following on from
Example~\ref{filtration-example}, we see that $F_2/F_1$
can be depicted like this:
\[\xymatrix@!0@C=60pt@R=60pt{
	&
 	\cdot\ar@{..}[dl]
	&
	\scriptstyle [A_2]\otimes [A_1\otimes A_1]
	\ar[dl]_{-}\ar@{..}[l]
	\\
	\cdot
	&
	\scriptstyle [A_2]\otimes [A_2]\ar@{..}[l]
	& 
	\ar@{..}[ul]|\hole\ar@{..}[dl]|\hole
	&
	\scriptstyle 
	[A_{1,0}\otimes A_1]\otimes [A_1\otimes A_1]
	\ar[ul]_+\ar@{..}[l]\ar[dl]^+
	\\
	&
	\cdot\ar@{..}[ul]
	&
	\scriptstyle [A_{1,0}\otimes A_1]\otimes [A_2]
	\ar[ul]^+\ar@{..}[l]
}\]
The signs on the arrows indicate whether the arrow
is the one obtained from the obvious multiplication map, 
or is the negative of that map.
Observing now that
\[
	\S_2 = (A_2\xleftarrow{\ s\ } A_1)
	\cong (A_2\longleftarrow A_{1,0}\otimes A_1)
\]
and that
\[
	\B_2 = (A_2\longleftarrow A_1\otimes A_1),
\]
where the unmarked arrows are obtained from multiplication maps,
we see that $F_2/F_1$ is isomorphic to the complex depicted as follows.
\[\xymatrix@!0@C=60pt@R=60pt{
	&
 	\cdot\ar@{..}[dl]
	&
	\scriptstyle (\S_2)_0\otimes (\B_2)_1
	\ar[dl]_{-}\ar@{..}[l]
	\\
	\cdot
	&
	\scriptstyle (\S_2)_0\otimes (\B_2)_0\ar@{..}[l]
	& 
	\ar@{..}[ul]|\hole\ar@{..}[dl]|\hole
	&
	\scriptstyle 
	(\S_2)_1\otimes (\B_2)_1
	\ar[ul]_+\ar@{..}[l]\ar[dl]^{+}
	\\
	&
	\cdot\ar@{..}[ul]
	&
	\scriptstyle (\S_2)_1\otimes (\B_2)_0
	\ar[ul]^{+}\ar@{..}[l]
}\]
The signs on the arrows now indicate whether the arrow
is equal to the tensor product of a differential from
$\S_2$ or $\B_2$ with an identity map, or to the negative of such.
On the other hand, $\Sigma_b[\S_2\otimes\B_2]$ is exactly the same,
but where now the signs
are governed by the Koszul sign convention.
\[\xymatrix@!0@C=60pt@R=60pt{
	&
 	\cdot\ar@{..}[dl]
	&
	\scriptstyle (\S_2)_0\otimes (\B_2)_1
	\ar[dl]_{+}\ar@{..}[l]
	\\
	\cdot
	&
	\scriptstyle (\S_2)_0\otimes (\B_2)_0\ar@{..}[l]
	& 
	\ar@{..}[ul]|\hole\ar@{..}[dl]|\hole
	&
	\scriptstyle 
	(\S_2)_1\otimes (\B_2)_1
	\ar[ul]_+\ar@{..}[l]\ar[dl]^{-}
	\\
	&
	\cdot\ar@{..}[ul]
	&
	\scriptstyle (\S_2)_1\otimes (\B_2)_0
	\ar[ul]^{+}\ar@{..}[l]
}\]
The last two complexes are isomorphic
\emph{via} the identity map on the summands $(\S_2)_0\otimes (\B_2)_0$
and $(\S_2)_1\otimes(\B_2)_0$, and \emph{via} the negative of the
identity map on the summands $(\S_2)_0\otimes(\B_2)_1$
and $(\S_2)_1\otimes(\B_2)_1$, as claimed in 
Proposition~\ref{quotient-isomorphism}.
\end{example}

\begin{proof}[Proof of Proposition~\ref{quotient-isomorphism}]
For the purposes of the proof, for $m\geqslant 1$ we define
a chain complex of graded $\F$-modules $\bar\S_m$ as follows.
For $m\geqslant 2$, $\bar\S_m$ is
\[\xymatrix@R=5pt{
	\bar\S_0 && \bar\S_1\ar[ll]_{d_1}	
	\\
	A_m && A_{1,0}\otimes A_{m-1}\ar[ll]_{s}
}\]
concentrated in homological degrees $0$ and $1$.
For $m=1$, we define $\bar\S_1$ to be the graded submodule
$A_{1,\geqslant 1}$ of $A_1$ consisting of the terms in positive degree.
Observe that $\bar\S_m$ is isomorphic to $\S_m$ \emph{via}
the identity map $A_m\to A_m$ in homological degree $0$,
and \emph{via} the isomorphism
\[
	A_{1,0}\otimes A_{m-1}\xrightarrow{\cong} A_{m-1},\qquad
	\sigma\otimes x\mapsto x
\]
in homological degree $1$.
We will prove the result with $\bar\S_m$ in place of $\S_m$.

We begin with the case $r\leqslant n-2$.
By definition, $(F_r/F_{r-1})_b$ is the direct sum of the terms
\[
	A_{q_0}\otimes\cdots\otimes A_{q_b}
\]
where $q_0+\cdots+q_b=n$,  $q_1,\ldots,q_b\geqslant 1$,
$q_0=n-r$, together with the terms 
\[
	A_{1,0}\otimes A_{q_1}\otimes \cdots\otimes A_{q_b}
\]
where $1+q_1+\cdots+q_b=n$, $q_1,\ldots,q_b\geqslant 1$, and $q_1=n-r-1$.
In other words, $(F_r/F_{r-1})_b$ is the direct sum of the terms 
\[
	A_{n-r}\otimes[A_{q_0}\otimes\cdots\otimes A_{q_{b-1}}]
\]
where $q_0+\cdots+q_{b-1}=r$,  $q_0,\ldots,q_{b-1}\geqslant 1$,
which is exactly $(\bar\S_{n-r})_0\otimes(\B_r)_{b-1}$,
together with the direct sum of the terms
\[
	[A_{1,0}\otimes A_{n-r-1}] \otimes 
		[A_{q_0}\otimes\cdots\otimes A_{q_{b-2}}]
\]
where $q_0+\cdots+q_{b-2}=r$, $q_0,\ldots,q_{b-2}\geqslant 1$,
which is exactly $(\bar\S_{n-r})_1\otimes(\B_r)_{b-2}$.
But that is exactly 
$(\bar\S_{n-r}\otimes\B_r)_{b-1}=(\Sigma[\bar\S_{n-r}\otimes\B_r])_b$.
Thus we may construct a degree-wise isomorphism
between $F_r/F_{r-1}$ and $\Sigma[\bar\S_{n-r}\otimes\B_r]$
by simply identifying corresponding direct summands.
However, the map constructed this way respects the
differential only up to sign.  To correct this, we map from
$\Sigma_b[\bar\S_{n-r}\otimes\B_r]$ to $F_r/F_{r-1}$ by taking 
$(-1)^{b_2}$ times the identity map on the summands coming from
$(\bar\S_{n-r})_{b_1-1}\otimes (\B_r)_{b_2}$.
One can now check that this gives the required isomorphism
of chain complexes.

The proof in the case $r=n-1$ is similar, and the details are left
to the reader.
\end{proof}

\section{Proof of Theorem~\ref{theorem-stability}}
\label{section-stability}

For the purposes of this section, we let $(G_p)_{p\geqslant 0}$
be a family of groups with multiplication satisfying the
hypotheses of Theorems~\ref{theorem-stability},
\ref{theorem-kernel} and~\ref{theorem-realisation},
and we define $A=\bigoplus_{n\geqslant 0}H_\ast(G_n)$.  
In this section we will prove the following.

\begin{theorem}\label{stability-inductive}
The complexes $\S_n$ for $n\geqslant 1$,
and $\B_n$ for $n\geqslant 2$,
are acyclic in the range $b\leqslant n-2d-1$.
\end{theorem}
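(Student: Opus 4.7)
The plan is a simultaneous induction on $n$ for both $\S_n$ and $\B_n$. The base cases $n=1$ (for $\S_n$) and $n=2$ (for both) reduce to a direct check that the stabilisation map $s$ is the identity on the unit classes in topological degree zero.

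For the inductive step, assuming the result for all $m < n$, the first ingredient is the filtration $F_0 = \S_n \subseteq \cdots \subseteq F_{n-1} = \B_n$ of Proposition~\ref{quotient-isomorphism}. Applying the Künneth formula over $\F$ to each quotient $F_r/F_{r-1} \cong \Sigma_b[\S_{n-r}\otimes\B_r]$ together with the inductive hypothesis, I would show that $H_{b,d}(F_r/F_{r-1}) = 0$ in the range $b + 2d \leq n-1$. The computation is tightest at $r=1$, where $\B_1 = A_1$ is concentrated in homological degree zero and the class $\sigma \in (A_1)_0$ forces us to use acyclicity of $\S_{n-1}$ on its full stability line. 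Iterating the long exact sequences produces a surjection $H_{b,d}(\S_n) \twoheadrightarrow H_{b,d}(\B_n)$ on $b+2d \leq n-1$ and an isomorphism on the smaller range $b+2d\leq n-2$; equivalently, the filtration spectral sequence collapses in this range and the possible kernel of $H(\S_n)\to H(\B_n)$ is concentrated on the single stability line.

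The second ingredient is the main spectral sequence of Theorem~\ref{spectral-sequence-theorem}, with $E^2 = H(\B_n)$ abutting to zero in the total-degree range $b+d\leq n-2$. The slice $(\B_n)_{\ast,0}$ is precisely the charge-$n$ piece of the two-sided bar construction on the polynomial subalgebra $\F[\sigma]\subseteq A$ generated by the fundamental class; since $\tor^{\F[\sigma]}(\F,\F)$ is supported in bar-degrees and charges at most $1$, this slice is acyclic for $n\geq 2$, covering the $d=0$ part of the range. For $d\geq 1$ with $b+2d\leq n-1$, the constraint forces $b+d\leq n-2$, so the abutment already vanishes.

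The main obstacle will be upgrading $E^\infty_{b,d}=0$ to $E^2_{b,d}=H_{b,d}(\B_n)=0$ along the stability line $b+2d=n-1$, because the differentials $d^r$ for $r\geq 2$ preserve ($r=2$) or raise ($r\geq 3$) the quantity $b+2d$, so a class in the range could in principle be cancelled only by an outgoing differential into a bidegree outside the range. My plan is to run a secondary induction on $d$ within the inductive step: once $H_{b',d'}(\B_n)=0$ is known for all $d'<d$ in the range, every incoming higher differential into $(b,d)$ vanishes on the $E^r$-page, and the filtration comparison (which places $H_{b,d}(\B_n)$ as a quotient of $H_{b,d}(\S_n)$ in filtration degree zero) together with the abutment condition should pin $E^2_{b,d}$ down to zero. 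Once $\B_n$ is acyclic throughout the range, the filtration comparison gives acyclicity of $\S_n$ for $b+2d\leq n-2$, and the remaining top line $b+2d=n-1$ is handled by analysing the kernel of the surjection from the filtration. I expect the technical heart of the argument to lie in synchronising the algebraic filtration on $\B_n$ with the semisimplicial filtration underlying the main spectral sequence well enough to carry the secondary induction through.
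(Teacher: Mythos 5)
Your overall architecture is close to the paper's: strong induction on $n$, the filtration of Proposition~\ref{quotient-isomorphism} analysed via K\"unneth, and the spectral sequence of Theorem~\ref{spectral-sequence-theorem} to deal with homological degrees $b\leqslant 1$. The $\B_n$ half of your plan is essentially sound, and in places slightly cleaner than the paper: the surjection $H_{b,d}(\S_n)\twoheadrightarrow H_{b,d}(\B_n)$ on $b+2d\leqslant n-1$ already kills $H_{b,d}(\B_n)$ for $b\geqslant 2$ because $\S_n$ is a two-term complex, your $\tor$-over-$\F[\sigma]$ observation disposes of the $d=0$ row directly, and for $b\leqslant 1$ outgoing differentials leave the first quadrant while incoming ones originate at $b'\geqslant 2$ inside the range, so your secondary induction is not even needed there.

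The genuine gap is the $\S_n$ half of the statement on the line $b+2d=n-1$, which you defer to ``analysing the kernel of the surjection from the filtration''. This is not a loose end: the induction needs $\S_n$ at full strength, since at the next stage the quotients $F_r/F_{r-1}\cong\Sigma_b[\S\otimes\B]$ are estimated using it, and losing the top line costs a half-step at every stage. Knowing that $H_{b,d}(\S_n)$ surjects onto $H_{b,d}(\B_n)=0$ says nothing about $H_{b,d}(\S_n)$ itself; what is needed is \emph{injectivity} of $H(\S_n)\to H(\B_n)$ on that line, and your uniform estimate $b+2d\leqslant n-1$ for the filtration quotients cannot deliver it: at the step $F_0\subset F_1$ the obstruction group $H_{b+1,d}(F_1/F_0)$ with $b+1=n-2d$ contains summands of the form $H_{0,d}(\S_{n-1})\otimes A_{1,0}$ or $H_{1,d}(\S_{n-1})\otimes A_{1,0}$, which lie one step above the inductive vanishing range for $\S_{n-1}$ and so need not vanish. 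The paper closes exactly this hole with two inputs absent from your outline: the sharper K\"unneth estimate that $F_r/F_{r-1}$ is acyclic for $b\leqslant n-2d$ when $r\geqslant 2$ (Lemma~\ref{FonetoBn}), and, for the critical inclusion $F_0\hookrightarrow F_1$, the comparison with the explicit subcomplex $\S q_n$ spanned by $A_n$, $A_{n-1}\otimes A_{1,0}$, $A_{1,0}\otimes A_{n-1}$ and $A_{1,0}\otimes A_{n-2}\otimes A_{1,0}$, on which every map is multiplication by $\sigma$ and the connecting homomorphism is visibly zero; this forces the connecting map of $0\to F_0\to F_1\to F_1/F_0\to 0$ to vanish in the borderline bidegrees and yields the needed injectivity. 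Your diagnosis that the remaining difficulty lies in ``synchronising'' the algebraic filtration with the semisimplicial filtration is misplaced: the two spectral sequences interact only through the already-established statement $H_{b,d}(\B_n)=0$, and no synchronisation is required; the missing idea is the borderline connecting-homomorphism argument just described.
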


Here and in what follows, the phrase ``in the range'' should
be understood to mean ``in the range of bidegrees $(b,d)$ for which''.
So for example,  the theorem states that for $n\geqslant 2$ the complexes
$\S_n$ and $\B_n$ are acyclic in all bidegrees $(b,d)$ for
which $b\leqslant n-2d-1$.

The theorem implies that the homology of $\S_n$ vanishes
in bidegrees $(0,d)$ for $d\leqslant \frac{n-1}{2}$,
and in bidegrees $(1,d)$ for $d\leqslant \frac{n-2}{2}$.
Unwinding the definition of $\S_n$ and $A$, 
we see that this states that $s_\ast\colon H_\ast(G_{n-1})\to H_\ast(G_n)$
is surjective in degrees $\ast\leqslant \frac{n-1}{2}$,
and injective in degrees $\ast\leqslant\frac{n-2}{2}$.
In other words, it exactly recovers the statement of 
Theorem~\ref{theorem-stability}.

Our proof of Theorem~\ref{stability-inductive} will be by
strong induction on $n$.  
The case $n=1$ simply states that the homology of $\S_1$ is 
concentrated in positive degrees, which holds by definition.
The case $n=2$ is immediately verified since it states that the 
maps $s_\ast\colon H_\ast(G_1)\to H_\ast(G_2)$
and $H_\ast(G_1)\otimes H_\ast(G_1)\to H_\ast(G_2)$
are isomorphisms in degree $\ast=0$.
For the rest of the section we will assume that 
Theorem~\ref{stability-inductive} holds for all integers
smaller than $n$, and will will prove that it holds for $n$.

\begin{lemma}
\label{FonetoBn}
Assume that Theorem~\ref{stability-inductive} holds for all
integers smaller than $n$.
Then the composite
\[
	F_1\hookrightarrow F_2\hookrightarrow
	\cdots
	\hookrightarrow F_{n-2}\hookrightarrow F_{n-1}=\B_n
\]
is a surjection on homology in the range $b\leqslant n-2d$
and an isomorphism in the range $b\leqslant n-2d-1$.
\end{lemma}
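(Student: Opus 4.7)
The plan is to analyze each inclusion $F_{r-1}\hookrightarrow F_r$ (for $r=2,\ldots,n-1$) via the long exact sequence in homology coming from the short exact sequence $0\to F_{r-1}\to F_r\to F_r/F_{r-1}\to 0$, and then compose the resulting maps. Proposition~\ref{quotient-isomorphism} identifies the quotient as $F_r/F_{r-1}\cong \Sigma_b[\S_{n-r}\otimes \B_r]$. Since $n-r<n$ and $r<n$, the inductive hypothesis applies to both tensor factors: any nonzero class in $H_{b_1,d_1}(\S_{n-r})$ satisfies $b_1+2d_1\geqslant n-r$, and any nonzero class in $H_{b_2,d_2}(\B_r)$ satisfies $b_2+2d_2\geqslant r$. (When $r=n-1$ the first factor is $\S_1 = A_{1,>0}$, which is concentrated in homological degree $0$; its nonzero classes have $b_1=0$ and $d_1\geqslant 1$ and so also satisfy $b_1+2d_1\geqslant 1=n-r$.) Because $\F$ is a field, the K\"unneth theorem then forces $H_{b,d}(\S_{n-r}\otimes\B_r)$ to vanish unless $b+2d\geqslant n$.

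Applying the homological suspension, $H_{b,d}(F_r/F_{r-1})$ vanishes unless $b-1+2d\geqslant n$, i.e.\ unless $b\geqslant n-2d+1$. Hence $H_{b,d}(F_r/F_{r-1})=0$ throughout the range $b\leqslant n-2d$, while $H_{b+1,d}(F_r/F_{r-1})=0$ throughout the smaller range $b\leqslant n-2d-1$. The long exact sequence of the pair then implies that each inclusion $F_{r-1}\hookrightarrow F_r$ induces a surjection on homology in bidegrees $b\leqslant n-2d$ and an isomorphism in bidegrees $b\leqslant n-2d-1$. Composing the $n-2$ inclusions $F_1\hookrightarrow F_2\hookrightarrow\cdots\hookrightarrow F_{n-1}=\B_n$, both properties are preserved, yielding the lemma.

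I expect this to be a clean bookkeeping argument with no serious obstacle. The main points of care are confirming the indexing in the boundary case $r=n-1$, where $\S_1$ has the slightly nonstandard definition above, and keeping track of the shift of one in homological degree introduced by the suspension $\Sigma_b$. The real content of the argument is packaged into Proposition~\ref{quotient-isomorphism}: it is that identification of filtration quotients that converts the filtration $\{F_r\}$ into tensor products of precisely the complexes $\S$ and $\B$ to which the inductive hypothesis applies, after which the homological estimates reduce to adding two inequalities.
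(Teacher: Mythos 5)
Your argument is correct and is essentially the paper's own proof: identify each quotient $F_r/F_{r-1}$ via Proposition~\ref{quotient-isomorphism}, apply the inductive acyclicity bounds to $\S_{n-r}$ and $\B_r$, add the two inequalities using the K\"unneth theorem, account for the suspension shift, and feed the resulting vanishing into the long exact sequences of the inclusions. The extra care you take with the boundary case $\S_1$ is consistent with (and already covered by) the inductive hypothesis, so there is nothing to add.
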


\begin{proof}
For $r$ in the range $n-1\geqslant r\geqslant 2$, 
the inductive hypothesis tells 
us that $\S_{n-r}$ and $\B_r$ are acyclic 
in the ranges $b\leqslant (n-r)-2d-1$ and $b\leqslant r-2d-1$
respectively. 
Consequently $\S_{n-r}\otimes\B_r$ is acyclic in the range
$b\leqslant n-2d-1$, so that $F_r/F_{r-1}\cong \Sigma_b(\S_{n-r}\otimes\B_r)$
is acyclic in the range $b\leqslant n-2d$.
It follows that $F_{r-1}\to F_r$ is a surjection on homology
in the range $b\leqslant n-2d$ and an isomorphism in the range
$b\leqslant n-2d-1$.

(The estimate for the acyclic range of
$\S_{n-r}\otimes\B_r$ is seen as follows.
The K\"unneth Theorem tells us that
the homology of $\S_{n-r}\otimes\B_r$ is the tensor product
of the homologies of $\S_{n-r}$ and $\B_r$.
Nonzero elements $x$ and $y$ of these respective homologies
must lie in bidegrees $(b_1,d_1)$ and $(b_2,d_2)$
satisfying $b_1\geqslant (n-r)-2d_1$ and $b_2\geqslant r-2d_2$,
so that $x\otimes y$ lies in bidegree $(b_1+b_2,d_1+d_2)$
satisfying $(b_1+b_2)\geqslant n -2(d_1+d_2)$, so that
$\S_{n-r}\otimes\B_r$ is acyclic in the range $b\leqslant n-2d-1$,
as claimed.)
\end{proof}

\begin{lemma}
The inclusion $F_0\hookrightarrow F_1$ is 
an isomorphism in homology
in the range $b\leqslant n-2d-1$.
\end{lemma}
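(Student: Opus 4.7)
The plan is to deduce the statement from the long exact sequence associated to the short exact sequence $0 \to F_0 \to F_1 \to F_1/F_0 \to 0$. Proposition~\ref{quotient-isomorphism} identifies $F_1/F_0 \cong \Sigma_b[\S_{n-1} \otimes \B_1]$, and since $\B_1$ equals $A_1$ concentrated in homological degree~$0$, the K\"unneth formula together with the inductive hypothesis (that $\S_{n-1}$ is acyclic in the range $b \leqslant n-2d-2$) shows that $F_1/F_0$ is acyclic in the range $b \leqslant n-2d-1$. Feeding this into the long exact sequence immediately yields surjectivity of $H_b(F_0) \to H_b(F_1)$ throughout $b \leqslant n-2d-1$, and injectivity throughout $b \leqslant n-2d-2$.

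The main obstacle is injectivity at the boundary $b = n-2d-1$, for which I must show that the connecting map $\partial\colon H_{n-2d}(F_1/F_0)_d \to H_{n-2d-1}(F_0)_d$ vanishes. Since $\S_{n-1}$ is concentrated in homological degrees $0$ and $1$, the source $H_{n-2d}(F_1/F_0)_d$ can be nonzero only in two cases: when $n-2d-1 = 0$ (so $n$ is odd, $d = (n-1)/2$, $b = 0$), or when $n-2d-1 = 1$ (so $n$ is even, $d = (n-2)/2$, $b = 1$). In both cases, the inductive hypothesis forces the K\"unneth sum to collapse to a single summand involving only the topological-degree-zero part $A_{1,0}$ of $A_1$, and this is precisely what makes an explicit computation of $\partial$ possible.

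In the first case, the potentially nonzero classes are identified with elements of $H_0(\S_{n-1})_d \otimes A_{1,0}$ and lift to $(F_1)_1$ as tensors $x \otimes \sigma$ with $x \in A_{n-1,d}$. Applying the $F_1$-differential gives $x \cdot \sigma = \sigma \cdot x = s(x)$, using graded commutativity and the fact that $\sigma$ has topological degree~$0$; this lies in the image of $s$ and so represents zero in $H_0(\S_n) = H_0(F_0)$. In the second case, the classes lift to tensors $\sigma \otimes x \otimes \sigma$ with $x \in A_{n-2,d}$ and $s(x) = 0$, and both terms of the $F_1$-differential $s(x)\otimes\sigma - \sigma\otimes(x\cdot\sigma)$ vanish: the first because $s(x) = 0$, and the second because $x\cdot\sigma = \sigma\cdot x = s(x) = 0$ by graded commutativity. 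Hence the connecting class is trivial in both cases, so injectivity extends all the way to $b = n - 2d - 1$, completing the argument.
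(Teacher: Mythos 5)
Your proof is correct, but it implements the key step differently from the paper. Both arguments start from the long exact sequence of $0\to F_0\to F_1\to F_1/F_0\to 0$ and the acyclicity of $F_1/F_0\cong\Sigma_b[\S_{n-1}\otimes\B_1]$ in the range $b\leqslant n-2d-1$, which immediately gives surjectivity there and injectivity for $b\leqslant n-2d-2$; the whole content is in killing the connecting homomorphism at the edge $b=n-2d-1$. The paper does this by introducing the auxiliary ``square'' subcomplex $\S q_n\subseteq F_1$ (spanned by $A_n$, $A_{n-1}\otimes A_{1,0}$, $A_{1,0}\otimes A_{n-1}$, $A_{1,0}\otimes A_{n-2}\otimes A_{1,0}$), comparing its short exact sequence with the one for $F_0\subset F_1$, noting that the connecting map for $\S q_n$ is visibly zero (every map there is multiplication by $\sigma$), and then using acyclicity of the cokernel $\Sigma[\S_{n-1}\otimes H_{\ast\geqslant 1}(G_1)]$ to see that $H_\ast(\bar F_1/\bar F_0)\to H_\ast(F_1/F_0)$ is onto in a range, hence the connecting map for the bottom row vanishes in the range $b\leqslant n-2d+1$. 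You instead observe that $F_1/F_0$ lives in homological degrees $1$ and $2$, so the only critical bidegrees are $b=0,1$ at the edge, and there you use the inductive hypothesis to reduce every class to a representative of the form $x\otimes\sigma$, respectively $\sigma\otimes x\otimes\sigma$ with $s(x)=0$, and compute $\partial$ explicitly: in the first case the image $x\cdot\sigma=\sigma\cdot x$ is a boundary in $F_0$, in the second the lift is already a cycle. (For $b=0$ this reduction does use, as you implicitly do, that the Künneth summands with $d_2\geqslant 1$ can be removed by adding boundaries $d(\sigma\otimes y\otimes z)$, using surjectivity of $s$ in those degrees; for $b=1$ the cycles literally equal $\ker(s)\otimes A_{1,0}$, so no modification is needed.) Your explicit-representative computation is essentially a hands-on version of what the square complex accomplishes abstractly; the paper's formulation has the mild advantage of giving vanishing of the connecting map in the larger range $b\leqslant n-2d$ (hence short exact sequences there), which your minimal computation does not address but which is not needed for the lemma.
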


\begin{proof}
Consider the chain complex corresponding to the square
\[\xymatrix@!0@=60pt{
	{}
	&
	A_{n-1}\otimes A_{1,0}\ar[dl]
	&
	{}
	\\
	A_n
	&
	{}
	&
	A_{1,0}\otimes A_{n-2}\otimes A_{1,0}
	\ar[ul]\ar[dl]
	\\
	{}
	&
	A_{1,0}\otimes A_{n-1}\ar[ul]
	&
	{}
}\]
in which the arrows are induced by the multiplication maps
of $A$.
This is a subcomplex $\S q_n$ of $\B_n$, and indeed of $F_1$.
Restricting the filtration $F_0\subset F_1$ of $F_1$ to $\S q_n$
gives a filtration $\bar F_0\subset \bar F_1$ of $\S q_n$ for which
$\bar F_0 = F_0$.  There results a
commutative diagram with short exact rows and
left column an isomorphism.
\[\xymatrix{
	0\ar[r]
	&
	\bar F_0\ar[r]\ar[d]_{\cong}
	&
	\S q_n\ar[r]\ar[d]
	&
	\bar F_1/\bar F_0\ar[r]\ar[d]
	&
	0
	\\
	0\ar[r]
	&
	F_0\ar[r]
	&
	F_1\ar[r]
	&
	F_1/F_0\ar[r]
	&
	0
}\]
The right-hand vertical map is an injection with cokernel
\[
	\Sigma[\S_{n-1}\otimes H_{\ast\geqslant 1}(G_1)].
\]
Since $\S_{n-1}$ is acyclic in the range $b\leqslant (n-1)-2d-1$,
this cokernel is acyclic in the range $b\leqslant [(n-1)-2(d-1)-1]+1=n-2d+1$,
so that the right-hand map in the diagram is a surjection
in homology in the same range.
The connecting homomorphism for the top row is
zero, since $\S q_n$ is isomorphic to the chain complex obtained from the square
\[\xymatrix@!0@=60pt{
	{}
	&
	A_{n-1}\ar[dl]
	&
	{}
	\\
	A_n
	&
	{}
	&
	A_{n-2}
	\ar[ul]\ar[dl]
	\\
	{}
	&
	A_{n-1}\ar[ul]
	&
	{}
}\]
in which each map is multiplication by $\sigma\in A_{1,0}$,
where triviality of the connecting
homomorphism is evident.
The connecting homomorphism for the bottom
sequence is therefore zero in the range (of bidegrees for its domain)
$b\leqslant n-2d+1$.  It follows that in the range $b\leqslant n-2d$
we have short exact sequences 
\[
	0\to H_\ast(F_0)\to H_\ast(F_1)\to H_\ast(F_1/F_0)\to 0.
\]
In the smaller range $b\leqslant n-2d-1$ the third term vanishes,
so that $H_\ast(F_0)\to H_\ast(F_1)$ is an isomorphism as claimed.
\end{proof}

We can now complete the proof of Theorem~\ref{stability-inductive}.
It follows from the last two lemmas
that in the range $b\leqslant n-2d-1$ the 
inclusion $\S_{n}=F_0\hookrightarrow \B_n$ is an isomorphism in homology.
The homology of $\S_n$ is concentrated in the range $b\leqslant 1$,
so that the homology of $\B_n$ vanishes in the range
$2\leqslant b\leqslant n-2d-1$.
It remains to prove that $H_\ast(\S_n)=H_\ast(\B_n)=0$
in the range where $b\leqslant n-2d-1$ and $b\leqslant 1$ both hold.

In order to proceed we use the spectral sequence of
Theorem~\ref{spectral-sequence-theorem},
which has $H_\ast(\B_n)=E^2_{\ast,\ast}$.  
No nonzero differentials $d^r$, $r\geqslant 2$, 
of the spectral sequence affect terms
in the range $b\leqslant n-2d-1$, $b\leqslant 1$.  
This is because any differential with source in this range 
has target outside the first quadrant.  And any differential 
$d^r$ with target in this range has source $E^r_{b+r,d-r+1}$, where
\[
	b+r\leqslant n-2d-1+r \leqslant n-2(d-r+1)-1,
\]
so that $E^r_{b+r,d-r+1}=0$.
Thus $H_\ast(\S_n)=H_\ast(\B_n)=E^\infty_{\ast,\ast}$
in the range $b\leqslant n-2d-1$, $b\leqslant 1$.
Recall that $E^\infty_{\ast,\ast}=0$
in the range $d\leqslant n-2-b$.  
Now 
for $n\geqslant 3$ and $b=0,1$ we have
\[d\leqslant \frac{n-b-1}{2}\implies d\leqslant n-2-b.\]
(The case $d\geqslant 1$ must be treated separately from the case
$d=0$, which is vacuous.)
Thus $H_\ast(\S_n)=H_\ast(\B_n)=E^\infty_{\ast,\ast}=0$
as required.

\section{Proof of Theorem~\ref{theorem-realisation}}
\label{section-realisation}

For the purposes of this section, we let $(G_p)_{p\geqslant 0}$
be a family of groups with multiplication satisfying the
hypotheses of Theorems~\ref{theorem-stability},
\ref{theorem-kernel} and~\ref{theorem-realisation},
and we define $A=\bigoplus_{n\geqslant 0}H_\ast(G_n)$.  
In this section 
we will prove Theorem~\ref{theorem-realisation}, essentially
by extracting a little extra data from
the proof of Theorem~\ref{theorem-stability}, and then exploiting
a cheap trick.
Throughout the section we will write $A_{i,j}$ for the part of $A$
with charge $i$ and topological degree $j$.  In other words,
$A_{i,j}=(A_i)_j$.

\begin{lemma}\label{twomplusone-acyclicity}
For $m\geqslant 1$, the graded chain complex $\B_{2m+1}$
is acyclic in the range $3\leqslant b\leqslant (2m+1)-2d$.
\end{lemma}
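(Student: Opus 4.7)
The plan is to deduce the lemma from the work already done in Section~\ref{section-stability}, simply by combining Lemma~\ref{FonetoBn} with a direct inspection of $F_0$ and $F_1/F_0$ at high homological degrees: both of these complexes happen to be concentrated in homological degrees $\leqslant 2$, so beyond that range their contribution to the filtration vanishes for trivial reasons.

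First I would note that $\B_1 = A_1$ is concentrated in homological degree $0$, and that $\S_{n-1}$ is concentrated in homological degrees $0$ and $1$ by definition. Proposition~\ref{quotient-isomorphism} identifies $F_1/F_0\cong\Sigma_b[\S_{n-1}\otimes\B_1]$, so $F_1/F_0$ is concentrated in homological degrees $1$ and $2$, and in particular $H_b(F_1/F_0)=0$ for every $b\geqslant 3$. Since $F_0\cong\S_n$ is also concentrated in homological degrees $0$ and $1$, we have $H_b(F_0)=0$ for every $b\geqslant 2$. The long exact sequence of the pair $(F_1,F_0)$ then immediately yields $H_b(F_1)=0$ for all $b\geqslant 3$.

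Finally I would invoke Lemma~\ref{FonetoBn}, which asserts that $F_1\hookrightarrow\B_n$ induces a surjection on homology in the range $b\leqslant n-2d$. Combined with the vanishing of $H_b(F_1)$ for $b\geqslant 3$ just established, this yields $H_b(\B_n)_{(b,d)}=0$ for all bidegrees with $3\leqslant b\leqslant n-2d$, which is exactly the claim of the lemma. There is no real obstacle here: the lower bound $b\geqslant 3$ in the statement is calibrated precisely to avoid the degrees in which $F_0$ and $F_1/F_0$ carry homology, so once those degrees are excluded the conclusion follows from Lemma~\ref{FonetoBn} alone. Notice also that the parity of $n$ plays no role in this argument, so the restriction to $n=2m+1$ in the statement appears to be tailored to the applications rather than reflecting a genuine limitation of the method.
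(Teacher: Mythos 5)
Your proposal is correct and follows essentially the same route as the paper: the paper's proof also combines Lemma~\ref{FonetoBn} with the observation that $F_1$ has no homology in degrees $b\geqslant 3$, the only (cosmetic) difference being that the paper notes directly that $F_1$ is concentrated in homological degrees $0,1,2$, whereas you deduce the same vanishing from $F_0\cong\S_n$ and $F_1/F_0\cong\Sigma_b[\S_{n-1}\otimes\B_1]$ via the long exact sequence. Your closing remark is also accurate: the argument does not use the parity of $n$.
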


\begin{proof}
Lemma~\ref{FonetoBn} shows that the inclusion
$F_1\hookrightarrow \B_n$ is a surjection on homology
in the range $b\leqslant n-2d$.
However, $F_1$ is concentrated in homological degrees
$b=0,1,2$, and so is acyclic in the range $b\geqslant 3$.
Combining the two facts gives the result.
\end{proof}

\begin{lemma}
\label{onemlemma}
In the spectral sequence of 
Theorem~\ref{spectral-sequence-theorem}, for $n=2m+1$,
there are no differentials affecting the term
in bidegree $(1,m)$ from the $E^2$ page onwards.
\end{lemma}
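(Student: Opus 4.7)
The plan is to rule out incoming and outgoing differentials on the $E^r$-page at bidegree $(1,m)$ separately, using only the acyclicity statement of Lemma~\ref{twomplusone-acyclicity} together with the standard shape of a first-quadrant homological spectral sequence. Recall that the spectral sequence of Theorem~\ref{spectral-sequence-theorem} is obtained from the augmented semisimplicial space $t\B_n$ in the way described in Proposition~\ref{spectral-sequence-proposition}, so it is a first-quadrant homological spectral sequence in which the bidegree $(b,d)$ of $\B_n$ matches the indexing $(s,t)$ of the spectral sequence, and the $r$-th page differential has the form
\[
d^r\colon E^r_{b,d}\longrightarrow E^r_{b-r,d+r-1}.
\]

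First I would dispose of the outgoing differentials. For $r\geqslant 2$ the target of $d^r\colon E^r_{1,m}\to E^r_{1-r,m+r-1}$ has first coordinate $1-r\leqslant -1$, hence lies outside the first quadrant and is zero. Thus no outgoing differential from $E^r_{1,m}$ can be nonzero on any page $r\geqslant 2$.

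The substantive step is to bound the incoming differentials $d^r\colon E^r_{1+r,m-r+1}\to E^r_{1,m}$, for which it suffices to show that the source is already zero on the $E^2$-page. There are two cases. If $r\geqslant m+2$, then the source sits in topological degree $m-r+1<0$ and so vanishes by first-quadrantness. If $2\leqslant r\leqslant m+1$, then the source is a subquotient of $E^2_{1+r,m-r+1}=H_{1+r}(\B_{2m+1})$ in topological degree $m-r+1$, and I would apply Lemma~\ref{twomplusone-acyclicity}, which asserts acyclicity of $\B_{2m+1}$ in the range $3\leqslant b\leqslant(2m+1)-2d$. Substituting $b=1+r$ and $d=m-r+1$, the lower bound $3\leqslant 1+r$ and the upper bound $1+r\leqslant(2m+1)-2(m-r+1)=2r-1$ both reduce to $r\geqslant 2$, which holds by assumption, so the source vanishes.

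Combining the two cases rules out every incoming and outgoing differential affecting $E^r_{1,m}$ for $r\geqslant 2$. There is no real obstacle in this argument: the genuine content lies in Lemma~\ref{twomplusone-acyclicity}, which was extracted from the filtration analysis of section~\ref{section-filtration}; here one only has to perform the bookkeeping of degrees and verify that the two inequalities defining the acyclic range both hold precisely from $r=2$ onward.
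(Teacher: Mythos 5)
Your proof is correct and follows essentially the same route as the paper: outgoing differentials from $(1,m)$ die for first-quadrant reasons, and incoming differentials $d^r$ have source in bidegree $(1+r,m-r+1)$, which lies in the acyclic range $3\leqslant b\leqslant(2m+1)-2d$ of Lemma~\ref{twomplusone-acyclicity} precisely when $r\geqslant 2$. The only (harmless) difference is that you split off the case $r\geqslant m+2$ of negative topological degree explicitly, which the paper absorbs into the same inequality.
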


\begin{proof}
Certainly there are no such differentials with
source in this bidegree, since the spectral sequence
is concentrated in the first quadrant.
Since $E^1=\B_{2m+1}$, Lemma~\ref{twomplusone-acyclicity}
shows that $E^2$ vanishes in the range
\[
	3\leqslant b\leqslant (2m+1)-2d.
\]
If $r\geqslant 2$, then
any differential $d^r$ with target in bidegree $(1,m)$
has source in bidegree $(b,d)=(1+r,m-r+1)$,
so that 
\[
	b=(2m+1)-2d - [r-2] \leqslant (2m+1)-2d,
\]
and consequently the source term vanishes.
\end{proof}

\begin{lemma}\label{twomplusoneacyclic-lemma}
Let $m\geqslant 2$.  
Then the complex $\B_{2m+1}$ is acyclic in bidegree $(1,m)$.
\end{lemma}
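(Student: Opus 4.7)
The plan is to read off the claim directly from the spectral sequence of Theorem~\ref{spectral-sequence-theorem} together with Lemma~\ref{onemlemma}. Recall that for this spectral sequence we have $E^1=\B_{2m+1}$ with $d^1=d_b$, so the $E^2$-term in bidegree $(1,m)$ is precisely the homology group $H_1(\B_{2m+1})$ in topological degree $m$, which is what we want to show vanishes.

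First, Lemma~\ref{onemlemma} tells us that no differentials on pages $r\geqslant 2$ enter or leave bidegree $(1,m)$, so $E^2_{1,m}=E^\infty_{1,m}$. It therefore suffices to argue that $E^\infty_{1,m}=0$. For this I would invoke the convergence statement of Theorem~\ref{spectral-sequence-theorem}: for $n=2m+1$, the $E^\infty$-term vanishes in bidegrees $(b,d)$ satisfying $b+d\leqslant n-2=2m-1$. At $(b,d)=(1,m)$ this inequality reads $m+1\leqslant 2m-1$, i.e.\ $m\geqslant 2$, which is exactly the hypothesis of the lemma.

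Combining these two observations yields $H_1(\B_{2m+1})_{\text{topological}=m}=E^2_{1,m}=E^\infty_{1,m}=0$, as required. There is no genuine obstacle here: the entire content has been isolated in Lemma~\ref{onemlemma} (which rules out late differentials through bidegree $(1,m)$) and in the vanishing range of the spectral sequence, and the only thing to check is the elementary arithmetic inequality $m+1\leqslant 2m-1$, which is precisely why the hypothesis $m\geqslant 2$ (rather than $m\geqslant 1$) appears in the statement.
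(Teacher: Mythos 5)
Your proof is correct and is essentially identical to the paper's own argument: both deduce $E^2_{1,m}=E^\infty_{1,m}$ from Lemma~\ref{onemlemma} and then use the vanishing of $E^\infty$ in total degrees $\leqslant (2m+1)-2$, where $m\geqslant 2$ gives $1+m\leqslant 2m-1$. Nothing to add.
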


\begin{proof}
In the spectral sequence of Theorem~\ref{spectral-sequence-theorem}
for $n=2m+1$, we know that $E^2_{1,m}=E^\infty_{1,m}$ 
by Lemma~\ref{onemlemma}, and that $E^\infty_{1,m}=0$
since $m\geqslant 2$ guarantees that $1+m\leqslant (2m+1)-2$.
So $E^2_{1,m}=0$, but this is simply the homology of $\B_{2m+1}$
in bidegree $(1,m)$.
\end{proof}

\begin{lemma}\label{twomacyclic-lemma}
Let $m\geqslant 2$.  Then $\B_{2m}$ is acyclic in bidegree $(0,m)$.
\end{lemma}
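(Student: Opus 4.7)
The plan is to run the spectral sequence of Theorem~\ref{spectral-sequence-theorem} at $n=2m$ and target bidegree $(0,m)$ on the $E^2$-page, closely mirroring the proof of Lemma~\ref{twomplusoneacyclic-lemma}. The convergence statement immediately yields $E^\infty_{0,m}=0$, since the condition $0+m\leqslant 2m-2$ holds precisely when $m\geqslant 2$. Any differential leaving $(0,m)$ targets a bidegree with negative horizontal coordinate and so vanishes trivially. It therefore suffices to show that each incoming differential $d^r\colon E^r_{r,m-r+1}\to E^r_{0,m}$ vanishes for $r\geqslant 2$, which I aim to do by forcing the source term to be zero.

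For $r\geqslant 3$ I would first record the direct analog of Lemma~\ref{twomplusone-acyclicity} for $n=2m$: by Lemma~\ref{FonetoBn} the inclusion $F_1\hookrightarrow \B_{2m}$ is a homology surjection in the range $b\leqslant 2m-2d$, and $F_1$ is supported in homological degrees $b\leqslant 2$, so $\B_{2m}$ is acyclic in the range $3\leqslant b\leqslant 2m-2d$. The bidegree $(r,m-r+1)$ satisfies $r\geqslant 3$ and $r\leqslant 2r-2=2m-2(m-r+1)$ for all $r\geqslant 3$, so $E^2_{r,m-r+1}=0$ and hence $E^r_{r,m-r+1}=0$ in these cases.

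The hard part will be the remaining case $r=2$, where the source $(2,m-1)$ sits in the one-degree gap between the acyclic range $b\leqslant 2m-2d-1=1$ coming from Theorem~\ref{stability-inductive} and the range $b\geqslant 3$ established above. A dedicated argument is needed, and I expect this is exactly where the ``cheap trick'' advertised at the start of the section is deployed. My approach would be to describe the cycles in bidegree $(2,m-1)$ as explicitly as possible: by Lemma~\ref{FonetoBn} any class in $H_{2,m-1}(\B_{2m})$ is represented in $F_1$, and $(F_1)_3=0$ together with the fact that $(F_1)_2$ is the single summand $A_{1,0}\otimes A_{2m-2}\otimes A_1$ makes such cycles very constrained (the relevant cycle condition being $d_2(\sigma\otimes x\otimes y)=\sigma x\otimes y-\sigma\otimes xy=0$). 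I would then try to verify, using the explicit description of the $d^2$-differential coming from the augmented semisimplicial space $t\B_{2m}$ in Proposition~\ref{spectral-sequence-proposition}, that such cycles map to zero in $E^2_{0,m}$, for instance by exhibiting a bounding chain at the level of classifying spaces. I anticipate this verification to be the most delicate step of the proof.
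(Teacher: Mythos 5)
There is a genuine gap, and it sits exactly where you flag it: the case $r=2$. Your reduction is fine as far as it goes — $E^\infty_{0,m}=0$ for $m\geqslant 2$, and the analogue of Lemma~\ref{twomplusone-acyclicity} for $n=2m$ (via Lemma~\ref{FonetoBn} and the fact that $F_1$ is supported in $b\leqslant 2$) kills the sources $E^2_{r,m-r+1}$ for $r\geqslant 3$. But precisely because of those two facts, one has $E^2_{0,m}=\mathrm{image}\bigl(d^2\colon E^2_{2,m-1}\to E^2_{0,m}\bigr)$, so ``showing that $d^2$ vanishes'' is not a remaining verification step of the argument: it \emph{is} the lemma, restated. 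The source $E^2_{2,m-1}=H_{2,m-1}(\B_{2m})$ lies in the gap between the two acyclicity ranges and there is no reason for it to vanish, and the differential $d^2$ is not determined by the chain complex $\B_{2m}$ alone — it depends on the topology of the semisimplicial space $t\B_{2m}$, so the proposed ``exhibit a bounding chain at the level of classifying spaces'' would amount to proving, for essentially arbitrary classes, that certain elements of $H_m(G_{2m})$ are decomposable, which is again the content of the statement. In short, the plan defers the entire content of the lemma to an unproven and unsketched step, and I do not see how to carry it out along these lines.

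The paper's route (the ``cheap trick'') is different and avoids the $n=2m$ spectral sequence altogether: it uses the already-established algebraic fact $H_{1,m}(\B_{2m+1})=0$ (Lemma~\ref{twomplusoneacyclic-lemma}). One considers the composite of chain maps
\[
	\Sigma_b A_{2m}\xrightarrow{\ \theta\ }\B_{2m+1}
	\xrightarrow{\ \phi\ }\Sigma_b\B_{2m}\otimes\B_1
	\xrightarrow{\ \psi\ }\Sigma_b\B_{2m},
\]
where $\theta(x)=x\otimes\sigma-\sigma\otimes x$ (a cycle since $x\cdot\sigma=\sigma\cdot x$), $\phi$ is projection onto the summands of the form $-\otimes A_1$, and $\psi$ projects $A_1$ onto $A_{1,0}$. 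In bidegree $(1,m)$ the composite is, on one hand, zero on homology because it factors through $H_{1,m}(\B_{2m+1})=0$; on the other hand it is the suspension of the inclusion $A_{2m}=(\B_{2m})_0\to\B_{2m}$, which surjects onto $H_{0,m}(\B_{2m})$. Hence $H_{0,m}(\B_{2m})=0$. If you want to salvage your approach, the honest fix is to replace your $r=2$ step by this transfer of vanishing from $\B_{2m+1}$ to $\B_{2m}$, rather than trying to analyse $d^2$ for $n=2m$ directly.
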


\begin{proof}
Consider the following composite.
\[
	\Sigma_b A_{2m}
	\xrightarrow{\ \theta\ }
	\B_{2m+1}
	\xrightarrow{\ \phi\ }
	\Sigma_b \B_{2m}\otimes \B_1
	\xrightarrow{\ \psi\ }
	\Sigma_b\B_{2m}
\]
Here $\theta$ is the map that sends $x\in A_{2m}$
to the element $x\otimes \sigma- \sigma\otimes x\in(\B_{2m+1})_{1}$.
To check that $\theta$ is a chain map, we need only check that the differential
vanishes on its image, which holds because
\[
	d(x\otimes \sigma - \sigma\otimes x)=
	x\cdot \sigma - \sigma\cdot x =0.
\]
Next, 
$\Sigma_b(\B_{2m}\otimes\B_1)$ can be identified with
the submodule of $\B_{2m+1}$ consisting of summands of the form
$-\otimes A_1$, and $\phi$ is the projection onto these summands.
It is a chain map.
Finally, $\psi$ is the map that
projects $\B_1=A_1$ onto its degree $0$ part $A_{1,0}\cong\F$.
In homology in bidegree $(1,m)$ this map is zero since it factors through
the homology of $\B_{2m+1}$, which vanishes in that bidegree.
On the other hand, in this bidegree the composite is simply the suspension
of the map $A_{2m}\to \B_{2m}$, which is a surjection in homological
degree $b=0$.  
It follows that the target of this map, 
which is the homology of $\B_{2m}$ in bidegree
$(0,m)$, is zero.
\end{proof}

\begin{proof}[Proof of Theorem~\ref{theorem-realisation}]
We have seen that $\B_{2m}$ is acyclic in bidegree $(0,m)$.
This means that the map
\[
	\bigoplus_{\substack{p+q=2m \\ p,q\geqslant 1}}
	\bigoplus_{\substack{p'+q'=m \\ p',q'\geqslant 0}}
	A_{p,p'}\otimes A_{q,q'}
	\longrightarrow
	A_{2m,m}
\]
is surjective.  Now, suppose that $p,q,p',q'$ are as in the summation above,
with $p'\leqslant\frac{p-1}{2}$.  Then we have  
the commutative diagram 
\[\xymatrix{
	A_{p,p'}\otimes A_{q,q'}
	\ar[r]
	&
	A_{2m,m}
	\\
	A_{p-1,p'}\otimes A_{q,q'}
	\ar[u]^{s\otimes\mathrm{id}}
	\ar[r]
	&
	A_{2m-1,m}
	\ar[u]_{s}
} \]
in which the left-hand map is surjective by
Theorem~\ref{theorem-stability},
so that the image of
$A_{p,p'}\otimes A_{q,q'}$ is contained in the image of
$s$.  Similarly, if $q'\leqslant\frac{q-1}{2}$, then
the image of $A_{p,p'}\otimes A_{q,q'}$ is contained
in the image of $s$.
The only summands to which these observations do not apply
are those indexed by $p,q,p',q'$ as in the summation,
satisfying also that 
\[
	p'>\frac{p-1}{2},\qquad
	q'>\frac{q-1}{2}.
\]
Adding these inequalities shows that we have
\[
	m=p'+q'>m-1.
\]
Thus the only possibility it that $p'$ is greater than $\frac{p-1}{2}$
by exactly $1/2$, and similarly for $q'$.  In other words,
we must have $p=2p'$ and $q=2q'$.  So we have shown that the map
\[
	A_{2m-1,m}\oplus
	\bigoplus_{\substack{p'+q'=m \\ p',q'\geqslant 1}}
	A_{2p',p'}\otimes A_{2q',q'}
	\longrightarrow
	A_{2m,m}
\]
is surjective.  In the case $m=2$ this proves the claim,
and for $m>2$ the claim now follows by induction.
\end{proof}

\section{Proof of Theorem~\ref{theorem-kernel}}
\label{section-kernel}

For the purposes of this section, we let $(G_p)_{p\geqslant 0}$
be a family of groups with multiplication satisfying the
hypotheses of Theorems~\ref{theorem-stability},
\ref{theorem-kernel} and~\ref{theorem-realisation},
and we define $A=\bigoplus_{n\geqslant 0}H_\ast(G_n)$.  
The aim of this section is to prove
Theorem~\ref{theorem-kernel}, which is an immediate
consequence of Theorem~\ref{theorem-realisation} and the following.

The section will deal with complexes like $\B_n$ which have
a homological and topological grading.  Given such a complex
$\mathcal{C}$, we will write $H_{i,j}(\mathcal{C})$ 
for the part of $H_i(\mathcal{C})$
that lies in topological grading $j$, in other words $H_{i,j}(\mathcal{C})=
H_i(\mathcal{C})_j$.

\begin{theorem}\label{new-kernel-theorem}
Let $m\geqslant 1$.  Then the images of the maps
\begin{gather}
	\ker\Bigl[s_\ast\colon H_{m-1}(G_{2m-2})\to H_{m-1}(G_{2m-1})\Bigr]
	\otimes
	H_1(G_2)
	\longrightarrow
	\ker\Bigl[s_\ast\colon H_{m}(G_{2m})\to H_m(G_{2m+1})\Bigr]
	\label{surjective-map-one}
	\\
	H_{m-1}(G_{2m-2})
	\otimes
	\ker\Bigl[s_\ast\colon H_1(G_2)\to H_1(G_3)\Bigr]
	\longrightarrow
	\ker\Bigl[s_\ast\colon H_{m}(G_{2m})\to H_m(G_{2m+1})\Bigr]
	\label{surjective-map-two}
\end{gather}
together span 
$\ker\left[s_\ast\colon H_{m}(G_{2m})\to H_m(G_{2m+1})\right]$.
\end{theorem}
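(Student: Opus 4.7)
The strategy is to leverage the acyclicity of $\B_{2m+1}$ in bidegree $(1,m)$ from Lemma~\ref{twomplusoneacyclic-lemma}, combined with a chain-level analysis via the filtration $F_0\subset F_1\subset\cdots\subset F_{2m}=\B_{2m+1}$ of Definition~\ref{filtration-definition} and the quotient identification $F_r/F_{r-1}\cong\Sigma_b[\S_{2m+1-r}\otimes\B_r]$ from Proposition~\ref{quotient-isomorphism}. The starting point is that $H_{1,m}(F_0)=H_{1,m}(\S_{2m+1})$ is precisely $\ker[s_\ast\colon H_m(G_{2m})\to H_m(G_{2m+1})]$, so the task reduces to detecting this subgroup via the filtration.

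First I would compute the filtration quotients $H_{\ast,m}(F_r/F_{r-1})$ through the K\"unneth theorem applied to $\S_{2m+1-r}\otimes\B_r$ and the inductive acyclicity ranges from Theorem~\ref{stability-inductive}. The decisive vanishing $H_{1,m}(F_r/F_{r-1})=0$ for $r\geqslant 2$ follows because the nonvanishing thresholds $m_1\geqslant(2m+1-r)/2$ and $m_2\geqslant r/2$ force $m_1+m_2\geqslant m+\tfrac12$, contradicting $m_1+m_2=m$. The long exact sequences then yield surjections $H_{1,m}(F_1)\twoheadrightarrow\cdots\twoheadrightarrow H_{1,m}(\B_{2m+1})=0$, and a direct chain-level check that $\partial\colon H_{2,m}(F_1/F_0)\to H_{1,m}(F_0)$ vanishes on its generators $\sigma\otimes x\otimes\sigma$ (since $d(\sigma\otimes x\otimes\sigma)=\sigma x\otimes\sigma-\sigma\otimes x\sigma=0$ when $x\in\ker s_\ast$) shows that $H_{1,m}(F_0)\hookrightarrow H_{1,m}(F_1)$. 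Consequently, every class in $\ker(s_\ast)$ is detected as the $F_0$-component of the image of some connecting map $\partial\colon H_{2,m}(F_r/F_{r-1})\to H_{1,m}(F_{r-1})$ for $r\geqslant 2$.

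Next I would perform the chain-level identification of these connecting maps with $\mu_1$ and $\mu_2$. For $r=2$, the K\"unneth decomposition isolates a summand isomorphic to $\ker[s_\ast\colon H_{m-1}(G_{2m-2})\to H_{m-1}(G_{2m-1})]\otimes H_1(G_2)/s_\ast H_1(G_1)$, and the chain-level lift $\sigma\otimes b\otimes v\in F_2$ has differential $\sigma b\otimes v-\sigma\otimes bv=-\sigma\otimes bv$ (using $s_\ast(b)=0$), producing precisely the image of $\mu_1$ in $H_{1,m}(F_0)$; the quotient by $s_\ast H_1(G_1)$ is harmless because $b\cdot s_\ast(v')=s_\ast(b)\cdot v'=0$. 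For $r=3$, analogous analysis on representatives $x\otimes\sigma\otimes y$ with $y\in\ker[s_\ast\colon H_1(G_2)\to H_1(G_3)]$ gives $d(x\otimes\sigma\otimes y)=s(x)\otimes y$, and the further relation $d(\sigma\otimes x\otimes y)=s(x)\otimes y-\sigma\otimes xy$ in $F_2$ shows $[s(x)\otimes y]=[\sigma\otimes xy]$ in $H_{1,m}(F_2)$, whose pullback to $H_{1,m}(F_0)$ represents $[x\cdot y]$, matching $\mu_2$.

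The hard part will be ruling out residual contributions from $r\geqslant 4$: I expect that the K\"unneth computation of $H_{2,m}(F_r/F_{r-1})$ for larger $r$, together with the stability bounds from Theorem~\ref{theorem-stability}, will show that any resulting boundary factors through the $r\in\{2,3\}$ contributions already accounted for. Once this is settled, the injection $H_{1,m}(F_0)\hookrightarrow H_{1,m}(F_1)$ combined with the surjection onto $H_{1,m}(\B_{2m+1})=0$ forces $\ker(s_\ast)=\mathrm{image}(\mu_1)+\mathrm{image}(\mu_2)$, as required.
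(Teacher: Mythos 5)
Your overall strategy is the paper's own: filter $\B_{2m+1}$ as in Definition~\ref{filtration-definition}, use Proposition~\ref{quotient-isomorphism} and the vanishing $H_{1,m}(\B_{2m+1})=0$ of Lemma~\ref{twomplusoneacyclic-lemma}, and identify the boundary contributions from filtration levels $2$ and $3$ with the two product maps (the paper runs this through the spectral sequence of the filtration rather than unrolled long exact sequences, but that difference is cosmetic). However, your treatment of level $r=3$ has a genuine gap. The source group is $H_{2,m}(F_3/F_2)\cong H_{0,m-1}(\S_{2m-2})\otimes H_{1,1}(\B_3)$, and you only compute the connecting map on classes whose $\B_3$-component is $[\sigma\otimes y]$ with $y\in\ker[s_\ast\colon H_1(G_2)\to H_1(G_3)]$. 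These do not span $H_{1,1}(\B_3)$: a general class has a representative $(x\otimes\sigma-\sigma\otimes x)+q\otimes\sigma$ with $x\in H_1(G_2)$ \emph{arbitrary} and only $q$ constrained to lie in $\ker(s_\ast)$ (this is Lemma~\ref{Bthreelemma} in the paper; e.g.\ for $\GL_\bullet(\F_2)$ with $\F_2$-coefficients the classes $[t\otimes\sigma]$ and $[\sigma\otimes t]$ are independent). Since the theorem requires that \emph{every} element of $\ker(s_\ast)$ detected at level $3$ lies in the span of the images of \eqref{surjective-map-one} and \eqref{surjective-map-two}, you must control the boundary on all of $H_{2,m}(F_3/F_2)$, not on a subspace; the terms with $x\notin\ker(s_\ast)$ do not map into $F_0$ after one correction, and handling them is exactly the delicate part of the paper's argument: one first corrects the lift by $\sigma\otimes x\otimes y$-terms, finds a level-$1$ obstruction $\bigl[\sum x_\alpha y_\alpha+\sum p_\beta q_\beta\bigr]\otimes[\sigma]$, and only on its kernel (after a further correction by $\sigma\otimes w\otimes\sigma$) does the contribution to $H_{1,m}(\S_{2m+1})$ come out as $\bigl[\sum p_\beta q_\beta\bigr]$, i.e.\ in the image of \eqref{surjective-map-two} modulo the level-$2$ image. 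Without this step your argument only establishes the easy containment (that the two images land in the kernel and are hit), not the spanning statement.

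A second, smaller issue is the deferred claim for $r\geqslant 4$. The stability ranges of Theorem~\ref{stability-inductive} (equivalently Theorem~\ref{theorem-stability}) are \emph{not} enough: in the K\"unneth analysis the potentially nonzero factors sit exactly one step outside those ranges, namely $H_{1,m'}(\B_{2m'+1})$ and $H_{0,m'}(\B_{2m'})$ for $m'\geqslant 2$, and their vanishing is precisely the content of Lemmas~\ref{twomplusoneacyclic-lemma} and~\ref{twomacyclic-lemma}. With those two lemmas one gets $H_{2,m}(F_r/F_{r-1})=0$ for $r\geqslant 4$ outright (no ``factoring through'' the $r\in\{2,3\}$ contributions is needed), so this part of your plan is fixable, but as stated the expected route would not close.
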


The main ingredient in the proof of Theorem~\ref{new-kernel-theorem} 
is Lemma~\ref{twomplusoneacyclic-lemma}, 
which states that $H_{1,m}(\B_{2m+1})=0$ for $m\geqslant 1$,
and of which it is an entirely algebraic consequence.
However our argument is significantly more unpleasant than we would like.
Here is the general outline:
Theorem~\ref{new-kernel-theorem} is a statement about $H_{1,m}(\S_{2m+1})$,
which is by definition the kernel 
$\ker\left[s_\ast\colon H_{m}(G_{2m})\to H_m(G_{2m+1})\right]$.
We will use the filtration
\[
	\S_{2m+1}=F_0\subseteq F_1\subseteq\cdots\subseteq F_{2m}=\B_{2m+1}
\]
from Definition~\ref{filtration-definition} to get from
what we know about $H_{1,m}(\B_{2m+1})$ to what we need to know
about $H_{1,m}(\S_{2m+1})$.
We will do this by using the spectral sequence arising from the
filtration in topological degree $m$.
\[
	E^1_{i,j}=H_{i+j,m}(F_i/F_{i-1})
	\implies
	H_{i+j,m}(\B_{2m+1})	
\]
The point is to identify the differentials affecting the term
$E^1_{0,1}=H_{1,m}(\S_{2m+1})$ with the maps~\eqref{surjective-map-one}
and~\eqref{surjective-map-two}.

Let us begin the proof in detail.
We are interested in the values of $H_{r,m}(F_i/F_{i-1})$
in the cases $r=0,1,2$.  
Recall from Proposition~\ref{quotient-isomorphism}
that for $i\geqslant 1$ we have
\[
	F_i/F_{i-1}
	\cong
	\Sigma_b [\S_{2m+1-i}\otimes\B_i]
\]
so that 
\[
	H_{r,m}(F_i/F_{i-1})
	\cong
	H_{r-1,m}[\S_{2m+1-i}\otimes\B_i]
	\cong
	\bigoplus_{\substack{r_1+r_2=r-1\\ m_1+m_2=m}}
	H_{r_1,m_1}(\S_{2m+1-i})\otimes H_{r_2,m_2}(\B_i).
\]
We have the following.

\begin{lemma}
For $r=0,1,2$ and $i=0,\ldots,2m$,
the only nonzero groups $H_{r,m}(F_i/F_{i-1})$ are as follows.
\begin{align*}
	H_{1,m}(F_0) & \cong H_{1,m}(\S_{2m+1}) \\
	H_{2,m}(F_0) & \cong H_{2,m}(\S_{2m+1}) \\
	H_{1,m}(F_1/F_0) & \cong H_{0,m}(\S_{2m})\otimes H_{0,0}(\B_1)\\
	H_{2,m}(F_1/F_0) & \cong H_{1,m}(\S_{2m})\otimes H_{0,0}(\B_1)\\
	H_{2,m}(F_2/F_1) & \cong H_{1,m-1}(\S_{2m-1})\otimes H_{0,1}(\B_2)\\
	H_{2,m}(F_3/F_2) & \cong H_{0,m-1}(\S_{2m-2})\otimes H_{1,1}(\B_3)
\end{align*}
\end{lemma}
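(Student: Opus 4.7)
My plan is to combine Proposition~\ref{quotient-isomorphism} with the Künneth formula and the acyclicity results from Theorem~\ref{stability-inductive} and Lemmas~\ref{twomplusoneacyclic-lemma} and \ref{twomacyclic-lemma}. For $i = 0$ the identification $F_0 = \S_{2m+1}$ is immediate: the acyclic range forces $H_{0, m}(F_0) = 0$, and $H_{r, m}(F_0) \cong H_{r, m}(\S_{2m+1})$ holds tautologically for $r \geq 1$ (vanishing automatically at $r = 2$ since $\S_{2m+1}$ is concentrated in homological degrees $0$ and $1$). For $i \geq 1$, Proposition~\ref{quotient-isomorphism} and Künneth give
\[
H_{r, m}(F_i/F_{i-1}) \cong \bigoplus_{\substack{r_1 + r_2 = r - 1 \\ m_1 + m_2 = m}} H_{r_1, m_1}(\S_{2m+1-i}) \otimes H_{r_2, m_2}(\B_i),
\]
so the problem reduces to bookkeeping.

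For $r = 0$ and $i \geq 1$ the sum is empty and so vanishes. For $r = 1$ and $i \geq 2$, Theorem~\ref{stability-inductive} forces $r_1 + 2m_1 \geq 2m + 1 - i$ and $r_2 + 2m_2 \geq i$ simultaneously; adding and using $r_1 + r_2 = 0$, $m_1 + m_2 = m$ yields $2m \geq 2m + 1$, a contradiction. The remaining case $(r, i) = (1, 1)$ is handled directly: since $\B_1 = A_1$ is concentrated in homological degree zero, $r_2 = 0$, and the acyclic range of $\S_{2m}$ forces $m_1 = m$, $m_2 = 0$, producing $H_{1, m}(F_1/F_0) \cong H_{0, m}(\S_{2m}) \otimes H_{0, 0}(\B_1)$. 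An identical argument gives $H_{2, m}(F_1/F_0) \cong H_{1, m}(\S_{2m}) \otimes H_{0, 0}(\B_1)$.

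For $r = 2$ and $i \geq 2$, the two acyclic-range inequalities must both be equalities, uniquely determining the surviving quadruple up to the parity of $i$: for $i = 2k$ the sole candidate is $H_{1, m - k}(\S_{2m + 1 - 2k}) \otimes H_{0, k}(\B_{2k})$, and for $i = 2k + 1$ it is $H_{0, m - k}(\S_{2m - 2k}) \otimes H_{1, k}(\B_{2k+1})$. Substituting $i = 2$ and $i = 3$ recovers the last two identifications claimed in the statement.

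The substantive step, and the main obstacle, is to show that these surviving summands vanish once $i \geq 4$. This is where the strengthened acyclicity enters: for $i = 2k$ with $k \geq 2$ we have $H_{0, k}(\B_{2k}) = 0$ by Lemma~\ref{twomacyclic-lemma}, and for $i = 2k + 1$ with $k \geq 2$ we have $H_{1, k}(\B_{2k+1}) = 0$ by Lemma~\ref{twomplusoneacyclic-lemma}. In both cases the $\B$-factor is annihilated precisely when $i \geq 4$, so Lemmas~\ref{twomplusoneacyclic-lemma} and \ref{twomacyclic-lemma} kick in at exactly the threshold needed to truncate the filtration above $F_3$, completing the proof.
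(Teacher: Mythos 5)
Your proposal is correct and follows essentially the same route as the paper: Proposition~\ref{quotient-isomorphism} plus the K\"unneth formula, the acyclicity ranges of Theorem~\ref{stability-inductive} to pin down the only possible surviving bidegrees (with the parity dichotomy for $r=2$, $i\geqslant 2$), and Lemmas~\ref{twomplusoneacyclic-lemma} and~\ref{twomacyclic-lemma} to kill the candidates with $i\geqslant 4$. The only difference is organisational (you split by $r$ before $i$, the paper by $i$), and your observation that $H_{2,m}(F_0)$ actually vanishes is harmless since the lemma only asserts which groups can be nonzero.
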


\begin{proof}
{\bf Case $i=0$.}
In this case we have $H_{r,m}(F_0)=H_{r,m}(\S_{2m+1})$,
and by Theorem~\ref{stability-inductive} this
is nonzero only for $r\geqslant 1$.

{\bf Case $i=1$.}
In this case we have
\[
	H_{r,m}(F_1/F_0)
	\cong H_{r,m}(\Sigma_b[\S_{2m}\otimes\B_1])
	\cong H_{r-1,m}(\S_{2m}\otimes\B_1)
	\cong \bigoplus_{m_1+m_2=m} H_{r-1,m_1}(\S_{2m})\otimes H_{0,m_2}(\B_1)
\]
since $\B_1$ is concentrated in homological degree $b=0$.
Now by Theorem~\ref{stability-inductive} the term 
$H_{r-1,m_1}(\S_{2m})$ vanishes for
$m_1\leqslant m-r/2$.  So for $r=0$ we require $m_1>m$, 
which is impossible, and for $r=1,2$ the only possibility is $m_1=m$, $m_2=0$.
So  the possible terms are
\[
H_{1,m}(F_1/F_0)\cong H_{0,m}(\S_{2m})\otimes H_{0,0}(\B_1)
\]
and
\[
H_{2,m}(F_1/F_0)\cong H_{1,m}(\S_{2m})\otimes H_{0,0}(\B_1).
\]

{\bf Case $2\leqslant i\leqslant 2m$.}
In this case we have
\begin{align*}
	H_{r,m}(F_i/F_{i-1})
	&\cong H_{r,m}(\Sigma_b[\S_{2m+1-i}\otimes\B_i])
	\\
	&\cong H_{r-1,m}(\S_{2m+1-i}\otimes\B_i)
	\\	
	&\cong \bigoplus_{\substack{r_1+r_2=r-1\\m_1+m_2=m}}
	H_{r_1,m_1}(\S_{2m+1-i})\otimes H_{r_2,m_2}(\B_i).
\end{align*}
Now from Theorem~\ref{stability-inductive} we know that
$H_{r_2,m_2}(\B_i)=0$ for $r_2\leqslant 2i-2m_2-1$ 
while $H_{r_1,m_1}(\S_{2m+1-i})=0$ for $r_1\leqslant 2m+1-i-2m_1-1$
Thus a nonzero group appearing in the direct sum above must have 
\[
	r_1=2m-i-2m_1+\delta
	\text{ and }
	r_2=i-2m_2-1+\epsilon
\]
for $\delta,\epsilon>0$. 
Then the constraints $r_1+r_2=r-1$ and $m_1+m_2=m$ give us
$r=\delta+\epsilon$.  Thus, to find a nonzero group when
$i\geqslant 2$ and $r=0,1,2$, the only possibility is that
$r=2$ and $\delta=\epsilon=1$.  But then $(r_1,r_2)=(1,0)$ or
$(r_1,r_2)=(0,1)$, in which case we have two possible summands, 
only one of which is possible at a given time, namely
\[
	H_{2,m}(F_i/F_{i-1})
	=
	\left\{\begin{array}{ll}
	H_{0,m-(i-1)/2}(\S_{2m+1-i})\otimes H_{1,(i-1)/2}(\B_i)
	&
	\text{for }i\text{ odd,}
	\\
	H_{1,m-i/2}(\S_{2m+1-i})\otimes H_{0,i/2}(\B_i)
	&
	\text{for }i\text{ even.}
	\end{array}\right.
\]
However, Lemmas~\ref{twomplusoneacyclic-lemma} and~\ref{twomacyclic-lemma}
guarantee that the second factors vanish for $i\geqslant 4$.
Thus the only contributing factors are
\[
	H_{2,m}(F_3/F_2)
	=
	H_{0,m-1}(\S_{2m-2})\otimes H_{1,1}(\B_3)
\]
and
\[
	H_{2,m}(F_2/F_1)=H_{1,m-1}(\S_{2m})\otimes H_{0,1}(\B_2).
\]
This completes the proof.
\end{proof}

Thus the spectral sequence is as follows.
\[
\begin{tikzpicture}
  \matrix (m) [matrix of math nodes,
    nodes in empty cells,nodes={minimum width=5ex,
    minimum height=5ex,outer sep=-5pt},
    column sep=1ex,row sep=1ex]{
\scriptstyle H_{2,m}(\S_{2m+1}) &  \bullet   &   \bullet  & \bullet & \bullet \\
\scriptstyle H_{1,m}(\S_{2m+1}) & \scriptstyle H_{1,m}(\S_{2m})\otimes H_{0,0}(\B_1)   &  \bullet & \bullet & \\
0  & \scriptstyle H_{0,m}(\S_{2m})\otimes H_{0,0}(\B_1)& \scriptstyle  H_{1,m-1}(\S_{2m-1})\otimes H_{0,1}(\B_2)  &  \bullet& \bullet \\
0 & 0 & 0 & \scriptstyle H_{0,m-1}(\S_{2m-2})\otimes H_{1,1}(\B_3)& \bullet \\
0 & 0 & 0 & 0 & 0 \\
	};
  \draw[dotted] (m-1-1) -- (m-2-1);
  \draw[dotted] (m-2-1) -- (m-3-1);
  \draw[dotted] (m-3-1) -- (m-4-1);
  \draw[dotted] (m-4-1) -- (m-5-1);
  \draw[dotted] (m-3-1) -- (m-3-2);
  \draw[dotted] (m-3-2) -- (m-3-3);
  \draw[dotted] (m-3-3) -- (m-3-4);
  \draw[dotted] (m-3-4) -- (m-3-5);
\end{tikzpicture}
\]
We will now investigate the differentials
affecting $H_{1,m}(\S_{2m+1})$.
We will need the following preliminary result.

\begin{lemma}
\label{Bthreelemma}
An arbitrary element of $H_{1,1}(\B_3)$
has a representative of the form
\[
	(x\otimes \sigma - \sigma\otimes x) + q\otimes\sigma
\]
where $x\in A_{2,1}$ and $q\in\ker(s\colon A_{2,1}\to A_{3,1})$,
and $\sigma\in A_{1,0}$ is the stabilising element.
\end{lemma}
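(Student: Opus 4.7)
The plan is to compute $H_{1,1}(\B_3)$ directly by unpacking bidegrees. First I would enumerate the four summands contributing to $(\B_3)_{1,1}$. Using that $A_{p,0}=H_0(G_p)=\F\sigma^p$ is one-dimensional, a general element has the form
\[
\alpha = c\,\sigma^2\otimes z + x\otimes\sigma + \sigma\otimes y + z'\otimes c'\sigma^2,
\]
with $c,c'\in\F$, $x,y\in A_{2,1}$ and $z,z'\in A_{1,1}$.

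Next I would compute $d_2$ on the three summands of $(\B_3)_{2,1}$, one for each slot in which the single $A_{1,1}$-factor appears. Using graded commutativity $\sigma\cdot z=z\cdot\sigma=s(z)$, the three resulting boundaries in $(\B_3)_{1,1}$ are
\[
s(z)\otimes\sigma - z\otimes\sigma^2,\qquad s(z)\otimes\sigma - \sigma\otimes s(z),\qquad \sigma^2\otimes z - \sigma\otimes s(z).
\]
The first and third of these let me rewrite $c\,\sigma^2\otimes z$ as $c\,\sigma\otimes s(z)$ and $z'\otimes c'\sigma^2$ as $c'\,s(z')\otimes\sigma$ modulo boundaries, killing the two outer summands of $\alpha$ and absorbing their content into the middle two. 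Hence every class is represented by a cycle of the form $x\otimes\sigma + \sigma\otimes y$ (after renaming).

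To finish, the cycle condition $d_1(x\otimes\sigma+\sigma\otimes y)=s(x)+s(y)=0$ forces $q:=x+y$ to lie in $\ker(s\colon A_{2,1}\to A_{3,1})$. Setting $\tilde x:=-y$ and regrouping gives
\[
x\otimes\sigma+\sigma\otimes y = (\tilde x\otimes\sigma - \sigma\otimes\tilde x) + q\otimes\sigma,
\]
the desired form. There is no genuine obstacle here beyond the bookkeeping of the four-summand reduction; the key ingredients are the two boundary relations needed to eliminate the $A_{1,1}$-factors from the outer positions and the one-dimensionality of $A_{p,0}$ that makes these relations sufficient.
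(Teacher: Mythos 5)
Your proposal is correct and follows essentially the same route as the paper: enumerate the four summands of $(\B_3)_{1,1}$ using that $A_{1,0}$ and $A_{2,0}$ are spanned by $\sigma$ and $\sigma^2$, add boundaries from $(\B_3)_{2,1}$ to eliminate the terms involving $A_{1,1}$, and then use the cycle condition with the substitution $\tilde x=-y$, $q=x+y$ to reach the stated form. The only cosmetic difference is that the paper adds the single correcting boundary $d(k\otimes\sigma\otimes\sigma-\sigma\otimes\sigma\otimes m)$ in one step rather than listing the three generator boundaries separately.
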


\begin{proof}
$A_1$ and $A_2$ are concentrated in non-negative degrees,
and in degree $0$ they are spanned by $\sigma$ and $\sigma^2$
respectively.  Thus an arbitrary cycle of $\B_3$ in bidegree
$(1,1)$ has form $j\otimes\sigma + k\otimes\sigma^2 + \sigma\otimes l
+\sigma^2\otimes m$ for $j,l\in A_{2,1}$ and $k,m\in A_{1,1}$.
By adding $d(k\otimes\sigma\otimes\sigma -\sigma\otimes\sigma\otimes m)$,
we may assume that $k=m=0$, so that our cycle has the form
$j\otimes\sigma+\sigma\otimes l$.
This can be rewritten in the required form with $x=-l$
and $q=j+l$.
\end{proof}

\begin{lemma}
The span of the images of the differentials 
with target $H_{1,m}(\S_{2m+1})\cong
\ker(s\colon H_m(G_{2m})\to H_m(G_{2m+1})$ is precisely
the span of the maps \eqref{surjective-map-one}
and \eqref{surjective-map-two}.
\end{lemma}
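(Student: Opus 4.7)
The plan is to work within the spectral sequence of the filtration $F_\bullet$ from Definition~\ref{filtration-definition} in topological degree $m$, and to identify each differential hitting position $(0,1)$ with one of the two maps in the statement. By the preceding lemma, the only nonvanishing entries $E^1_{i,j}$ with $i+j=2$ in the relevant region are at $(1,1)$, $(2,0)$, $(3,-1)$, and in particular $E^1_{r,2-r}=H_{2,m}(F_r/F_{r-1})=0$ for $r\geqslant 4$. Together with the convergence $E^\infty_{0,1}=0$ from Lemma~\ref{twomplusoneacyclic-lemma}, this forces the lifts to $E^1_{0,1}=H_{1,m}(\S_{2m+1})$ of the images of $d^1$, $d^2$, $d^3$ to exhaust $H_{1,m}(\S_{2m+1})$, so the task reduces to computing these three differentials at the chain level.

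I would first check that $d^1\equiv 0$. A class $\alpha\otimes\sigma\in H_{1,m}(\S_{2m})\otimes H_{0,0}(\B_1)$, with $\alpha$ represented by $\sigma\otimes a$ where $s(a)=0$, lifts to $\sigma\otimes a\otimes\sigma\in(F_1)_2$, and the $\B_{2m+1}$-boundary is $s(a)\otimes\sigma-\sigma\otimes s(a)=0$. For $d^2$, a class in $E^2_{2,0}$ lifts to $\alpha\otimes\beta\in H_{1,m-1}(\S_{2m-1})\otimes H_{0,1}(\B_2)$ with $\alpha$ represented by $\sigma\otimes a$ ($a\in\ker s$) and $\beta$ by $b\in A_{2,1}$. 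Its natural lift $\sigma\otimes a\otimes b\in(F_2)_2$ has boundary $s(a)\otimes b-\sigma\otimes(a\cdot b)=-\sigma\otimes(a\cdot b)\in(F_0)_1$, which under $F_0\cong\S_{2m+1}$ corresponds to $\mp a\cdot b\in H_{1,m}(\S_{2m+1})$. Hence the lift of $\mathrm{image}(d^2)$ to $E^1_{0,1}$ equals (up to sign) the image of \eqref{surjective-map-one}.

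For $d^3$ I would invoke Lemma~\ref{Bthreelemma} to write $\beta\in H_{1,1}(\B_3)$ as $(x\otimes\sigma-\sigma\otimes x)+q\otimes\sigma$ with $q\in\ker s$. For $\alpha$ represented by $a\in A_{2m-2,m-1}$ the natural lift is
\[
z \;=\; a\otimes x\otimes\sigma-a\otimes\sigma\otimes x+a\otimes q\otimes\sigma \;\in\; (F_3)_2.
\]
A direct computation, using graded commutativity $\sigma\cdot x=x\cdot\sigma$ in $A_3$ and $s(q)=0$, yields $dz=(a\cdot x+a\cdot q)\otimes\sigma-s(a)\otimes x$, which lies in $F_2$ but not in $F_0$. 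To obtain a valid $E^3$-representative I would modify $z$ by an element of $(F_3)_2$ so that the corrected boundary sits in $F_0$ and represents the class $\pm\sigma\otimes(a\cdot q)\in(F_0)_1$, i.e.\ $\pm a\cdot q\in H_{1,m}(\S_{2m+1})$, matching the image of \eqref{surjective-map-two}. The unwanted $F_2$-component $-s(a)\otimes x$ is absorbable because $E^1_{2,-1}=0$, while the unwanted $F_1$-component $(a\cdot x)\otimes\sigma$ is precisely (up to sign) a chain already hit by $d^2$ applied to $[\sigma\otimes a]\otimes x$, so it becomes zero after passing through the quotient $E^2_{0,1}\twoheadrightarrow E^3_{0,1}$.

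The main obstacle is this last filtration bookkeeping in Step~3: producing the explicit $F_2$-correction to $z$, and verifying that the two spurious terms in $dz$ are killed in the passage $E^1\to E^3$ without adding new contributions to the image. The key input is Lemma~\ref{Bthreelemma}, whose special form of representative is designed precisely so that the $x$-contributions to $dz$ cancel against each other modulo $F_0$ (using commutativity), leaving only the $q$-contribution, which is the one producing the image of \eqref{surjective-map-two}.
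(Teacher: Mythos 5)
Your Cases 1 and 2 coincide with the paper's computation and are fine, and your overall strategy (the filtration spectral sequence in topological degree $m$, convergence forcing the differentials into $E_{0,1}$ to span, chain-level identification of $d^1,d^2,d^3$) is exactly the paper's. The problem is Case 3, which you yourself flag as ``the main obstacle'': as written it is a genuine gap, and the justification you sketch for it is not correct. The term $(a\cdot x)\otimes\sigma$ in $dz$ lies in $F_1$, not in $F_0$; its class in $E_{1,0}\cong H_{0,m}(\S_{2m})\otimes H_{0,0}(\B_1)$ is (together with the $a\cdot q$ contribution) the \emph{value} $d^2[l]$ on the source class, i.e.\ the obstruction to $d^3[l]$ being defined at all --- it is not an element of the target $E_{0,1}$, so it cannot ``become zero after passing through $E^2_{0,1}\twoheadrightarrow E^3_{0,1}$.'' Likewise ``$d^2$ applied to $[\sigma\otimes a]\otimes x$'' does not typecheck: $a$ represents a class in $H_{0,m-1}(\S_{2m-2})=\mathrm{coker}(s)$ and need not satisfy $s(a)=0$, so $\sigma\otimes a$ is in general not a cycle of $\S_{2m-1}$ and defines no class in $E^1_{2,0}$; and in any case the values of that $d^2$ live in $E_{0,1}$, not where $(a\cdot x)\otimes\sigma$ sits. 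Consequently you cannot always ``modify $z$ so that the corrected boundary sits in $F_0$'': that is possible precisely when $d^2[l]=0$, and this survival condition never appears in your argument.

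The missing bookkeeping is carried out explicitly in the paper, and it is short once you choose a better lift from the start: lift $l_2$ to $z'=z+\sigma\otimes a\otimes x$ (the extra term lies in $F_2$, so $z'$ is still a lift). Then $d(z')=(a\cdot x+a\cdot q)\otimes\sigma-\sigma\otimes(a\cdot x)$ lies in $F_1$, its class in $E_{1,0}$ is $[a\cdot x+a\cdot q]\otimes[\sigma]=d^2[l]$, and $d^3[l]$ is defined only when $a\cdot x+a\cdot q=\sigma\cdot w$ for some $w\in A_{2m-1,m}$; in that case replacing the lift by $z'-\sigma\otimes w\otimes\sigma$ pushes the boundary into $F_0$, where it equals $\sigma\otimes(a\cdot q)$, so $d^3[l]=[a\cdot q]$, the image of \eqref{surjective-map-two}, as you anticipated. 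Two further small points: you must run this with general linear combinations $\sum_\alpha x_\alpha\otimes(y_\alpha\otimes\sigma-\sigma\otimes y_\alpha)+\sum_\beta p_\beta\otimes(q_\beta\otimes\sigma)$ as in Lemma~\ref{Bthreelemma}, since the kernel of $d^2$ is not spanned by single-term classes (the survival condition couples the summands); and for the final statement you also need Case 2 to record that $d^1$ vanishes on $E^1_{2,0}$ (the boundary there already lies in $F_0$), which your computation does give.
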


\begin{proof}
There are just three positions in the spectral sequence
supporting differentials with the given target.
We will compute the differentials case by case.

{\bf Case 1: 
Differentials with domain
$H_{1,m}(\S_{2m})\otimes H_{0,0}(\B_1)$.}
An element $l$ of the domain 
can be represented by a cycle  $l_1=x\otimes\sigma$ in $\S_{2m}\otimes\B_1$,
where $x\in\ker(s\colon A_{2m,m}\to A_{2m+1,m})$ and $\sigma\in A_{1,0}$
is the stabilising element.  
Then under the isomorphism of Proposition~\ref{quotient-isomorphism},
$l_1$ corresponds to the element $l_2=\sigma\otimes x\otimes \sigma$
of $F_1/F_0$.  We lift this to the element
$l_3=\sigma\otimes x \otimes \sigma$ of $F_1$.
Then $d(l_3) = \sigma\cdot x \otimes \sigma - \sigma\otimes x\cdot \sigma=0$.
Thus all differentials $d^r$ vanish on $l$.  (In fact there is only
one possibility, $d^1$.)

{\bf Case 2: Differentials with domain
$H_{1,m-1}(\S_{2m-1})\otimes H_{0,1}(\B_2)$}.
An element $l$ of the domain can be represented by a linear combination
of cycles of the form $x\otimes y$ in $\S_{2m-1}\otimes\B_2$,
where $x\in\ker(s\colon A_{2m-2,m-1}\to A_{2m-1,m-1})$
and $y\in A_{2,1}$.  Let us assume without loss that $l$
is in fact represented by $l_1=x\otimes y$.
Then under the isomorphism of Proposition~\ref{quotient-isomorphism},
$l_1$ corresponds to the element $l_2=\sigma\otimes x\otimes y$
of $F_2/F_1$, which we lift to the element $l_3=\sigma\otimes x\otimes y$
of $F_2$.  Now $d(l_3) = \sigma\cdot x\otimes y - \sigma \otimes x\cdot y
=-\sigma\otimes x\cdot y$, which lies in $F_0$.  
Thus $d^1(l)=0$, while $d^2(l)$ is the class represented by 
$-\sigma\otimes x\cdot y$, which under the isomorphism
of Proposition~\ref{quotient-isomorphism} corresponds to the element
$-x\cdot y$ of $A_{2m,m} = H_m(G_{2m})$.  This is precisely the image
of $-x\cdot y$ under the map~\eqref{surjective-map-one} above.
Thus the image of $d^2$ is precisely the image of~\eqref{surjective-map-one}.

{\bf Case 3:
Differentials with domain $H_{0,m-1}(\S_{2m-2})\otimes H_{1,1}(\B_3)$.}
By Lemma~\ref{Bthreelemma}, an element $l$ of the domain has a representative of the form
\[
	l_1=\sum_\alpha x_\alpha
	\otimes(y_\alpha\otimes\sigma-\sigma\otimes y_\alpha)
	+\sum_\beta p_\beta\otimes(q_\beta\otimes\sigma)
\]
where $x_\alpha,p_\beta\in A_{2m-2,m-1}$, $y_\alpha\in A_{2,1}$
and $q_\beta\in \ker(s\colon A_{2,1}\to A_{3,1})$.
Under the isomorphism of Proposition~\ref{quotient-isomorphism},
$l_1$ corresponds to the element
\[
	l_2=\sum_\alpha ( x_\alpha
	\otimes y_\alpha\otimes\sigma- x_\alpha\otimes \sigma\otimes y_\alpha)
	+\sum_\beta p_\beta\otimes q_\beta\otimes\sigma
\]
of $F_3/F_2$.  We lift this to the element
\[
	l_3=\sum_\alpha ( x_\alpha
	\otimes y_\alpha\otimes\sigma- x_\alpha\otimes \sigma\otimes y_\alpha
	+\sigma\otimes x_\alpha\otimes y_\alpha)
	+\sum_\beta p_\beta\otimes q_\beta\otimes\sigma
\]
of $F_3$. (The apparently new terms lie in $F_2$.)
Then
\[
	d(l_3)=\sum_\alpha ( x_\alpha\cdot y_\alpha\otimes \sigma
	- \sigma\otimes x_\alpha\cdot y_\alpha)
	+\sum_\beta p_\beta\cdot q_\beta\otimes\sigma.
\]
This lies in $F_1$, so that $d^1(l)=0$, and its image in $F_1/F_0$ is
\[
	\sum_\alpha  x_\alpha\cdot y_\alpha\otimes \sigma
	+\sum_\beta p_\beta\cdot q_\beta\otimes\sigma
\]
so that applying the isomorphism of Proposition~\ref{quotient-isomorphism}
shows that
\[
	d^2(l) = \left[\sum_\alpha x_\alpha\cdot y_\alpha
	+ \sum_\beta p_\beta\cdot q_\beta\right] \otimes [\sigma]
	\in H_{0,m}(\S_{2m})\otimes H_{0,0}(\B_1).
\]
Thus $l$ lies in the kernel of $d^2$ if and only if
\[
	\left[\sum_\alpha x_\alpha\cdot y_\alpha
	+ \sum_\beta p_\beta\cdot q_\beta\right]
\]
is zero in $H_{0,m}(\S_{2m})$, or in other words if and only if there
is $w\in A_{2m-1,m}$ such that 
$\sum_\alpha x_\alpha\cdot y_\alpha
	+ \sum_\beta p_\beta\cdot q_\beta= \sigma\cdot w$.
In this case, we may again represent $l$ by $l_1$,
which again corresponds to the element $l_2$ of $F_3/F_2$,
but which we now lift to the element $l_3-\sigma\otimes w\otimes\sigma$
of $F_3$.  (The additional term lies in $F_1$.)
But then $d(l_3-\sigma\otimes w\otimes \sigma)$
is precisely the element
\[
	\sum_\beta \sigma\otimes p_\beta\cdot q_\beta
\]
of $F_0$.  Applying the isomorphism of Proposition~\ref{quotient-isomorphism},
we find that
\[
	d^3(l) = \left[\sum_\beta p_\beta\cdot q_\beta\right]
	\in H_{1,m}(\S_{2m+1}).
\]
But then it follows that the image of $d^3$ is precisely the span
of the map \eqref{surjective-map-two}.
\end{proof}

We may now complete the proof.  Since $H_{1,m}(\B_{2m+1})=0$,
it follows that the infinity-page of the spectral sequence
must vanish in total degree $1$.  So then in particular we must have
$E^\infty_{1,0}=0$, or in other words, the differentials with
target $H_{1,m}(\S_{2m+1})$ must span.  But we have identified the
(nonzero) differentials with the maps \eqref{surjective-map-one}
and \eqref{surjective-map-two}.  Thus it follows that together,
the images of these two maps must span.
This completes the proof of Theorem~\ref{new-kernel-theorem}.

\bibliographystyle{plain}
\bibliography{multiplicative}
\end{document}